\documentclass[11pt]{amsart}
\usepackage{amsthm,amsfonts,amssymb,amsmath,oldgerm}
\numberwithin{equation}{section}
\usepackage{fullpage}
\usepackage{mathtools}
\usepackage{color}
\usepackage{amsfonts}
\usepackage{calrsfs}
\usepackage{graphicx}

\usepackage{tikz}
\usepackage{tikz-cd}
\usepackage{amsmath}
\usepackage{amssymb}

\DeclareMathAlphabet{\pazocal}{OMS}{zplm}{m}{n}

\def\eps{\varepsilon }

\newcommand\R{\mathbb R}
\newcommand\C{\mathbb C}

\def\eps{\varepsilon}

\newcommand\br{\begin{remark}}
	\newcommand\er{\end{remark}}
\newcommand\bp{\begin{pmatrix}}
	\newcommand\ep{\end{pmatrix}}
\newcommand{\be}{\begin{equation}}
	\newcommand{\ee}{\end{equation}}
\newcommand\ba{\begin{equation}\begin{aligned}}
		\newcommand\ea{\end{aligned}\end{equation}}

\newcommand{\bap}{\begin{app}}
	\newcommand{\eap}{\end{app}}
\newcommand{\begs}{\begin{exams}}
	\newcommand{\eegs}{\end{exams}}
\newcommand{\beg}{\begin{example}}
	\newcommand{\eeg}{\end{exaplem}}
\newcommand{\bpr}{\begin{proposition}}
	\newcommand{\epr}{\end{proposition}}
\newcommand{\bt}{\begin{theorem}}
	\newcommand{\et}{\end{theorem}}
\newcommand{\bc}{\begin{corollary}}
	\newcommand{\ec}{\end{corollary}}
\newcommand{\bl}{\begin{lemma}}
	\newcommand{\el}{\end{lemma}}
\newcommand{\bd}{\begin{definition}}
	\newcommand{\ed}{\end{definition}}
\newcommand{\brs}{\begin{remarks}}
	\newcommand{\ers}{\end{remarks}}

\newcommand{\SAB}{S_{A,B}}
\newcommand{\CAB}{C_{A,B}}

\newcommand{\A }{\mathcal{A}}

\newcommand{\RR}{{\mathbb R}}

\newcommand{\NN}{{\mathbb N}}

\newcommand{\CC}{{\mathbb C}}

\newcommand{\tu}{{\widetilde{u}}}

\newcommand{\rmd}{{\mathrm{d}}}

\newcommand{\Ooor}{{\Omega_*\setminus\big(\{\mu_k:k=1,\cdots,m\}\cup\{\nu_\ell:\ell=1,\dots,p\}\big)}}

\newcommand{\Ker}{{\mathrm{Ker}}}

\newcommand{\rmb}{{\mathrm{b}}}

\newcommand{\Range}{{\rm Range }}

\newtheorem{theorem}{Theorem}[section]
\newtheorem{proposition}[theorem]{Proposition}
\newtheorem{corollary}[theorem]{Corollary}
\newtheorem{lemma}[theorem]{Lemma}

\theoremstyle{remark}
\newtheorem{remark}[theorem]{Remark}
\theoremstyle{definition}
\newtheorem{definition}[theorem]{Definition}

\newtheorem{example}[theorem]{Example}

\newcommand\cB{{\mathcal B}}

\newcommand\cC{{\mathcal C}}

\newcommand\cL{{\mathcal L}}

\newcommand\cT{{\mathcal T}}

\newcommand\bV{{\mathbb V}}
\newcommand\bW{{\mathbb W}}

\newcommand\bX{{\mathbb X}}
\newcommand\bY{{\mathbb Y}}

\newcommand{\tnu}{{\widetilde{\nu}}}

\newcommand{\dom}{\text{\rm{dom}}}

\newcommand{\beq}{\begin{equation}}
	\newcommand{\eeq}{\end{equation}}

\allowdisplaybreaks

\title{The Damped Waves Equation and generalized Cosine and Sine families on Banach spaces}

\author{ Yuri Latushkin}
\address{University of Missouri, Columbia, MO 65211}
\email{latushkiny@missouri.edu}
\thanks{\hspace*{-0.17in}\textbf{AMS MSC 2020 Mathematics Subject Classification:} 37L15, 47D06, 34G10\\\textbf{Keywords:} Damped waves Equations, Generalized Cosine and Sine families, integral representations. \\Y.L. was supported by the NSF grants DMS-2106157 and would like to thank the Courant Institute of Mathematical Sciences and especially Prof.\ Lai-Sang Young for the opportunity to visit CIMS.\\A. P. research  was partially supported under the Simons Foundation Grant nr. 524928.}
\author{Alin Pogan}
\address{Miami University, Oxford, OH 45056}
\email{pogana@miamioh.edu}
\begin{document}

\begin{abstract}
We study the abstract damped wave equation on a Banach space, allowing the damping coefficient to be unbounded. By recasting the equation as a first-order system and identifying conditions under which the associated block operator generates a $C_0$-group, we construct generalized cosine and sine families that represent mild and classical solutions, extending the classical undamped theory. We establish existence, uniqueness, regularity, invariant subspaces, growth rate, and trigonometric type identities for these families. Our setup applies to a broad class of damped wave, Klein--Gordon, and higher-order PDE examples, including cases where damping restores well-posedness that fails in the undamped equation. 
\end{abstract}

	\maketitle

	\vspace{0.3cm}
	\begin{minipage}[h]{0.48\textwidth}
		\begin{center}
			University of Missouri \\
			Department of Mathematics\\
			810 East Rollins Street\\ Columbia, MO 65211, USA
		\end{center}
	\end{minipage}
	\begin{minipage}[h]{0.48\textwidth}
		\begin{center}
			Miami University\\
			Department of Mathematics\\
			100 Bishop Circle\\
			Oxford, OH 45056, USA
		\end{center}
	\end{minipage}
	
	\vspace{0.3cm}

	
\section{Introduction }\label{s1}
In this paper we study the second order damped waves equation 
\begin{equation}\label{AB}
	\begin{cases}
		u''(t) + 2Bu'(t) = Au(t),\;t\in\RR,\\
		u(0) = x,\; u'(0) = y
	\end{cases}
\end{equation}
on a Banach space $\bX$. We are interested in finding large classes of linear unbounded operators $A$ and $B$ for which the initial value problem \eqref{AB} has mild solutions and study their properties. An interesting example we have in mind is 
\begin{equation}\label{AB-leitmotif}
	\begin{cases}
		\partial_t^2u + 2\partial_t\partial_\xi u+2b(x)\partial_tu = -\partial_\xi^{4} u-\partial_\xi^2 u + a(x)\,\partial_\xi u + V(\xi)u,\\
		u(\xi,0) = f(\xi),\quad \partial_tu(\xi,0) =g(\xi).
	\end{cases}
\end{equation}
Here $a(\cdot)$, $b(\cdot)$, $V(\cdot)$ are smooth functions. The functions $f,g$ are from a Sobolev space.

In the undamped case ($B=0$), equation \eqref{AB} is well-posed provided the linear operator $A$ generates a strongly continuous cosine family denoted $\{C_{A,0}(t)\}_{t\in\RR}$. If $\{S_{A,0}(t)\}_{t\in\RR}$ is the associated sine family defined by $S_{A,0}(t)x=\displaystyle\int_0^tC_{A,0}(s)x\rmd s$, $t\in\RR$, $x\in\bX$, then the initial value problem has a unique mild solution 
given by $u(t)=C_{A,0}(t)x+S_{A,0}(t)y$, $t\in\RR$, see, e.g., \cite[Corollary 3.14.8]{ABHN}. In this paper we construct two strongly continuous operators families, called generalized cosine and sine families, which give the mild solutions of \eqref{AB} in the general undamped case. Once we define the generalized cosine and sine operators families we study their properties, including finding precise conditions for differentiability, important invariant subspaces, generalized trigonometric identities and exponential growth.

There are two natural ways to define mild solutions of \eqref{AB}. The first, is to solve \eqref{AB} formally using the Laplace Transform on $[0,\infty)$ and $(-\infty,0]$, see Definition~\ref{Def-mild} below. The second is to formally integrate \eqref{AB} twice to obtain an integral equation, see Definition~\ref{Def-Int-Mild} below. In this paper we show that the two definitions are equivalent. We will use the option that uses the Laplace Transform, which allows us to introduce the generalized cosine and sine families earlier in the exposition. One inspiration for this approach is the general theory of strongly continuous cosine families $\{C_{A,0}(t)\}_{t\in\RR}$ whose generator is defined via the Laplace Transform. 

An important role in our analysis is played by the quadratic pencil $Q(\lambda):=\lambda^2 I+2\lambda B-A$, $\lambda\in\CC$. Also, we recall the notation
$\mathcal{C}_{-\nu}(\mathbb{R},\bX)
	= \big\{ f \in \mathcal{C}(\mathbb{R},\bX) : \sup_{t \in \mathbb{R}} e^{-\nu|t|}\|f(t)\| < \infty \big\}$. 
\begin{definition}\label{Def-mild}
	If $x\in\dom(B)$ and $y\in\bX$, then a function $u \in \mathcal{C}_{-\nu}(\mathbb{R},\bX)$ is called \emph{Laplace Transform mild solution} of \eqref{AB}
	provided $(\mathcal{L}u)(\lambda),\, (\mathcal{L}u(-\cdot))(\lambda) \in \dom(Q(\lambda))$ and there exists $\tnu>\nu$ such that
	\begin{equation}\label{Def-Lap-Mild}
		Q(\lambda)(\mathcal{L}u)(\lambda) = \lambda x + 2Bx + y,\; Q(-\lambda)(\mathcal{L}u(-\cdot))(\lambda) = \lambda x - 2Bx - y\;\mbox{for any}\;\lambda \in \mathbb{C}^+_{\tnu}. 
	\end{equation}
\end{definition}
\begin{definition}\label{Def-Int-Mild}
	If $x\in\dom(B)$ and $y\in\bX$, then a function $u \in \mathcal{C}(\mathbb{R}, \bX)$ is called an \emph{integral mild solution} provided	
	\begin{equation}\label{domain-cond}
		\int_0^t u(s)\,\rmd s \in \dom(B),\quad
		\int_0^t (t-s)u(s)\,\rmd s \in \dom(A)\;\mbox{for any}\; t\in\RR,
	\end{equation}
	\begin{equation}\label{cont-cond}
		t \mapsto B\int_0^t u(s)\,\rmd s, \quad
		t \mapsto A\int_0^t (t-s)u(s)\,\rmd s \;\in\; \mathcal{C}(\mathbb{R}, \bX),
	\end{equation}
	\begin{equation}\label{Int-Mild}
		u(t) + 2B\int_0^t u(s)\,\rmd s
		= A\int_0^t (t-s)u(s)\,\rmd s + x + t(y + 2Bx)\;\mbox{for any}\; t\in\RR.
	\end{equation}
\end{definition}
In Theorem~\ref{t4.8}, cf. also Remark~\ref{r4.9}, we show that these two definitions are equivalent. For this reason, we will refer in this introduction to both type of solutions simply as mild solutions.

Our goal is to prove the existence and uniqueness of classical and especially  mild solutions of \eqref{AB} under some general conditions on operators $A$ and $B$ that include many PDE examples. In particular we are interested in the case when the linear operator $B$ is \emph{unbounded}. Many linear models that can be written in the form \eqref{AB} are obtained by linearizing a nonlinear PDE along a smooth, localized pattern, see Remark~\ref{r8.10}. As a consequence the coefficients of all the differential operators are smooth functions. The literature discussing special cases or very specific models of \eqref{AB} with $B$ unbounded is quite vast, especially in the case when the space $\bX$ is a Hilbert space and one assumes $A$ be self-adjoint and positive definite, see e.g., \cite{Ar,BBD,CR,CT1,CT2,FZG,FST,IT1,IT2,ITY,XL1,XL2,XL3}. Another goal of our paper is to establish a general framework that applies to the case of damped waves equations in Banach spaces, where several Hilbert space tools are not readily available.

\noindent\textbf{Hypothesis (HAB).} Assume $A_0:\dom(A_0)\subseteq\bX\to\bX$, $B_0:\dom(B_0)\subseteq\bX\to\bX$ are
generators of $C_0$-groups on $\bX$, $W_0\in\mathcal{B}(\dom(A_0),\bX)$ $B_1,\widetilde{B}_1\in\mathcal{B}(\bX)$ are such that
\begin{itemize}
	\item[(i)] $A_0 B_0 = B_0 A_0$;
	\item[(ii)] $\dom(A_0) \subseteq \dom(B_0)$,
	$B_0(\dom(A_0^2)) \subseteq \dom(A_0)$;
	\item[(iii)] $A_0^\pm:= \pm A_0 - B_0$ generate $C_0$-groups on $\bX$;
	\item[(iv)] $B_1(\dom(A_0))\subseteq\dom(A_0)$ and $(B_1)\big|_{\dom(A_0)} \in \mathcal{B}(\dom(A_0))$;
	\item[(v)] $B_0 B_1 - B_1 B_0 = \widetilde{B}_1\,x$
	for any $x \in \dom(A_0^2)$;
	\item[(vi)] $A = A_0^2 - B_0^2 + W_0$, $B = B_0 + B_1$, $\dom(A)=\dom(A_0^2)$ and $\dom(B)=\dom(B_0)$.
\end{itemize}
Hypothesis (HAB)(ii) implies $\dom(A_0^2) \subseteq \dom(B_0^2)$. Hence, it is natural to consider $A$ as an unbounded linear operator on $\bX$ with domain $\dom(A) = \dom(A_0^2)$. The class of operators covered by Hypothesis (HAB) is very general. In our setup the linear operators $A$ and $B$ do not commute, only their ``leading order terms" $A_0$ and $B_0$ commute. Hypothesis (HAB) is especially amiable to multidimensional PDE models. 
Special cases of (HAB) are given in Section~\ref{s8}, see Hypothesis (H1) or (H2) or (H3). Each of them imply (HAB) cf. Lemma~\ref{l8.1}, and are easier to check.

Our hypotheses allow us to prove the existence and uniqueness of mild solutions of the damped wave \eqref{AB} even in a special case when $A$ does not generate a $C_0$-semigroup, see Example~\ref{e8.4} with $\gamma\in(-1,0)$. In such a case the undamped equation obtained by dropping the damping term $Bu'$ in \eqref{AB} is not well-posed, since $A$ cannot generate a $C_0$-cosine family. This shows that there are cases when the existence and uniqueness of mild solutions of \eqref{AB} is due to the presence of damping. 

A classical approach to solve the initial value problem \eqref{AB} is to rewrite it as a first order initial value problem. We set $v=u'+Bu$. Then \eqref{AB} is equivalent to
\begin{equation}\label{ABG}
	\begin{pmatrix}u\\v\end{pmatrix}' = G\begin{pmatrix}u\\v\end{pmatrix},\;
	\begin{pmatrix}u(0)\\v(0)\end{pmatrix} = \begin{pmatrix}x\\Bx+y\end{pmatrix},
\end{equation}
where
\begin{equation}\label{eq:G}
	G = \begin{bmatrix} -B & I \\ A+B^2 & -B \end{bmatrix}.
\end{equation}  
In the undamped case, when $B=0$ and $A$ generates a cosine family with phase space $\bV\times\bX$, the linear operator $G$ defined above generates a $C_0$-group on $\bV\times\bX$, see, e.g.,\cite[Theorem 3.14.11]{ABHN}. In the general damped case we prove that the linear operator $G$ generates a $C_0$-group on $\bY\times\bX$ for an appropriate subspace $\bY$ of $\bX$, assuming Hypothesis  (HAB), see Section~\ref{s2}. The key idea is to decompose $G=G_0+V_0$, where $G_0$ contains all the ``leading order" unbounded terms of $A$ and $B$ in $G$ along with $I$ in the upper-right corner, see \eqref{eq:3} below, and $V_0$ is a bounded linear operator. This type of decomposition is especially important in the case when $B$ is unbounded. A crucial result is Lemma~\ref{l2.1} below, which we use to show that $G_0$ defined in \eqref{eq:3} generates a $C_0$-group that can be given explicitly. The latter fact is the reason of our somewhat unusual choice of substitution $v=u'+Bu$, see Remark~\ref{r2.2-new}.

Next, we formulate another set of assumptions on $A$ and $B$ used in this paper.

\noindent\textbf{Hypothesis (ABY).} Assume $A : \dom(A) \subseteq\bX \to\bX$, $B : \dom(B) \subseteq\bX \to\bX$ are closed linear operators, $\bY$ is a Banach space such that $\bY \hookrightarrow \bX$, $\rho(B)\ne\emptyset$ and
\begin{itemize}
	\item[(ABY1)] $\dom(A) \subseteq \dom(B^2) \cap\bY$;
	\item[(ABY2)] $B(\dom(A)) \subseteq\bY \subseteq \dom(B)$;
	\item[(ABY3)] The linear operator $G:\dom(A)\times\bY\subseteq\bY\times\bX\to\bY\times\bX$ defined in \eqref{eq:G} generates a $C_0$-group on $\bY\times\bX$ denoted $\{\cT(t)\}_{t\in\RR}$.
\end{itemize}
Under Hypothesis (ABY) we prove existence and uniqueness of classical and mild solutions of \eqref{AB}, under appropriate assumptions on the initial conditions. In turn this leads to a comprehensive treatment of the properties of the cosine and sine families that describe the mild solutions of \eqref{AB}, see Section~\ref{s4}. In other words, Hypothesis (ABY) can be seen as a reformulation of well-posedness of \eqref{AB} via passing to the first order system \eqref{ABG}. On the other hand Hypothesis (HAB) can be viewed as a sufficient condition for Hypothesis (ABY) to hold designed to be easily checked in concrete examples, as described in Section~\ref{s8}.

For unbounded operators $A$, $B$ on a Banach space $\bX$, the well-known book \cite{Fa}
separates the notion of well-posedness of the (second order in time) damped wave equation (understood as existence and uniqueness of classical solutions for a dense set of initial data) from the existence of the phase space $\bY\times\bX$ (where the respective first order in time differential operator generates a strongly continuous group).  
We find this separation inconvenient. Instead, our Hypothesis \textnormal{(ABY)} directly postulates the existence of the phase space and at the same time provides reformulation of the well-posedeness of 
the damped wave equation understood in terms of either mild or classical solutions via well-posedeness of the respective first order in time equation on $\bY\times\bX$. Indeed, the core results of the current paper, Theorem \ref{t3.1} and \ref{t3.10}, show that Hypothesis \textnormal{(ABY)} imply the existence and uniqueness of the classical and mild solutions respectively. Going further, we use Hypothesis \textnormal{(ABY)} to define the generalized cosine and sine families and describe in Section \ref{s4} their ``trigonometric'' properties and in Section \ref{s5} various properties of the non-homogeneous damped wave equations also implied by Hypothesis \textnormal{(ABY)}. Here, Hypotheses \textnormal{(ABY1)--(ABY2)} are of preliminary nature helping to set the stage for the main Hypothesis \textnormal{(ABY3)} of well-posedeness of the first order in time system. Naturally, a question arises how to verify Hypothesis \textnormal{(ABY3)}. One of the main achievement of the current paper is formulating (easily verifiable for concrete multi- and one-dimensional
PDEs) assumptions on the operators $A$ and $B$ listed as Hypothesis \textnormal{(HAB)} (and special cases of \textnormal{(HAB)}, see Hypotheses \textnormal{(H1)}, \textnormal{(H2)} and \textnormal{(H3)} in Section \ref{s8}) under which Hypothesis \textnormal{(ABY)} does hold. That \textnormal{(HAB)} implies \textnormal{(ABY)} we show in yet another core result, 
Theorem \ref{t2.3}. Finally, in the forthcoming work \cite{SeconPaper} we show that the phase space $\bY\times\bX$ is uniquely determined by  Hypothesis \textnormal{(ABY)}; 
this therefore completes the theory of the abstract \emph{damped} wave equation to the level of the undamped case, cf. \cite[Theorem 3.14.11]{ABHN}.

From (ABY2) one can readily derive
\begin{equation}\label{space embedding}
	\dom(A)\hookrightarrow\bY\hookrightarrow\dom(B)\hookrightarrow\bX,	
\end{equation}	
see Remark~\ref{r3.1} below, whence in general, all the inclusions are strict, see Example~\ref{e8.5}.

In the undamped case, when $A$ is the generator of a cosine family, the linear operator $\left[\begin{smallmatrix} 0 & I \\ A & 0 \end{smallmatrix}\right]$ generates not only a $C_0$-semigroup but a $C_0$-group on $\bV\times\bX$, the phase space of $A$, due to the fact that the cosine family is an even function and the sine family is an odd function. In the undamped case we expect the  
generalized cosine and sine families to be defined on the full line, are neither even nor odd. This is the reason why we need  to insist in Hypothesis (ABY) for the operator $G$ to generate a $C_0$-group, not merely a $C_0$-semigroup.

Assuming \textnormal{(ABY1)--(ABY3)} we focus on finding classical solutions and then Laplace Transform mild solutions of \eqref{AB}. To achieve this goal we first characterize the resolvent operator of $G$ in terms of the invertibility properties of the pencil $Q(\lambda)=\lambda^2 I+2\lambda B-A$. Another crucial step is to understand the consequences of hypothesis (ABY3) for the block matrix components of $\{\cT(t)\}_{t\in\RR}$. At this stage it is more convenient to work with the Laplace Transform mild solutions because this allows us to define the generalized cosine and sine families earlier in the paper as solutions of the special cases of \eqref{AB} with either $y=0$ or $x=0$. The families are denoted by $\{\CAB(t)\}_{t\in\RR}$ and $\{\SAB(t)\}_{t\in\RR}$, respectively, and are defined in \eqref{def-CAB} and \eqref{def-SAB}. 

One of our main goals is to prove the existence and uniqueness of the integral mild solutions of \eqref{AB}, which is achieved once we have the existence of the generalized cosine and sine families of solution operators.
Another goal is to prove that under Hypothesis (HAB) the generalized cosine and sine families retain all the important properties that one expects from the cosine and sine families in the undamped case.  In particular, the subspaces $\bY$ and $\dom(A)$ are invariant under $\CAB(t)$ for any $t\in\RR$, while $\SAB(t)$ maps $\bX$ into $\bY$ and $\bY$ into \dom(A). In Section~\ref{s4} we prove additional details summarized in Figure~\ref{Spaces} below. Also, we are interested to find exact conditions when the trajectories of the generalized cosine and sine families are of class $\mathcal{C}^k$, $k=1,2$. As expected by comparing to the undamped case, the generalized sine function has better regularity properties than the generalized cosine function. The trajectory $\SAB(\cdot)x$ of the generalized sine is always of class $\mathcal{C}^1$ and is of class $\mathcal{C}^2$ if $x\in\bY$. The trajectory $\CAB(\cdot)x$ of the generalized cosine is of class $\mathcal{C}^1$ if $x\in\bY$ and is of class $\mathcal{C}^2$ if $x\in\dom(A)$. A crucial ingredient in these proofs is our assumption that $\dom(A)\subseteq\dom(B^2)$ and that the linear operator $B$ maps $\dom(A)$ into $\bY$ and $\bY$ into $\bX$.

\begin{figure}[htbp]
\begin{center}
	\begin{tikzpicture}[
		every node/.style={font=\normalsize},
		arr/.style={-stealth, thin},
		lbl/.style={font=\small, inner sep=2pt}
		]
		
		\node (X1) at (0, 8)  {$\bX$};
		\node (X2) at (8, 8)  {$\bX$};
		\node (dB1) at (0, 5.5) {$\operatorname{dom}(B)$};
		\node (dB2) at (8, 5.5) {$\operatorname{dom}(B)$};
		\node (Y1) at (0, 4)  {$\bY$};
		\node (Y2) at (8, 4)  {$\bY$};
		\node (dA1) at (0, 0) {$\operatorname{dom}(A)$};
		\node (dA2) at (8, 0) {$\operatorname{dom}(A)$};
		
		\draw[arr] (X1) -- node[lbl, above]{$S'_{A,B}(t)$} (X2);
		
		\draw[arr] (X1) -- node[lbl, above, sloped, pos=0.2]{$S_{A,B}(t)$}   (Y2);
		\draw[arr] (dB1) -- node[lbl, above, sloped, pos=0.6]{$C_{A,B}(t)$} (X2) ;
		\draw[arr] (Y1) -- node[lbl, above, sloped, pos=0.2]{$B$} 
		node[lbl, below, sloped, pos=0.65]{$C'_{A,B}(t)$} (X2) ;
		
		
		
		\draw[arr] (Y1) -- node[lbl, above, sloped, pos=0.4]{$C_{A,B}(t)$} 
		node[lbl, below, sloped, pos=0.65]{$S'_{A,B}(t)$} (Y2);
		\draw[arr] (Y1) -- node[lbl, above, sloped, pos=0.25]{$S_{A,B}(t)$}   (dA2);
		\draw[arr] (dA1) -- node[lbl, above, sloped, pos=0.65]{$C'_{A,B}(t)$} 
		node[lbl, above, sloped, pos=0.25]{$B$} (Y2)  ;
		
		
		
		\draw[arr] (dA1) -- node[lbl, above, sloped, pos=0.4]{$C_{A,B}(t)$} 
		node[lbl, below, sloped, pos=0.65]{$S'_{A,B}(t)$} (dA2);
		
	\end{tikzpicture}
\end{center}
\caption{A detailed representation of the action of generalized cosine and sine families and their derivatives on the sequences of spaces from \eqref{space embedding}.}
\label{Spaces}
\end{figure}

The generalized cosine and sine functions and their first order derivatives (in the strong sense) are exponentially bounded in various norms, c.f. Lemma~\ref{l4.5}. In addition, these operator valued functions maintain their exponential boundedness when multiplied by $A$ or $B$. While some of these properties follow immediately from the exponential growth property of the $C_0$-group $\{\cT(t)\}_{t\in\RR}$, others require more work. 

Only after we have established a significant number of  properties of any generalized cosine and sine families we can show existence and uniqueness of the integral mild solutions of \eqref{AB}. In the process of proving the latter result we also show that the integral mild solutions and Laplace Transform mild solutions of \eqref{AB} do coincide. 

A major difference between the damped and the undamped case is that the values of the generalized cosine function are bounded linear operators acting from $\dom(B)$ to $\bX$, not necessarily from $\bX$ to itself. This fact appears in a natural way, as one can readily see from \eqref{Def-Lap-Mild} or \eqref{Int-Mild}. This is a major difference between the current work and the seminal work of Fattorini (\cite{Fa}), whose notion of well-posedness of \eqref{AB} requires that $\CAB(t)\in\mathcal{B}(\bX)$ for any $t\in\RR$. Despite this shortcoming, the book \cite{Fa} served as a major inspiration for us.

The existence of the generalized cosine and sine family allows us to study the inhomogeneous equation  
\begin{equation}\label{ABf}
	\begin{cases}
		u''(t) + 2Bu'(t) - Au(t) = f(t),\; t\in\RR,\\
		u(0) = x,\,
		u'(0) = y.
	\end{cases}
\end{equation}
Here $f \in L^1_{\mathrm{loc}}(\mathbb{R}, \bX)$, $x, y \in\bX$. Similar to the homogeneous problem \eqref{AB}, we can define mild solutions of \eqref{ABf} by taking Laplace Transform, or by integrating the equation twice. In Section~\ref{s5} we show that regardless of which definition we use, \eqref{ABf} has a unique mild solution given by formula $u=\CAB(\cdot)x+\SAB(\cdot)y+\SAB*f$, assuming $x\in\dom(B)$, $y\in\bX$ and some minimal conditions on $f$, see Theorem~\ref{t5.5}. Also, we discuss classical solutions of \eqref{ABf} and prove their existence and uniqueness provided $x\in\dom(A)$ and $y\in\bY$ (which are exactly the conditions needed to guarantee the existence of a unique solution of the associated homogeneous equation \eqref{AB}) and  $f\in\mathcal{C}^1(\RR,\bX)$ or $f\in\mathcal{C}(\RR,\bY)$. The case $f\in\mathcal{C}^1(\RR,\bX)$ was also considered by Fattorini in \cite{Fa}, under the additional assumption that implies that $\CAB(t)\in\mathcal{B}(\bX)$ for any $t\in\RR$. 
Once we have existence and uniqueness results of the solutions (classical and mild) of \eqref{ABf}, we can naturally obtain ``trigonometric" identities for $\CAB(\cdot)$ and $\SAB(\cdot)$, see Lemma~\ref{l5.1} and Lemma~\ref{l5.6} below.

\noindent\textbf{A glossary of notation.}
$\bX$ is a Banach space with norm $\|\cdot\|$, $L^p(\RR^m,\bX)$, $p\geq 1$, denotes the Lebesgue space of functions on $\RR^m$ with values in $\bX$ with the Lebesgue measure $\rmd x$, 
$W^{s,p}(\RR^m,\bX)$, $s> 0$, $p\geq 1$, is the Sobolev space of $\bX$-valued functions, $\mathcal{C}(\Omega,\bX)$ is the space of continuous $\bX$-valued functions, $\mathcal{C}^m(\Omega,\bX)$ is the space of functions on $\Omega$ having continuous derivatives of order up to $m\in\NN\cup\{0\}$. $\cC^m_\rmb(\R^k,\C^k)$ denotes the space of all functions  in $\cC^m_\rmb(\R^k,\C^k)$ having all partial derivatives of order up to $m$ bounded. If $\bX$ is a Hilbert space then we denote by $H^s(\RR^m,\bX)=W^{s,2}(\RR^m,\bX)$ for $s>0$. The space of locally $p$-integrable functions with values in a Banach space $\bX$ is denoted by $L^p_{\mathrm{loc}}(\RR^m,\bX)$. 
The identity operator on the Banach space $\bX$ is denoted by $I_\bX$. The set of bounded linear operators in the Banach space $\bX$ is denoted by $\cB(\bX)$. The space of continuous linear operator between two topological vector spaces $\bV_1$ and $\bV_2$ is denoted by $\cC\cL(\bV_1,\bV_2)$. For an operator $B$ on a Banach space $\bX$, we use  $\dom(B)$, $\Ker B$, $\Range B$, $\mathrm{Graph}(B)$, $\sigma(B)$, $\rho(B)$, and $B_{|\bW}$ to denote the domain, kernel, range, graph, spectrum, resolvent set, and the restriction of $B$ to a subspace $\bW$ of $\bX$. The adjoint operator is denoted by $B^*$. We denote by $\sigma_{\mathrm{disc}}(B)$ the set of isolated eigenvalues of finite algebraic multiplicity of the linear operator $B$, and by $\sigma_{\mathrm{ess}}(B)$ its complement in the spectrum of $B$. We use notation $R(\mu,B)=(\mu I_\bX-B)^{-1}$ for the resolvent of $B$ at $\mu\in\rho(B)$. For any Banach space $\bX$ we use notation $\langle x,x^*\rangle:=x^*(x)$ for any $(x,x^*)\in\bX\times\bX^*$.  The direct sum of two subspaces $\bW_1$ and $\bW_2$ is denoted by $\bW_1\oplus \bW_2$. The operator of multiplication by a function $g$ is denoted by $M_g$.  The open disc in $\CC$ centered at $a$ of radius $\eps>0$ is denoted by $D(a,\eps)$. We denote by $\CC_\omega^+$ the set of all $\lambda\in\CC$ such that $\mathrm{Re}\lambda>\omega$. 
If $K: \mathbb{R} \to \mathcal{B}(\bX_1, \bX_2)$ is such that $K(\cdot)\,x_1 \in \mathcal{C}^1(\mathbb{R}, \bX_2)$ for any $x_1 \in \bX_1$ we denote by $K': \mathbb{R} \to \mathcal{B}(\bX_1, \bX_2)$ is the strongly continuous operator-valued function defined by $K'(t)\,x = (K(t)\,x)'$. 	
	
\section{Group Generators and First Order Systems}\label{s2}
In this section we prove that under Hypotheses (HAB) the operator $G$ defined in \eqref{eq:G} generates a $C_0$-group on a subspace of $\bX\times\bX$. We begin by proving the following general lemma
\begin{lemma}\label{l2.1}
	Assume $A_0:\dom(A_0)\subseteq\bX\to\bX$ and $B_0:\dom(B_0)\subseteq\bX\to\bX$ are generators
	of $C_0$-groups $\{e^{tA_0}\}_{t\in\R}$ and $\{e^{tB_0}\}_{t\in\R}$ on $\bX$, respectively, such that
	\begin{enumerate}
		\item[(i)] $A_0 B_0 = B_0 A_0$,
		\item[(ii)] $\dom(A_0)\subseteq\dom(B_0)$, $B_0(\dom(A_0^2))\subseteq\dom(A_0)$,
		\item[(iii)] $A_0^\pm := \pm A_0 - B_0$ generate $C_0$-groups $\{e^{tA_0^\pm}\}_{t\in\R}$ on $\bX$, respectively.
	\end{enumerate}
	Then, $G_0:\dom(A_0^2)\times\dom(A_0)\subseteq\dom(A_0)\times \bX\to\dom(A_0)\times \bX$ defined by
	\begin{equation}\label{eq:3}
		G_0 = \begin{bmatrix} -B_0 & I \\ A_0^2 & -B_0 \end{bmatrix}
	\end{equation}
	is the generator of the $C_0$-group $\{\cT_0(t)\}_{t\in\R}$ on $\dom(A_0)\times\bX$ given by
\begin{equation}\label{eq:4}
\cT_0(t) = \begin{bmatrix}
\tfrac{1}{2}(e^{tA_0^+}+e^{tA_0^-})\big|_{\dom(A_0)} &
\displaystyle\int_0^t e^{(t-\tau)A_0^+}e^{\tau A_0^-}\,\rmd\tau \\
\tfrac{1}{2}(e^{tA_0^+}-e^{tA_0^-})A_0\big|_{\dom(A_0)} &
\tfrac{1}{2}(e^{tA_0^+}+e^{tA_0^-})
\end{bmatrix},\; t\in\RR.
\end{equation}
\end{lemma}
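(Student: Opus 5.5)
The proof rests on a formal diagonalization of $G_0$. Write a trajectory of $\cT_0$ as $(u,v)$ and set $p=v+A_0u$, $q=v-A_0u$. Using $A_0B_0=B_0A_0$, the system $u'=-B_0u+v$, $v'=A_0^2u-B_0v$ becomes, formally,
\begin{equation*}
p'=(A_0-B_0)p=A_0^+p,\qquad q'=(-A_0-B_0)q=A_0^-q .
\end{equation*}
So $\cT_0(t)$ should equal $J^{-1}\,\mathrm{diag}(e^{tA_0^+},e^{tA_0^-})\,J$, where $J(u,v)=(v+A_0u,\,v-A_0u)$. Recovering $(u,v)$ gives $v=\tfrac12(p+q)$ and $A_0u=\tfrac12(p-q)$. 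Formally this produces exactly the entries in \eqref{eq:4}; the upper right entry comes from $\tfrac12 A_0^{-1}(e^{tA_0^+}-e^{tA_0^-})=\int_0^te^{(t-\tau)A_0^+}e^{\tau A_0^-}\,\rmd\tau$. However, $J$ is only injective with bounded inverse when $A_0$ is invertible, which is not assumed. Shifting $A_0$ by a scalar would destroy the $A_0^2$ entry. I will therefore use the diagonalization only as a guide and verify the group and its generator directly.

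\textbf{Step 1 (commutation).} From (i), resolvents of $A_0$ and $B_0$ commute, hence so do those of $A_0^+$, $A_0^-$, $A_0$ and $B_0$. By the exponential formula, the groups $e^{tA_0^\pm}$ commute with each other and with $A_0$ and $B_0$ on their domains; in particular they leave $\dom(A_0)$ invariant. For $x\in\dom(A_0)\subseteq\dom(B_0)$, I will differentiate $\tau\mapsto e^{(t-\tau)A_0^+}e^{\tau A_0^-}x$ and get $-e^{(t-\tau)A_0^+}e^{\tau A_0^-}2A_0x$. Integrating yields
\begin{equation*}
\Big(\int_0^te^{(t-\tau)A_0^+}e^{\tau A_0^-}\rmd\tau\Big)2A_0x=e^{tA_0^+}x-e^{tA_0^-}x ,
\end{equation*}
which is the rigorous substitute for the formal $A_0^{-1}$. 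I also need to show the upper right entry $K(t)$ maps $\bX$ into $\dom(A_0)$ continuously in $t$. Using $A_0=\tfrac12(A_0^+-A_0^-)$ and the derivative above, $A_0K(t)x=\tfrac12(e^{tA_0^+}-e^{tA_0^-})x$, first on $\dom(A_0)$ and then on $\bX$ by closedness of $A_0$.

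\textbf{Step 2 (group property).} Boundedness of each entry on $\dom(A_0)\times\bX$ (with the graph norm on $\dom(A_0)$) and strong continuity then follow from Step 1. For $\cT_0(t+s)=\cT_0(t)\cT_0(s)$ I will rewrite $\cT_0$ in the $(p,q)$ variables: the map $(u,v)\mapsto(v+A_0u,v-A_0u)$ is bounded from $\dom(A_0)\times\bX$ into $\bX\times\bX$. The identities of Step 1 show that $J\cT_0(t)=\mathrm{diag}(e^{tA_0^\pm})J$, and $J$ is injective on the relevant combinations because the $u$-component is determined by the first row. Alternatively, one can check the block identities entrywise using the commuting groups. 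Either way, $\cT_0(0)=I$.

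\textbf{Step 3 (generator), the main obstacle.} Let $\cG$ denote the generator. For $(u,v)\in\dom(A_0^2)\times\dom(A_0)$ I will differentiate each entry at $t=0$. This needs $A_0u\in\dom(A_0)\subseteq\dom(A_0^\pm)$ for the lower left entry, and $B_0u\in\dom(A_0)$ by (ii) so that the first component of the derivative lies in $\dom(A_0)$ and converges in graph norm. The result is $G_0\subseteq\cG$. For the reverse inclusion I will show that $\dom(G_0)$ is dense and invariant under $\cT_0(t)$, using commutation and (ii). Hence it is a core, so $\cG=\overline{G_0}$. It then remains to prove $G_0$ is closed: if $(u_n,v_n)\to(u,v)$ in $\dom(A_0)\times\bX$ and $G_0(u_n,v_n)$ converges, then $J$-type combinations give convergence of $A_0^\pm(v_n\pm A_0u_n)$, and closedness of $A_0^\pm$ places the limits in the domains. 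Handling the graph-norm topology in this last step is where I expect the care to be needed. If it becomes awkward, the fallback is to show that $\lambda-G_0$ is surjective for large real $\lambda$, using $\lambda\in\rho(A_0^+)\cap\rho(A_0^-)$ and the explicit resolvent obtained from the Laplace transform of \eqref{eq:4}. Since $G_0\subseteq\cG$ and $\lambda-\cG$ is injective, surjectivity of $\lambda-G_0$ forces $G_0=\cG$.
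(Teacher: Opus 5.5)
Your proposal is correct, but it identifies the generator by a different route than the paper. The paper never differentiates $\cT_0$ at $t=0$. It solves $(\lambda-G_0)\binom{u}{v}=\binom{f}{g}$ via the factorization $Q_0(\lambda)=(\lambda I-A_0^+)(\lambda I-A_0^-)$. It then shows the resulting explicit resolvent \eqref{2.1.26} is bounded on $\dom(A_0)\times\bX$ for $\lambda\in\CC^+_{\omega_0}$, and matches it with the Laplace transform of \eqref{eq:4}.

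You instead prove $G_0\subseteq\mathcal{G}$ by differentiating at $t=0$. Your use of $u\in\dom(A_0^2)$ and of (ii) is exactly what graph-norm convergence requires. You then get equality from the core theorem plus closedness of $G_0$, and both ingredients check out:
\begin{itemize}
\item $\dom(A_0^2)\times\dom(A_0)$ is invariant, because $A_0K(t)v=\tfrac12(e^{tA_0^+}-e^{tA_0^-})v\in\dom(A_0)$ for $v\in\dom(A_0)$.
\item In the closedness argument, both $A_0^+p_n=A_0(v_n-B_0u_n)+(A_0^2u_n-B_0v_n)$ and $A_0^-q_n=-A_0(v_n-B_0u_n)+(A_0^2u_n-B_0v_n)$ converge. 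Hence $v=\tfrac12(p+q)$ and $A_0u=\tfrac12(p-q)$ lie in $\dom(A_0)$. Adding the two limit relations, together with $B_0|_{\dom(A_0)}\in\mathcal{B}(\dom(A_0),\bX)$, identifies $G_0\binom{u}{v}$ with the limit.
\end{itemize}
Your route is more structural and avoids resolvent algebra entirely. The paper's route produces the explicit formula \eqref{2.1.26}, the prototype of \eqref{3.2.5} and of the pencil characterization in Lemma~\ref{l3.2}. Your fallback (surjectivity of $\lambda-G_0$) is essentially the paper's argument.

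One caution on Step 2. Because $J$ is not injective, the intertwining $J\cT_0(t)=\mathrm{diag}(e^{tA_0^+},e^{tA_0^-})J$ only shows two things: the second component of $(\cT_0(t+s)-\cT_0(t)\cT_0(s))\binom{u}{v}$ vanishes, and $A_0$ annihilates the first. The first-row identities therefore have to be verified directly, and the upper-right entry is the one where applying $A_0$ cannot be undone. Set $\Sigma(t):=\int_0^te^{-2\tau A_0}\,\rmd\tau$, so that $K(t)=e^{tA_0^+}\Sigma(t)$ by \eqref{l2.1.4}. The identity $K(t+s)=\tfrac12(e^{tA_0^+}+e^{tA_0^-})K(s)+K(t)\tfrac12(e^{sA_0^+}+e^{sA_0^-})$ then follows from $e^{tA_0^-}=e^{-2tA_0}e^{tA_0^+}$ by averaging the two decompositions $\Sigma(t+s)=\Sigma(t)+e^{-2tA_0}\Sigma(s)=\Sigma(s)+e^{-2sA_0}\Sigma(t)$. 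So rely on your entrywise alternative there; that is what the paper's ``long but straightforward computation'' amounts to.
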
 
\begin{proof} First, we prove that $G_0$ and $\cT_0(t)$ are well-defined. Indeed, assumption (ii) implies that $G_0$ is well-defined and we may set $\dom(A_0^\pm)=\dom(A_0)$. Since $e^{tA_0^\pm}(\dom(A_0^\pm))\subseteq\dom(A_0^\pm)$, we have
\begin{equation}\label{l2.1.1}
		e^{tA_0^\pm}(\dom(A_0))\subseteq\dom(A_0)\quad\text{for any }t\in\R.
	\end{equation}
From assumption (i) and the definition of $A_0^\pm$ we infer that $A_0,B_0,A_0^+,A_0^-$ commute. Similarly, $ R(\lambda_1,A_0)$, $R(\lambda_2,B_0)$, $R(\lambda_3,A_0^+)$ and $R(\lambda_4,A_0^-)$ commute 
for any $\lambda_1\in\rho(A_0)$, $\lambda_2\in\rho(B_0)$, $\lambda_3\in\rho(A_0^+)$ and $\lambda_4\in\rho(A_0^-)$. It follows that 
	\begin{equation}\label{l2.1.2}
		e^{t_1A_0},e^{t_2B_0},e^{t_3A_0^+},e^{t_4A_0^-}\;\mbox{commute for any}\;t_1,t_2,t_3,t_4\in\R,
	\end{equation}	
	\begin{equation}\label{l2.1.3}
		e^{tA_0}e^{sB_0}=e^{sB_0}e^{tA_0},\;\;e^{tA_0^\pm}=e^{tA_0}e^{-tB_0}=e^{-tB_0}e^{tA_0}\;\mbox{for any}\; t,s\in\R
	\end{equation}	
	From \eqref{l2.1.1}, \eqref{l2.1.2} and \eqref{l2.1.3} we obtain 
	\begin{equation}\label{l2.1.4}
		\int_0^t e^{(t-\tau)A_0^+}e^{\tau A_0^-}x\,\rmd\tau
		= e^{tA_0^+}\int_0^t e^{-A_0\tau}e^{\tau B_0}e^{-A_0\tau}e^{-\tau B_0}x\,\rmd\tau=e^{tA_0^+}\int_0^t e^{-2\tau A_0}x\,\rmd\tau\in\dom(A_0)
	\end{equation}
	for any $t\in\R$, $x\in\bX$. Hence $\cT_0(t)$ is well-defined.
	
	Next, we prove that $\{\cT_0(t)\}$ is a $C_0$-group on $\dom(A_0)\times \bX$. Since $A_0$ and $A_0^\pm$ commute,
	\begin{equation}\label{l2.1.5}
		A_0 e^{tA_0^\pm}x = e^{tA_0^\pm}A_0 x\;\mbox{for any}\;\,t\in\R,\;x\in\dom(A_0).
	\end{equation}
	Moreover, from \eqref{l2.1.3}, \eqref{l2.1.4} and \eqref{l2.1.5} we can see that 
	\begin{equation}\label{l2.1.6}
		A_0\int_0^t e^{(t-\tau)A_0^+}e^{\tau A_0^-}x\,\rmd\tau
		= e^{tA_0^+}A_0\int_0^t e^{-2\tau A_0}x\,\rmd\tau
		= \tfrac{1}{2}(e^{tA_0^+}-e^{tA_0^-})x
	\end{equation}
	for any $x\in \bX$, $t\in\R$. Clearly, by letting $t=0$ in \eqref{eq:4}, we have $\cT_0(0) = I_{\dom(A_0)\times \bX}$. The group property of $\cT_0$ follows after a long but straightforward computation based on \eqref{l2.1.2}--\eqref{l2.1.6}. 
	
	Next, we prove the strong continuity of $\cT_0$. Fix $y\in\dom(A_0)$, $x\in \bX$. Then, from \eqref{eq:4} and \eqref{l2.1.6} we obtain 
	\begin{align}\label{2.1.14}
		&\left\|\cT_0(t)\tbinom{y}{x}-\tbinom{y}{x}\right\|_{\dom(A_0)\times\bX}=\left\|\tfrac{1}{2}(e^{tA_0^+}+e^{tA_0^-})y-y
		+e^{tA_0^+}\int_0^t e^{-2\tau A_0}x\,d\tau\right\|+\left\|\tfrac{1}{2}(e^{tA_0^+}-e^{tA_0^-})A_0 y\right\|
		\nonumber\\
		&\qquad\qquad\qquad\qquad+\left\|\tfrac{1}{2}(e^{tA_0^+}+e^{tA_0^-})A_0y-A_0y
		+\tfrac{1}{2}(e^{tA_0^+}-e^{tA_0^-})x\right\|   
		+\left\|\tfrac{1}{2}(e^{tA_0^+}+e^{tA_0^-})x-x\right\|
	\end{align}
	for any $t\in\RR$. Since $\{e^{tA_0^\pm}\}_{t\in\RR}$ and $\{e^{tA_0}\}_{t\in\RR}$ are $C_0$-groups we know that
	\begin{equation}\label{2.1.15}
		\lim_{t\to0}e^{tA_0^\pm}z_1=z_1,\quad\lim_{t\to0}\int_0^t e^{-2\tau A_0}z_2\,d\tau=0
		\;\mbox{for any}\;z_1,z_2\in\bX,
	\end{equation}
	and so $\{\cT_0(t)\}_{t\in\RR}$ is a $C_0$-group.
	
	To finish the proof of the lemma we prove that $G_0$, defined in \eqref{eq:3}, is the generator of $\{\cT_0(t)\}_{t\in\R}$. First, we show that there exists $\omega_0\in\RR$ such that $\CC_{\omega_0}^+\subseteq\rho(G_0)$ and compute $R(\lambda,G_0)$ for $\lambda\in\CC_{\omega_0}^+$. Since $\{e^{tA_0^\pm}\}$ are $C_0$-groups, there exist $\omega_0^\pm\in\R$ and $M_0^\pm\geq 0$ such that
	\begin{equation}\label{2.1.16}
		\|e^{tA_0^\pm}\|\le M_0^\pm e^{\omega_0^\pm|t|}\;\;\mbox{for any}\;\;t\in\R.
	\end{equation}
	Define $\omega_0=\max\{\omega_0^+,\omega_0^-\}$ and $Q_0(\lambda):\dom(A_0^2)\subseteq\bX\to \bX$ by
	\begin{equation}\label{2.1.17}
		Q_0(\lambda)=(\lambda I+B_0)^2-A_0^2,\;\mbox{for}\;\lambda\in\CC.
	\end{equation}
	From assumption (ii),  $\dom(A_0^2)\subseteq\dom(B_0^2)$. By assumption (i), 
	\begin{equation}\label{2.1.18}
		Q_0(\lambda)=(\lambda I+B_0-A_0)(\lambda I+B_0+A_0)=(\lambda I-A_0^+)(\lambda I-A_0^-)\;\mbox{for any}\;\lambda\in\CC.
	\end{equation}
	It follows that $Q_0(\lambda)$ is invertible with bounded inverse for $\lambda\in\mathbb{C}_{\omega_0}^+$ and 
	\begin{equation}\label{2.1.19}
		Q_0^{-1}(\lambda)=R(\lambda,A_0^-)R(\lambda,A_0^+)=R(\lambda,A_0^+)R(\lambda,A_0^-)\;\mbox{for any}\;\lambda\in\CC.
	\end{equation}
	Next, we solve 
	\begin{equation}\label{2.1.20}
		\lambda\binom{u}{v}-G_0\binom{u}{v}=\binom{f}{g},\;\mbox{with}\; u\in\dom(A_0^2)\;\mbox{and}\; v,f\in\dom(A_0), g\in\bX,\;\mbox{for}\; \lambda\in\mathbb{C}_{\omega_0}^+. 
	\end{equation}
	By \eqref{eq:3} equation \eqref{2.1.20} is equivalent to
	\begin{equation}\label{2.1.21}
		\begin{cases}v=(\lambda I+B_0)u-f\\Q_0(\lambda)u=(\lambda I+B_0)f+g\end{cases}
	\end{equation}
	By assumption (ii), $(\lambda I+B_0)$ commutes with $Q_0^{-1}(\lambda)$. Thus, the solution of equation \eqref{2.1.20} is given by
	\begin{equation}\label{2.1.22}
		\begin{cases}
			u=(\lambda I+B_0)Q_0^{-1}(\lambda)f+Q_0^{-1}(\lambda)g,\\
			v=(\lambda I+B_0)^2Q_0^{-1}(\lambda)f-f+(\lambda I+B_0)Q_0^{-1}(\lambda)g.
		\end{cases}
	\end{equation}
	We conclude that $\lambda I_{\dom(A_0)\times \bX}-G_0$ is invertible for any $\lambda\in\mathbb{C}_{\omega_0}^+$. Next, we will show that $\big(\lambda I_{\dom(A_0)\times \bX}-G_0\big)^{-1}$ is bounded on $\dom(A_0)\times\bX$. First, using \eqref{2.1.19},
	\begin{align}\label{2.1.23}
		(\lambda I+B_0)Q_0^{-1}(\lambda)&=\tfrac{1}{2}\Big((\lambda I-A_0^+)+(\lambda I-A_0^-)\Big)R(\lambda,A_0^+)R(\lambda,A_0^-)\nonumber\\
		&=\tfrac{1}{2}\Big(R(\lambda,A_0^+)+R(\lambda,A_0^-)\Big)\;\mbox{for any}\;\lambda\in\CC_{\omega_0}^+.
	\end{align}	
	Since $(\lambda I-A_0^\pm)R(\lambda,A_0^\pm)=I$ for any $\lambda\in\CC_{\omega_0}^+$, 
	\begin{equation}\label{2.1.24}
		(\lambda I+B_0)R(\lambda,A_0^+)=I+A_0 R(\lambda,A_0^+),\;(\lambda I+B_0)R(\lambda,A_0^-)=I-A_0 R(\lambda,A_0^-)
	\end{equation}
	for any $\lambda\in\CC_{\omega_0}^+$, which allows us to conclude that 
	\begin{align}\label{2.1.25}
		(\lambda I+B_0)^2Q_0^{-1}(\lambda)-I&=\tfrac{1}{2}\Big((\lambda I+B_0)R(\lambda,A_0^+)+(\lambda I+B_0)R(\lambda,A_0^-)\Big)-I\nonumber\\
		&= \tfrac{1}{2}A_0\Big( R(\lambda,A_0^+)-R(\lambda,A_0^-)\Big)\quad\text{for any }\lambda\in\mathbb{C}_{\omega_0}^+.
	\end{align}
	From \eqref{2.1.22} and \eqref{2.1.25} we obtain 
	\begin{equation}\label{2.1.26}
		(\lambda I_{\dom(A_0)\times\bX}-G_0)^{-1}
		=\begin{bmatrix}
			\tfrac{1}{2}\Big(R(\lambda,A_0^+)+R(\lambda,A_0^-)\Big)\big|_{\dom(A_0)} &
			R(\lambda,A_0^+)R(\lambda,A_0^-)\\
			\tfrac{1}{2}\big(R(\lambda,A_0^+)-R(\lambda,A_0^-)\big)A_0\big|_{\dom(A_0)} &
			\tfrac{1}{2}\big(R(\lambda,A_0^+)+R(\lambda,A_0^-)\big)
		\end{bmatrix}
	\end{equation}
	for any $\lambda\in\CC_{\omega_0}^+$. Since $\dom(A_0^\pm)=\dom(A_0)$ and $A_0$ is closed, we infer that
	$A_0 R(\lambda,A_0^\pm)\in\mathcal{B}(\bX)$ and $\Big(R(\lambda,A_0^+)+R(\lambda,A_0^-)\Big)\big|_{\dom(A_0)}\in\mathcal{B}(\dom(A_0))$ for any $\lambda\in\CC_{\omega_0}^+$. It follows that $A_0R(\lambda,A_0^+)R(\lambda,A_0^-)\in\mathcal{B}(\bX)$, hence $R(\lambda,A_0^+)R(\lambda,A_0^-)\in\mathcal{B}(\bX,\dom(A_0))$. Using the fact that $A_0\big|_{\dom(A_0)}\in\mathcal{B}(\dom(A_0),\bX)$ and $R(\lambda,A_0^\pm)\in\mathcal{B}(\bX)$ we have $\big(R(\lambda,A_0^+)-R(\lambda,A_0^-)\big)A_0\big|_{\dom(A_0)}\in\mathcal{B}(\dom(A_0),\bX)$. We conclude that all four blocks in \eqref{2.1.26} are bounded, which proves 
	$(\lambda I_{\dom(A_0)\times \bX}-G_0)^{-1}\in\mathcal{B}(\dom(A_0)\times \bX)$ for any $\lambda\in\CC_{\omega_0}^+$, thus $\lambda\in\CC_{\omega_0}^+\subset\rho(G_0)$.
	
	Taking Laplace transform in \eqref{eq:4} and using \eqref{2.1.26}, we conclude that
	\begin{equation}\label{2.1.27}
		\mathcal{L}\Bigl\{\cT_0(\cdot)\tbinom{y}{x}\Bigr\}(\lambda)=R(\lambda,G_0)\tbinom{y}{x}
		\;\mbox{for any}\;\lambda\in\mathbb{C}_{\omega_0}^+,\;y\in\dom(A_0),\;x\in\bX,
	\end{equation}
	which proves that $G_0$ is the generator of $\{\cT_0(t)\}_{t\in\R}$.
\end{proof}
\begin{remark}\label{r2.2-new}
	A crucial ingredient in the proof of previous lemma is the fact that $Q_0(\lambda)$ can be factored, see \eqref{2.1.18}. Consequently, we obtain the representation \eqref{2.1.26}, which allows us to find the relatively simple formula \eqref{eq:4}. This fact is due to the block-matrix structure of the operator $G$ defined in \eqref{eq:G}, which can be traced to our choice of substitution $v=u'+Bu$, when passing from the second order equation to the first order system. 
\end{remark}	
We are now ready to show that Hypothesis (ABY) is satisfied provided Hypothesis (HAB) is satisfied. 
\begin{theorem}\label{t2.3}
Assume Hypothesis \emph{(HAB)} and set $\bY=\dom(A_0)$. Then,  Hypothesis \textnormal{(ABY)} is satisfied. In particular, $G$ defined in \eqref{eq:G} generates a $C_0$-group on $\dom(A_0)\times\bX$.
\end{theorem}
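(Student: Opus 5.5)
The plan is to write $G=G_0+V_0$ on $\bY\times\bX=\dom(A_0)\times\bX$, where $G_0$ is the operator from \eqref{eq:3}, and to show that $V_0$ is a bounded operator on $\dom(A_0)\times\bX$. Lemma~\ref{l2.1} (its hypotheses are exactly (HAB)(i)--(iii)) and the bounded perturbation theorem for $C_0$-groups then give that $G$ generates a $C_0$-group. Along the way the preliminary items (ABY1), (ABY2) and $\rho(B)\neq\emptyset$ have to be checked.

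\textbf{Step 1: (ABY1), (ABY2), closedness and $\rho(B)\ne\emptyset$.}
\begin{itemize}
\item By (HAB)(ii), $\dom(A)=\dom(A_0^2)\subseteq\dom(A_0)=\bY$. Also $\bY\subseteq\dom(B_0)=\dom(B)$, and $\bY\hookrightarrow\bX$ with the graph norm of $A_0$.
\item For $x\in\dom(A_0^2)$ we have $B_0x\in\dom(A_0)$ by (ii), and $B_1x\in\dom(A_0)$ by (iv). Hence $Bx\in\bY\subseteq\dom(B)$, so $\dom(A)\subseteq\dom(B^2)$ and $B(\dom(A))\subseteq\bY$.
\item $B=B_0+B_1$ is a bounded perturbation of a group generator. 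So it is closed, generates a $C_0$-group, and has nonempty resolvent set.
\item For $A$, write $A=(A_0^2-B_0^2)+W_0$. By \eqref{2.1.18} with $\lambda=0$, $A_0^2-B_0^2=-A_0^+A_0^-$, which is invertible for suitable shifts. Closedness of $A$ on $\dom(A_0^2)$ would then follow from a relative-boundedness argument: $W_0$ is $A_0^2$-bounded, and I would expect to argue via the graph norm $\|x\|+\|Q_0(\lambda)x\|$ equivalent to the $\dom(A_0^2)$-norm.
\end{itemize}
I regard this as routine, though the closedness of $A$ needs some care.

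\textbf{Step 2: the decomposition.} Using $B^2=B_0^2+B_0B_1+B_1B_0+B_1^2$ on $\dom(A_0^2)$ and (v), i.e. $B_0B_1=B_1B_0+\widetilde B_1$ there, we get on $\dom(A_0^2)$
\[
A+B^2=A_0^2+W_0+2B_1B_0+\widetilde B_1+B_1^2 .
\]
Hence $G=G_0+V_0$ with
\[
V_0=\begin{bmatrix} -B_1|_{\dom(A_0)} & 0\\ W_0+2B_1B_0+\widetilde B_1+B_1^2 & -B_1\end{bmatrix},
\]
and the domains agree: $\dom(G)=\dom(A)\times\bY=\dom(A_0^2)\times\dom(A_0)=\dom(G_0)$.

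\textbf{Step 3: boundedness of $V_0$ on $\dom(A_0)\times\bX$.}
\begin{itemize}
\item The $(1,1)$ entry is bounded on $\dom(A_0)$ by (iv), and the $(2,2)$ entry is bounded on $\bX$.
\item The $(2,1)$ entry must extend to a bounded operator from $\dom(A_0)$ to $\bX$. Here is the main obstacle. As written, $W_0\in\cB(\dom(A_0),\bX)$ by hypothesis, and $B_1,\widetilde B_1,B_1^2$ are bounded. The term $B_1B_0$ is bounded from $\dom(A_0)$ to $\bX$ because $B_0$ is $A_0$-bounded: $\dom(A_0)\subseteq\dom(B_0)$ with both operators closed, so the closed graph theorem gives $B_0\in\cB(\dom(A_0),\bX)$.
\item Thus $V_0$ is defined on $\dom(A_0)\times\bX$ and bounded there. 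It is not only defined on $\dom(G_0)$, so $G=G_0+V_0$ with $\dom(G)=\dom(G_0)$ holds literally.
\end{itemize}

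\textbf{Step 4: conclusion.} By Lemma~\ref{l2.1}, $G_0$ generates the $C_0$-group $\{\cT_0(t)\}_{t\in\RR}$ on $\dom(A_0)\times\bX$. The bounded perturbation theorem, applied for $t\ge0$ and $t\le0$ (to $\pm G$), shows that $G=G_0+V_0$ generates a $C_0$-group on $\dom(A_0)\times\bX$. This is (ABY3), which completes (ABY).

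The step that needs the most care is verifying that every term of $A+B^2-A_0^2$ is bounded from $\dom(A_0)$ into $\bX$ and uses only the commutator relation (v) on $\dom(A_0^2)$. Second to that is the closedness of $A$ in Step 1.
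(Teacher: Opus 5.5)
Your proposal is correct and follows essentially the same route as the paper: verify (ABY1)--(ABY2) and $\rho(B)\neq\emptyset$ from (HAB)(ii), (iv), (vi); use the commutator relation (v) to write $G=G_0+V_0$ with $\dom(G)=\dom(G_0)$; get $V_0\in\mathcal{B}(\dom(A_0)\times\bX)$ from the closed graph theorem applied to $B_0|_{\dom(A_0)}$; and conclude by Lemma~\ref{l2.1} and bounded perturbation. The only addition is your check that $A$ is closed, which (ABY) formally requires but the paper's proof does not address; your sketch works once you note that $\lambda^2+2\lambda B_0+W_0$ is bounded on the intermediate space $\dom(A_0)$, hence $A_0^2$-bounded with relative bound $0$ (not merely $A_0^2$-bounded), so it can be absorbed against $\|Q_0(\lambda)x\|\simeq\|x\|_{\dom(A_0^2)}$.
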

\begin{proof}
	Since $B_0$ generates a $C_0$-group and $B_1\in\mathcal{B}(\bX)$ we know that $B=B_0+B_1$ generates a $C_0$-group, hence $\rho(B)\ne\emptyset$. From Hypothesis (HAB) (ii) we infer that $\dom(A_0^2)\subseteq\dom(B_0^2)$, thus $\dom(A) = \dom(A_0^2)$. Moreover, $\dom(B)=\dom(B_0)$ by Hypothesis (HAB)(vi). It follows that $\dom(A) = \dom(A_0^2) \subseteq \dom(A_0) = \bY$. Next, we fix $x \in \dom(A) = \dom(A_0^2)$. Then, $x,\,A_0 x \in \dom(A_0)$.
	From Hypothesis (HAB)(ii) and (iv) we obtain \begin{equation}\label{t2.3.1}
		B\,x = B_0\,x + B_1\,x \in \dom(A_0) + \dom(A_0) = \dom(A_0) = \bY.
	\end{equation}
	But $\bY \subseteq \dom(B_0)=\dom(B)$ by Hypothesis (HAB)(ii), so $x \in \dom(B^2)$.
	Hence $\dom(A) \subseteq \dom(B^2)$, proving (ABY1). From \eqref{t2.3.1} we infer $B(\dom(A)) \subseteq \bY \subseteq \dom(B_0)\dom(B)$, proving (ABY2).
	
	From Lemma~\ref{l2.1} we infer that  $G_0:\dom(A_0^2)\times\dom(A_0)\subseteq \dom(A_0)\times\bX\to\dom(A_0)\times\bX$ defined by $G_0=\left[\begin{smallmatrix}-B_0&I\\A_0^2&-B_0\end{smallmatrix}\right]$ generates a $C_0$-group on $\dom(A_0)\times\bX$. From Hypothesis (HAB)(v) we  obtain
	\begin{equation}\label{t2.3.2}
		B^2 x = B_0^2 x+2B_1 B_0 x+(\widetilde{B}_1+B_1^2)x\;\mbox{for any}\;x\in\dom(A_0^2).
	\end{equation}
	\begin{equation}\label{t2.3.3}
		(A+B^2)x=A_0^2x+W_0x+2B_1B_0x + (\widetilde{B}_1 + B_1^2)x\;\mbox{for any}\;x\in\dom(A_0^2).
	\end{equation}
	From \eqref{t2.3.3} we conclude that $G=G_0+V_0$, where $V_0:\dom(A_0)\times\bX\to\dom(A_0)\times\bX$ is defined by
	\begin{equation}\label{t2.3.4}
		V_0 = \begin{bmatrix}
			-(B_1)\big|_{\dom(A_0)} & 0 \\
			W_0 + \big(2\,B_1\,B_0 + (\widetilde{B}_1 + B_1^2)\big)\big|_{\dom(A_0)} & -B_1
		\end{bmatrix}.	
	\end{equation}	
Since $B_0$ is closed and $\dom(A_0) \subseteq \dom(B_0)$, we conclude that
$B_0\big|_{\dom(A_0)} \in \mathcal{B}(\dom(A_0), \bX)$ by the Closed Graph Theorem.
Since $B_1, \widetilde{B}_1 \in \mathcal{B}(\bX)$, $W_0 \in \mathcal{B}(\dom(A_0), \bX)$,
and $(B_1)\big|_{\dom(A_0)} \in \mathcal{B}(\dom(A_0))$, we infer
	$V_0 \in \mathcal{B}(\dom(A_0) \times \bX)$, thus $G$ generates a $C_0$-group on $\dom(A_0)\times\bX$, proving the theorem.
\end{proof}

\section{Mild Solutions of Damped Waves Equations}\label{s3}
In this section we study classical and mild solutions of the  initial value problem for the damped wave \eqref{AB}. 
Throughout this section we assume Hypotheses (ABY). First, we point out several immediate consequences of Hypothesis (ABY). Throughout this paper we use notation $L:=A+B^2$ for the linear operator with domain $\dom(L)=\dom(A)$.
\begin{remark}\label{r3.1} Assume (ABY1)-(ABY2). Using the Closed Graph Theorem we obtain the following.
\begin{itemize}
\item[(i)] Assumptions $\bY \hookrightarrow\bX$ and $\bY \subseteq \dom(B)$ imply 
\begin{equation}\label{r3.1.1}
B|_\bY \in \mathcal{B}(\bY, \bX).
\end{equation}
\item[(ii)] Assumptions $B(\dom(A)) \subseteq\bY$, that $B$ is closed, and $\bY \hookrightarrow \bX$ imply 
\begin{equation}\label{r3.1.2}
B|_{\dom(A)} \in \mathcal{B}(\dom(A), \bY).
\end{equation}\label{r3.1.3}
\item[(iii)] Since $\dom(A) \subseteq \bY$, $\bY \hookrightarrow\bX$, $\dom(A) \hookrightarrow \bX$, we infer that the inclusion $\dom(A)\hookrightarrow\bY$ is continuous. So, there exists $c_0 > 0$
such that 
\begin{equation}\label{r3.1.4}
\|x\|_\bY \leq c_0 \|x\|_{\dom(A)} \;\text{for any}\; x\in\dom(A).
\end{equation}
\end{itemize}
\end{remark}
In the next lemma we study the resolvent set of the linear operator $G$. We introduce the quadratic pencil 
\begin{equation}\label{defQlam}
	Q(\lambda):\dom(A)\subseteq\bX\to\bX\;\mbox{by}\; Q(\lambda) := \lambda^2 I + 2\lambda B - A,\;\mbox{for}\; \lambda\in\CC.
\end{equation}
\begin{lemma}\label{l3.2}
	Assume \textnormal{(AB1)--(AB2)}. Then $\lambda \in \rho(G)$ if and only if
	$Q(\lambda) := \lambda^2 I + 2\lambda B - A$ is invertible with bounded inverse.
\end{lemma}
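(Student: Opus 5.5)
We will argue directly with the block structure of $G=\left[\begin{smallmatrix}-B&I\\ L&-B\end{smallmatrix}\right]$, $L=A+B^2$, on $\bY\times\bX$ with $\dom(G)=\dom(A)\times\bY$. The plan is to solve $(\lambda-G)\binom{u}{v}=\binom{f}{g}$ for $(f,g)\in\bY\times\bX$, mimicking \eqref{2.1.20}--\eqref{2.1.22} from the proof of Lemma~\ref{l2.1}. The first row reads $(\lambda I+B)u-v=f$, so $v=(\lambda I+B)u-f$. Substituting into the second row, $-Lu+(\lambda I+B)v=g$, and using that $u\in\dom(A)\subseteq\dom(B^2)$ by (ABY1), we get
\[
(\lambda I+B)^2u-A u-B^2u=(\lambda I+B)f+g,
\]
that is, $Q(\lambda)u=(\lambda I+B)f+g$. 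Here $(\lambda I+B)f$ makes sense because $f\in\bY\subseteq\dom(B)$ by (ABY2). Both directions of the equivalence rest on this reduction.

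\emph{($\Leftarrow$).} Assume $Q(\lambda)^{-1}\in\mathcal{B}(\bX)$, with range $\dom(A)$. Define $u=Q(\lambda)^{-1}\big((\lambda I+B)f+g\big)\in\dom(A)$ and $v=(\lambda I+B)u-f$. By (ABY2), $Bu\in\bY$, so $v\in\bY$ and $(u,v)\in\dom(G)$. Reversing the computation above shows that this pair solves the system, and injectivity of $Q(\lambda)$ gives uniqueness. Hence $\lambda I-G$ is bijective from $\dom(G)$ onto $\bY\times\bX$.

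Boundedness of the inverse will follow from closedness: $G$ generates a group by (ABY3), so it is closed, and a closed bijective operator has a bounded inverse by the Closed Graph Theorem. Alternatively one can estimate directly. By Remark~\ref{r3.1}(i), $\|(\lambda I+B)f\|\lesssim\|f\|_\bY$. Since $Q(\lambda)^{-1}$ is bounded and $A$ is closed, $Q(\lambda)^{-1}\in\mathcal{B}(\bX,\dom(A))$. Remark~\ref{r3.1}(ii) and (iii) then give $\|v\|_\bY\lesssim\|u\|_{\dom(A)}+\|f\|_\bY$.

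\emph{($\Rightarrow$).} Assume $\lambda\in\rho(G)$. To prove injectivity of $Q(\lambda)$, suppose $Q(\lambda)u=0$ with $u\in\dom(A)$. Setting $v=(\lambda I+B)u\in\bY$, the reduction shows $(\lambda-G)\binom{u}{v}=0$, hence $u=0$. To prove surjectivity, take $g\in\bX$, put $f=0$ and $\binom{u}{v}=R(\lambda,G)\binom{0}{g}$. Then $Q(\lambda)u=g$, and
\[
Q(\lambda)^{-1}g=P_1R(\lambda,G)\binom{0}{g},
\]
where $P_1$ denotes the first-coordinate projection. This is bounded from $\bX$ into $\bY\hookrightarrow\bX$.

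The only delicate points are domain bookkeeping, namely that $v\in\bY$ and that $B^2u$ is defined. These are exactly what (ABY1)--(ABY2) supply; I read the hypothesis ``(AB1)--(AB2)'' in the statement as (ABY1)--(ABY2). For the bounded-inverse step in ($\Leftarrow$), closedness of $G$ is needed, either from (ABY3) or via a short direct argument from closedness of $A$ and $B$; I would use the explicit estimates above to avoid relying on (ABY3).
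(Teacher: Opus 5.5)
Your proof is correct and is essentially the paper's argument. It uses the same reduction of $(\lambda-G)\binom{u}{v}=\binom{f}{g}$ to $v=(\lambda I+B)u-f$, $Q(\lambda)u=(\lambda I+B)f+g$, and the same necessity proof via $R(\lambda,G)\binom{0}{g}$. The only variation is in the boundedness step of sufficiency: you get $Q^{-1}(\lambda)\in\mathcal{B}(\bX,\dom(A))$ from closedness of $A$ and then use Remark~\ref{r3.1}(ii)--(iii), while the paper proves $Q^{-1}(\lambda)\in\mathcal{B}(\bX,\bY)$ by a closed-graph argument and bounds each block of \eqref{3.2.5} directly.

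You are right to rely on the explicit estimates rather than on closedness of $G$, since the latter would import (ABY3), which is not among the lemma's hypotheses.
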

\begin{proof} First, we consider the equation
	\begin{equation}\label{3.2.1}
		\lambda \binom{u}{v} - G\binom{u}{v} = \binom{f}{g},
		\quad u \in \dom(A),\; v \in \bY,\; f \in \bY,\; g \in \bX.
	\end{equation}
	From the definition of $G$ in \eqref{eq:G} we immediately see that \eqref{3.2.1} is equivalent to the system
	\begin{equation}\label{3.2.3}
		v = (\lambda I + B)u - f,\; (\lambda^2 I+ 2\lambda B - A)u = (\lambda I+B)f + g. 
	\end{equation}
	
	\noindent\textit{Necessity.}
	Assume that $\lambda \in \rho(G)$.
	Fix $u \in \Ker(Q(\lambda))=\dom(A)$. 
	Set $v = \lambda u + Bu \in\bY$ (by (ABY2)).
	From the equivalence of \eqref{3.2.1} and \eqref{3.2.3} we infer
	\begin{equation}\label{l3.2.4}
		\lambda \binom{u}{v} - G\binom{u}{v} = \binom{0}{0}.
	\end{equation}
	Since $\lambda\in\rho(G)$ we conclude that $u=0$. Hence $\Ker(Q(\lambda)) = \{0\}$.
	Next, we fix $g \in\bX$ and set $\binom{u}{v} = R(\lambda, G)\binom{0}{g}\in\dom(G)=\dom(A)\times\bY$. Thus, $(\lambda I_{\bY\times\bX}-G)\binom{u}{v}=\binom{0}{g}$. By the equivalence of \eqref{3.2.1} and \eqref{3.2.3}, 
	$v = (\lambda I+B)u$ and  $Q(\lambda)u = g$. Thus, $\Range(Q(\lambda)) = \bX$.
	We conclude that $Q(\lambda)$ is invertible. Moreover, by solving for $u$ and $v$ in \eqref{3.2.3} we have
	\begin{equation}\label{3.2.5}
		R(\lambda, G) = \begin{bmatrix}
			Q^{-1}(\lambda)(\lambda I+B)|_\bY & Q^{-1}(\lambda) \\
			\bigl((\lambda I+B)Q^{-1}(\lambda)(\lambda I+B) - I\bigr)|_\bY & (\lambda I+B)Q^{-1}(\lambda)
		\end{bmatrix}.
	\end{equation}
	To prove that the inverse $Q^{-1}(\lambda)$ is bounded we proceed as follows. From \eqref{3.2.5} one can readily check that
	\begin{equation}\label{3.2.6}
		R(\lambda, G)\binom{0}{x} = \begin{pmatrix} Q^{-1}(\lambda)x \\ (\lambda+B)Q^{-1}(\lambda)x \end{pmatrix}
		\;\text{for any}\; x \in \bX.
	\end{equation}
	Thus,
	\begin{equation}\label{3.2.7}
		\|Q^{-1}(\lambda)x\|_\bY \leq \left\| R(\lambda,G)\binom{0}{x} \right\|_{\bY\times \bX}
		\leq \|R(\lambda,G)\| \|x\|\;\text{for any}\; x \in \bX.
	\end{equation}
	Since $\bY \hookrightarrow\bX$ there exists $c_1 > 0$ such that
	$\|y\| \leq c_1 \|y\|_\bY$ for any $y \in \bY$. From \eqref{3.2.7} we conclude that 
	\begin{equation}\label{3.2.8}
		\|Q^{-1}(\lambda)x\| \leq c_1 \|Q^{-1}(\lambda)x\|_\bY \leq c_1 \|R(\lambda,G)\| \|x\|\;\text{for any}\; x \in \bX,
	\end{equation}
	Hence $Q^{-1}(\lambda) \in \mathcal{B}(\bX)$, proving the Necessity. 
	
	\noindent\textit{Sufficiency.} Assume $Q(\lambda) = \lambda^2 I + 2\lambda B - A$ is invertible and $Q^{-1}(\lambda) \in \mathcal{B}(\bX)$. Let $\binom{u}{v} \in \Ker(\lambda I_{\bY\times\bX} - G)$. Since $\dom(G)=\dom(A)\times\bY$ we have $u \in \dom(A)$, $v \in \bY$.
	Using again the equivalence of \eqref{3.2.1} and \eqref{3.2.3} it follows that
	$v = (\lambda I+B)u$ and  $Q(\lambda)u = 0 $, which implies that $u = 0$ and  $v = 0$. Fix $\binom{f}{g} \in\bY\times\bX$. Set $u = Q^{-1}(\lambda)\big((\lambda I+B)f + g\big)\in \dom(A)$ and
	$v = (\lambda I+B)u - f \in\bY$.  From\eqref{3.2.3} we obtain that
	$(\lambda I_{\bY\times\bX} - G)\binom{u}{v} = \binom{f}{g}$.
	Thus, $\Range(\lambda I_{\bY\times\bX} - G) = \bY \times \bX$, and so $\lambda I_{\bY\times \bX} - G$
	is invertible. Moreover, $(\lambda I_{\bY\times\bX} - G)^{-1}$ is given by the RHS of \eqref{3.2.5}.
	
	Next, we prove that $(\lambda I_{\bY\times\bX} - G)^{-1}$ is bounded on $\bY\times\bX$.
	
	We claim that  $Q^{-1}(\lambda) \in \mathcal{B}(\bX, \bY)$. First, we note that $\Range(Q^{-1}(\lambda)) = \dom(Q(\lambda)) = \dom(A) \subseteq \bY$.
	Since $\bX$ and $\bY$ are Banach spaces, it is enough to show that $Q^{-1}(\lambda)$ has closed graph
	in $\bX \times \bY$. Assume $\{x_n\}_{n \geq 1} \subset \bX$, $x \in\bX$, $y \in \bY$ are such that
	\begin{equation}\label{3.2.10}
		x_n \xrightarrow{\bX} x, \; Q^{-1}(\lambda)x_n \xrightarrow{\bY} y\;\mbox{as}\; n\to\infty.
	\end{equation}
	From \eqref{3.2.10} and since $Q^{-1}(\lambda) \in \mathcal{B}(\bX)$ we have $ Q^{-1}(\lambda)x_n \xrightarrow{\bX} Q^{-1}(\lambda)x$ as $n\to\infty$. Similarly, from \eqref{3.2.10} and since
	$\bY \hookrightarrow\bX$ it follows that $ Q^{-1}(\lambda)x_n \xrightarrow{\bX} y$ as $n\to\infty$. Hence, 
	$Q^{-1}(\lambda)x = y$, proving our claim. 
	
	Since $(\lambda I+B)$ is a closed linear operator, $Q^{-1}(\lambda) \in \mathcal{B}(\bX)$, $\Range(Q^{-1}(\lambda)) = \dom(A) \subseteq \dom(B)$ we infer that $(\lambda I+B)Q^{-1}(\lambda) \in \mathcal{B}(\bX)$. From \eqref{r3.1.1} we obtain $(\lambda I+B)|_\bY \in \mathcal{B}(\bY,\bX)$. Since $Q^{-1}(\lambda) \in \mathcal{B}(\bX, \bY)$ we conclude that $Q^{-1}(\lambda)(\lambda I+B)|_\bY \in \mathcal{B}(\bY)$. Finally, 
	since $(\lambda I+B)|_\bY \in \mathcal{B}(\bY,\bX)$ and $(\lambda I+B)Q^{-1}(\lambda) \in \mathcal{B}(\bX)$ we infer that $\bigl((\lambda I+B)Q^{-1}(\lambda)(\lambda I+B) - I\bigr)|_\bY\in \mathcal{B}(\bY,\bX)$. From \eqref{3.2.5} we conclude that 
	$(\lambda I_{\bY\times\bX} - G)^{-1}\in\mathcal{B}(\bY\times\bX)$, proving the lemma.
\end{proof}
Next,  we summarize the basic estimates for the block-matrix representation of the $C_0$-group $\{\cT(t)\}_{t\in\RR}$. Indeed, from (ABY3) we have  
\begin{equation}\label{cal-T}
	\cT(t) = \begin{bmatrix} T_{11}(t) & T_{12}(t) \\ T_{21}(t) & T_{22}(t) \end{bmatrix}, \; t \in \mathbb{R},
\end{equation}
where,
\begin{equation}\label{cal-T-boundedness}
	T_{11}(t) \in \mathcal{B}(\bY),\; T_{22}(t) \in \mathcal{B}(\bX),\;
	T_{12}(t) \in \mathcal{B}(\bX,\bY),\; T_{21}(t) \in \mathcal{B}(\bY,\bX)\;\mbox{for any}\; t\in\RR,
\end{equation}
\begin{equation}\label{cal-T-continuity}
	T_{22}(\cdot)x,\; T_{21}(\cdot)y \in \mathcal{C}(\mathbb{R},\bX),\;
	T_{11}(\cdot)y,\; T_{12}(\cdot)x \in \mathcal{C}(\mathbb{R},\bY)\;\mbox{for any}\; t\in\RR,\, x\in\bX,\, y\in\bY,
\end{equation}
\begin{equation}\label{growth-cal-T}
	\|\cT(t)\|_{\mathcal{B}(\bY\times\bX)} \leq M e^{\omega|t|}, \quad t \in \mathbb{R}.
\end{equation}
From \eqref{cal-T} and \eqref{growth-cal-T} we immediately obtain
\begin{align}\label{growth-Tij}
	\|T_{11}(t)y\|_\bY &\leq M e^{\omega|t|}\|y\|_\bY,\; 
	\|T_{12}(t)x\|_\bY\leq M e^{\omega|t|}\|x\|, \nonumber\\
	\|T_{21}(t)y\| &\leq M e^{\omega|t|}\|y\|_\bY,\;
	\|T_{22}(t)x\| \leq M e^{\omega|t|}\|x\| 
\end{align}
for any $t \in \mathbb{R}$, $x \in \bX$, $y \in \bY$.
\begin{lemma}\label{l3.3}
	Assume \textnormal{(ABY1)--(ABY3)}. Then, $Q(\lambda) = \lambda^2 I + 2\lambda B - A$ is invertible with bounded inverse whenever $|\operatorname{Re}\lambda| > \omega$. Moreover, there exists $\widetilde{M} > 0$ such that
	\begin{equation}\label{Q-inverse-X}
		\|\lambda Q^{-1}(\lambda)x\|+\|BQ^{-1}(\lambda)x\|+\|Q^{-1}(\lambda)x\|_\bY\leq \frac{\widetilde{M}}{|\operatorname{Re}\lambda - \omega|}\|x\|,
	\end{equation}
	\begin{equation}\label{Q-inverse-Y}
		\|\lambda Q^{-1}(\lambda)y\|_\bY +\|Q^{-1}(\lambda)By\|_\bY+\|BQ^{-1}(\lambda)By\|+\|(\lambda^2 Q^{-1}(\lambda) - I)y\|\leq \frac{\widetilde{M}}{|\operatorname{Re}\lambda - \omega|}\|y\|_\bY
	\end{equation}
	for any $x \in\bX$, $y \in \bY$, whenever $|\operatorname{Re}\lambda| > \omega$.
\end{lemma}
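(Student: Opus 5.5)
Since $\{\cT(t)\}_{t\in\RR}$ is a $C_0$-group on $\bY\times\bX$ with the bound \eqref{growth-cal-T}, both $G$ and $-G$ generate $C_0$-semigroups of type $\omega$. By the Hille--Yosida estimate, $\{\lambda:|\operatorname{Re}\lambda|>\omega\}\subseteq\rho(G)$ and
\begin{equation*}
\|R(\lambda,G)\|_{\mathcal{B}(\bY\times\bX)}\le \frac{M}{|\operatorname{Re}\lambda|-\omega}.
\end{equation*}
For $\operatorname{Re}\lambda<-\omega$ this follows from $R(\lambda,G)=-R(-\lambda,-G)$ together with $\|e^{-tG}\|\le Me^{\omega t}$ for $t\ge0$. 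Lemma~\ref{l3.2}, whose proof uses only (ABY1)--(ABY2), then shows that $Q(\lambda)$ is invertible with $Q^{-1}(\lambda)\in\mathcal{B}(\bX)$ for all such $\lambda$. It remains to read off the estimates block by block from \eqref{3.2.5}. I will write the denominator as $|\operatorname{Re}\lambda|-\omega$, which is what the statement means.

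For \eqref{Q-inverse-X}, I apply $R(\lambda,G)$ to $\binom{0}{x}$. By \eqref{3.2.6},
\begin{equation*}
\|Q^{-1}(\lambda)x\|_\bY+\|(\lambda+B)Q^{-1}(\lambda)x\|\le \frac{M}{|\operatorname{Re}\lambda|-\omega}\|x\|.
\end{equation*}
This already controls $\|Q^{-1}(\lambda)x\|_\bY$. It does not separate $\lambda Q^{-1}(\lambda)x$ from $BQ^{-1}(\lambda)x$, and this separation is the main obstacle. To handle it I use \eqref{r3.1.1}: $\|BQ^{-1}(\lambda)x\|\le\|B\|_{\mathcal{B}(\bY,\bX)}\|Q^{-1}(\lambda)x\|_\bY$, which is bounded by the same right-hand side. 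Subtracting this from the previous bound gives the estimate for $\lambda Q^{-1}(\lambda)x$, and the three estimates together give \eqref{Q-inverse-X}.

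For \eqref{Q-inverse-Y}, I apply $R(\lambda,G)$ to $\binom{y}{0}$ with $y\in\bY$. The two components are $Q^{-1}(\lambda)(\lambda+B)y\in\bY$ and $\bigl((\lambda+B)Q^{-1}(\lambda)(\lambda+B)-I\bigr)y\in\bX$, and both are bounded by $M\|y\|_\bY/(|\operatorname{Re}\lambda|-\omega)$. In the first component, $\|Q^{-1}(\lambda)By\|_\bY$ is controlled by \eqref{Q-inverse-X} applied to $x=By$ together with $\|By\|\le C\|y\|_\bY$. Subtracting then bounds $\|\lambda Q^{-1}(\lambda)y\|_\bY$.

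For the second component I expand
\begin{equation*}
(\lambda+B)Q^{-1}(\lambda)(\lambda+B)-I=\bigl(\lambda^2Q^{-1}(\lambda)-I\bigr)+\lambda Q^{-1}(\lambda)B+B\lambda Q^{-1}(\lambda)+BQ^{-1}(\lambda)B.
\end{equation*}
The term $BQ^{-1}(\lambda)By$ has norm at most $\|B\|_{\mathcal{B}(\bY,\bX)}\|Q^{-1}(\lambda)By\|_\bY$, which is already controlled. The term $B\lambda Q^{-1}(\lambda)y$ is bounded through $\|\lambda Q^{-1}(\lambda)y\|_\bY$, which is also controlled. For $\lambda Q^{-1}(\lambda)By$ I use \eqref{Q-inverse-X} applied to $By$. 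Subtracting these three terms from the second component yields the bound on $\|(\lambda^2Q^{-1}(\lambda)-I)y\|$. Taking $\widetilde M$ to be a multiple of $M(1+\|B\|_{\mathcal{B}(\bY,\bX)})^2$ that absorbs all the constants gives both inequalities.
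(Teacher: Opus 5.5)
Your proof is correct and follows the paper's argument essentially step by step. You use the resolvent bound for $G$ on $\{|\operatorname{Re}\lambda|>\omega\}$ and Lemma~\ref{l3.2}, then read off the blocks of \eqref{3.2.5} applied to $\binom{0}{x}$ and $\binom{y}{0}$, separating terms via $B|_{\bY}\in\mathcal{B}(\bY,\bX)$ and the same four-term expansion of $(\lambda+B)Q^{-1}(\lambda)(\lambda+B)-I$. Your denominator $|\operatorname{Re}\lambda|-\omega$ is the right one: for $\operatorname{Re}\lambda<-\omega$ the growth bound \eqref{growth-cal-T} only gives $\|R(\lambda,G)\|\le M/(|\operatorname{Re}\lambda|-\omega)$, not the $M/|\operatorname{Re}\lambda-\omega|$ written in the paper.
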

\begin{proof}
	From \eqref{growth-cal-T} we infer that
	\begin{equation}\label{3.3.1}
		\{\lambda \in \mathbb{C} : |\operatorname{Re}\lambda| > \omega\} \subseteq \rho(G),\; 
		\|R(\lambda, G)\|_{\mathcal{B}(\bY\times\bX)} \leq \frac{M}{|\operatorname{Re}\lambda - \omega|},
		\;\text{whenever }\; |\operatorname{Re}\lambda| > \omega.
	\end{equation}
	By Lemma~\ref{l3.2} and \eqref{3.3.1} it follows that $Q(\lambda)$ is invertible with bounded inverse, whenever $|\operatorname{Re}\lambda| > \omega$ and \eqref{3.2.5} holds. We fix such a $\lambda$ and $x \in \bX$, $y \in \bY$. From \eqref{3.2.5} and \eqref{3.3.1} we obtain 
	\begin{equation}\label{3.3.3}
		\|Q^{-1}(\lambda)x\|_\bY + \|(\lambda I+B)Q^{-1}(\lambda)x\|
		= \left\|R(\lambda,G)\binom{0}{x}\right\|_{\bY\times\bX}
		\leq \frac{M}{|\operatorname{Re}\lambda - \omega|}\|x\|. 
	\end{equation}
It follows from \eqref{r3.1.1} and \eqref{3.3.3} that
	\begin{equation}\label{3.3.4}
		\|BQ^{-1}(\lambda)x\|
		= \|B|_\bY Q^{-1}(\lambda)x\| \leq \|B|_\bY\|\|Q^{-1}(\lambda)x\|_\bY
		\leq \frac{M\|B|_\bY\|}{|\operatorname{Re}\lambda - \omega|}\|x\|,
	\end{equation}
	\begin{equation}\label{3.3.5}
		\|\lambda Q^{-1}(\lambda)x\|
		\leq \|(\lambda I+B)Q^{-1}(\lambda)x\| + \|BQ^{-1}(\lambda)x\|
		\leq \frac{M(1+\|B|_\bY\|)}{|\operatorname{Re}\lambda - \omega|}\|x\|.
	\end{equation}
	Estimate \eqref{Q-inverse-X} follows from \eqref{3.3.3}, \eqref{3.3.4} and \eqref{3.3.5} by choosing $\widetilde{M}>2M(1+\|B|_\bY\|)$. Using again \eqref{3.2.5} and \eqref{3.3.1} we have 
	\begin{equation}\label{3.3.6}
		\|Q^{-1}(\lambda)(\lambda I+B)y\|_\bY + \|(\lambda I+B)Q^{-1}(\lambda)(\lambda I+B)y - y\|
		= \left\|R(\lambda,G)\binom{y}{0}\right\|_{\bY\times \bX}
		\leq \frac{M}{|\operatorname{Re}\lambda - \omega|}\|y\|_\bY. 
	\end{equation}
	From \eqref{r3.1.1} and \eqref{3.3.3} we infer 
	\begin{equation}\label{3.3.7}
		\|Q^{-1}(\lambda)By\|_\bY \leq \frac{M}{|\operatorname{Re}\lambda - \omega|}\|By\|
		= \frac{M}{|\operatorname{Re}\lambda - \omega|}\|B|_\bY y\|
		\leq \frac{M\|B|_\bY\|}{|\operatorname{Re}\lambda - \omega|}\|y\|_\bY.
	\end{equation}
	Similarly, from \eqref{r3.1.1} and \eqref{3.3.4} we obtain
	\begin{equation}\label{3.3.8}
		\|BQ^{-1}(\lambda)By\|
		\leq \frac{M\|B|_\bY\|}{|\operatorname{Re}\lambda - \omega|}\|By\|
		\leq \frac{M\|B|_\bY\|^2}{|\operatorname{Re}\lambda - \omega|}\|y\|_\bY.
	\end{equation}
It follows	by \eqref{3.3.6} and \eqref{3.3.7} that
	\begin{equation}\label{3.3.9}
		\|\lambda Q^{-1}(\lambda)y\|_\bY
		\leq \|Q^{-1}(\lambda)(\lambda I+B)y\|_\bY + \|Q^{-1}(\lambda)By\|_\bY
		\leq \frac{M(1+\|B|_\bY\|)}{|\operatorname{Re}\lambda - \omega|}\|y\|_\bY.
	\end{equation}
	By \eqref{r3.1.1} and \eqref{3.3.9} we estimate 
	\begin{equation}\label{3.3.10}
		\|\lambda Q^{-1}(\lambda)By\|
		\leq \frac{M(1+\|B|_\bY\|)}{|\operatorname{Re}\lambda - \omega|}\|By\|\leq \frac{M(1+\|B|_\bY\|)\|B|_\bY\|}{|\operatorname{Re}\lambda - \omega|}\|y\|_\bY.
	\end{equation}
	From \eqref{r3.1.1} and \eqref{3.3.9} we have
	\begin{equation}\label{3.3.11}
		\|\lambda BQ^{-1}(\lambda)y\|
		= \|B|_\bY(\lambda Q^{-1}(\lambda)y)\| \leq \|B|_\bY\|\|\lambda Q^{-1}(\lambda)y\|_\bY
		\leq \frac{M(1+\|B|_\bY\|)\|B|_\bY\|}{|\operatorname{Re}\lambda - \omega|}\|y\|_\bY. 
	\end{equation}
	Using \eqref{3.3.6}, \eqref{3.3.8}, \eqref{3.3.10} and \eqref{3.3.11} we conclude that 
	\begin{align}\label{3.3.12}
		\|(\lambda^2 Q^{-1}(\lambda) - I)y - y\|
		&\leq \|(\lambda+B)Q^{-1}(\lambda)(\lambda+B)y - y\|
		+ \|\lambda Q^{-1}(\lambda)By\| \nonumber\\
		&\qquad + \|\lambda BQ^{-1}(\lambda)y\| + \|BQ^{-1}(\lambda)By\| \nonumber\\
		&\leq \frac{M}{|\operatorname{Re}\lambda - \omega|}\|y\|_\bY
		\Bigl(1 + 2\|B|_\bY\|(1+\|B|_\bY\|) + \|B|_\bY\|^2\Bigr)
	\end{align}
	The estimate \eqref{Q-inverse-Y} follows from \eqref{3.3.7}, \eqref{3.3.8}, \eqref{3.3.9} and \eqref{3.3.12}
\end{proof}
\noindent\textbf{Classical solutions of the second order damped equation.}
The goal of this subsection is to prove the existence and uniqueness of classical solutions of \eqref{AB}. We start by recalling the definition 
\begin{definition}\label{d3.1}
	A function $u\in\mathcal{C}^2(\RR,\bX)$ is called a classical solution of the initial value problem \eqref{AB} if $u(t)\in\dom(A)$, $u'(t)\in\dom(B)$ for any $t\in\RR$, $Au(\cdot)\in\mathcal{C}(\RR,\bX)$ and $u$ satisfies  \eqref{AB}.
\end{definition}
\begin{lemma}\label{l3.4}
	Assume \textnormal{(ABY1)--(ABY3)}. If $x \in \dom(A)$, $y \in \bY$ and
	$\binom{u}{v} = \cT(\cdot)\binom{x}{y}$, then
	\begin{itemize}
		\item[(i)] $u \in \mathcal{C}^2(\mathbb{R},\bX) \cap \mathcal{C}^1(\mathbb{R},\bY) \cap \mathcal{C}(\mathbb{R},\dom(A))$;
		\item[(ii)] $u'' + 2Bu' = Au$.
	\end{itemize}
\end{lemma}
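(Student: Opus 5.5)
Since $\binom{x}{y}\in\dom(A)\times\bY=\dom(G)$, standard $C_0$-group theory applied to (ABY3) gives that $t\mapsto\cT(t)\binom{x}{y}$ lies in $\mathcal{C}^1(\RR,\bY\times\bX)$, takes values in $\dom(G)$, satisfies
\[
\tfrac{\rmd}{\rmd t}\tbinom{u}{v}=G\tbinom{u}{v}=\cT(\cdot)G\tbinom{x}{y},
\]
and that $t\mapsto G\binom{u(t)}{v(t)}$ is continuous into $\bY\times\bX$. Reading off components, $u\in\mathcal{C}^1(\RR,\bY)$, $v\in\mathcal{C}^1(\RR,\bX)$, $u(t)\in\dom(A)$, $v(t)\in\bY$, and
\[
u'=-Bu+v \ \text{in } \bY,\qquad v'=Lu-Bv \ \text{in } \bX,
\]
where $L=A+B^2$. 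Continuity of $G\binom{u}{v}$ into $\bY\times\bX$ gives that $-Bu+v\in\mathcal{C}(\RR,\bY)$ and $Lu-Bv\in\mathcal{C}(\RR,\bX)$.

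The first goal is $u\in\mathcal{C}(\RR,\dom(A))$. The natural route is through the graph norm of $G$. Since $G$ is closed on $\bY\times\bX$, the graph norm of $G$ on $\dom(A)\times\bY$ is a complete norm, and $\binom{u}{v}$ is continuous in it. I would then show that this graph norm is equivalent to the norm of $\dom(A)\times\bY$ via the closed graph theorem. First, $G:\dom(A)\times\bY\to\bY\times\bX$ is bounded: $B|_{\dom(A)}\in\mathcal{B}(\dom(A),\bY)$ by \eqref{r3.1.2}, $B|_\bY\in\mathcal{B}(\bY,\bX)$ by \eqref{r3.1.1}, and $L=A+B^2$ is bounded from $\dom(A)$ to $\bX$ because $B^2|_{\dom(A)}=B|_\bY B|_{\dom(A)}$. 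Together with \eqref{r3.1.4}, this shows the identity map from $\dom(A)\times\bY$ (with its norm) into the $G$-graph norm is bounded. Both norms are complete, so the inverse is bounded too by the open mapping theorem. Hence $u\in\mathcal{C}(\RR,\dom(A))$ and $v\in\mathcal{C}(\RR,\bY)$. I expect this norm equivalence to be the main technical point.

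Next I would get second derivatives. Since $u\in\mathcal{C}^1(\RR,\bY)$ and $B|_\bY\in\mathcal{B}(\bY,\bX)$, we have $Bu\in\mathcal{C}^1(\RR,\bX)$ with $(Bu)'=Bu'$. Also $v\in\mathcal{C}^1(\RR,\bX)$. Differentiating $u'=-Bu+v$ in $\bX$ then shows $u\in\mathcal{C}^2(\RR,\bX)$ with
\[
u''=-Bu'+v'.
\]
Substituting $v'=Lu-Bv$ and $v=u'+Bu$, where $u'(t)\in\bY\subseteq\dom(B)$ and $Bu(t)\in\bY$ by (ABY2), gives
\[
u''=-Bu'+Au+B^2u-Bu'-B^2u=Au-2Bu'.
\]
This proves (ii). All the operator compositions are legitimate because $u(t)\in\dom(A)\subseteq\dom(B^2)$ by (ABY1).
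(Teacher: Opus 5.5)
Your proof is correct and follows the same outline as the paper's proof: read off $u'=-Bu+v$ and $v'=Lu-Bv$ from the group trajectory, show $Bu\in\mathcal{C}^1(\RR,\bX)$ with $(Bu)'=Bu'$, and differentiate $u'=-Bu+v$. Two small differences favor your version. First, the paper simply asserts that $\binom{u}{v}$ is continuous into $\dom(A)\times\bY$, and your open-mapping argument (equivalence of the $G$-graph norm with the $\dom(A)\times\bY$ norm) actually justifies this. Second, the paper obtains $(Bu)'=Bu'$ from its auxiliary Lemma~\ref{App1}, which needs $\rho(B)\neq\emptyset$, whereas you get it immediately from $u\in\mathcal{C}^1(\RR,\bY)$ and $B|_\bY\in\mathcal{B}(\bY,\bX)$.
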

\begin{proof} First, we recall the notation $L = A + B^2$. 
	Fix $x \in \dom(A)$, $y \in\bY$. From hypothesis (ABY3) we have $\binom{x}{y} \in \dom(G)$, which implies that
	\begin{equation}\label{3.4.1}
		\binom{u}{v} \in \mathcal{C}^1(\mathbb{R}, \bY\times\bX) \cap \mathcal{C}(\mathbb{R},\dom(A)\times \bY),
	\end{equation}
	\begin{equation}\label{3.4.2}
		\begin{cases} u'(t) = -Bu(t) + v(t) \\ v'(t) = Lu(t) - Bv(t) \end{cases}
		\quad \text{for any } t \in \mathbb{R}. 
	\end{equation}
	The smoothness property \eqref{3.4.1} is equivalent to 
	\begin{equation}\label{3.4.3}
		u \in \mathcal{C}^1(\mathbb{R},\bY) \cap \mathcal{C}(\mathbb{R},\dom(A)),\; v \in \mathcal{C}^1(\mathbb{R},\bX) \cap \mathcal{C}(\mathbb{R},\bY). 
	\end{equation}
	From \eqref{r3.1.1}, \eqref{3.4.1} and \eqref{3.4.2} we obtain
	\begin{equation}\label{3.4.4}
		Bu(\cdot) = v - u' \in \mathcal{C}(\mathbb{R},\bY) \subset \mathcal{C}(\mathbb{R},\bX),\;\mbox{thus,}\;B^2u(\cdot) = B|_\bY(Bu(\cdot)) \in \mathcal{C}(\mathbb{R},\bX).
	\end{equation}
	From \eqref{3.4.1} we know that $u(t)\in\dom(A)$ and $v(t)\in\bY$, which implies that $u'(t)\in\bY\subseteq \dom(B)$ for any $t\in\RR$. Since $u' \in \mathcal{C}(\mathbb{R},\bY)$ by \eqref{3.4.3} and $B|_\bY \in \mathcal{B}(\bY,\bX)$ by \eqref{r3.1.1} it follows that  $Bu'(\cdot) = B|_\bY u'(\cdot) \in \mathcal{C}(\mathbb{R},\bX)$. We note that all hypotheses of Lemma~\ref{App1} are satisfied, which proves that $Bu(\cdot) \in \mathcal{C}^1(\mathbb{R},\bX)$ and $(Bu(\cdot))' = Bu'(\cdot)$. We conclude that $u' = -Bu(\cdot) + v \in \mathcal{C}^1(\mathbb{R},\bX)$, hence $u\in \mathcal{C}^2(\mathbb{R},\bX)$. This fact together with \eqref{3.4.3} prove
	assertion (i). From \eqref{3.4.2} and since $(Bu(\cdot))' = Bu'(\cdot)$, as shown above, we see that (ii) holds.
\end{proof}
We are now ready to prove the existence and uniqueness of classical solutions of \eqref{AB}. Recall \eqref{cal-T}.

\begin{theorem}\label{t3.1}
	Assume \textnormal{(ABY1)--(ABY3)}. If $x \in \dom(A)$, $y \in \bY$, then the initial value problem \eqref{AB} 
	has a unique classical solution given by
	\begin{equation}\label{classical-solution}
		u(t) = T_{11}(t)x + T_{12}(t)(y+Bx), \quad t \in \mathbb{R}.
	\end{equation}
\end{theorem}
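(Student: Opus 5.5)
Existence will follow from Lemma~\ref{l3.4}, and uniqueness from the uniqueness of classical solutions of the abstract Cauchy problem for the generator $G$ in (ABY3).

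\emph{Existence.} Fix $x\in\dom(A)$ and $y\in\bY$. By (ABY2), $Bx\in\bY$, so $\binom{x}{y+Bx}\in\dom(A)\times\bY=\dom(G)$. Set $\binom{u}{v}=\cT(\cdot)\binom{x}{y+Bx}$. Then by \eqref{cal-T},
\[
u(t)=T_{11}(t)x+T_{12}(t)(y+Bx).
\]
Applying Lemma~\ref{l3.4} with the initial data $(x,y+Bx)$ gives $u\in\mathcal{C}^2(\RR,\bX)\cap\mathcal{C}^1(\RR,\bY)\cap\mathcal{C}(\RR,\dom(A))$ and $u''+2Bu'=Au$. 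This yields the regularity required in Definition~\ref{d3.1}:
\begin{itemize}
\item $u(t)\in\dom(A)$ for every $t$;
\item $u'(t)\in\bY\subseteq\dom(B)$;
\item $Au\in\mathcal{C}(\RR,\bX)$, since $u\in\mathcal{C}(\RR,\dom(A))$ and $A\in\mathcal{B}(\dom(A),\bX)$.
\end{itemize}
For the initial conditions, $u(0)=x$. From the first line of \eqref{3.4.2}, $u'(0)=-Bu(0)+v(0)=-Bx+y+Bx=y$.

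\emph{Uniqueness.} By linearity it suffices to show that a classical solution $u$ with $x=y=0$ vanishes. Set $v=u'+Bu$ and show that $w=\binom{u}{v}$ is a classical solution of $w'=Gw$ in $\bY\times\bX$ with $w(0)=0$; uniqueness for $C_0$-group generators then gives $w\equiv0$, hence $u\equiv0$. This regularity check is the main obstacle, since Definition~\ref{d3.1} only gives $u\in\mathcal{C}^2(\RR,\bX)$, $u(t)\in\dom(A)$, $u'(t)\in\dom(B)$ and $Au$ continuous in $\bX$. It requires four facts:
\begin{enumerate}
\item $w(t)\in\dom(G)$, i.e.\ $v(t)\in\bY$;
\item $w\in\mathcal{C}^1(\RR,\bY\times\bX)$;
\item $u'=-Bu+v$, which holds by definition of $v$;
\item $v'=Lu-Bv$, which is formally $u''+Bu'+(Bu)'=Au+B^2u-Bu'-B^2u$, i.e.\ $u''+2Bu'=Au$, provided $(Bu)'=Bu'$.
\end{enumerate}
I would not try to verify these directly. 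Instead I would use a weaker notion: mild (integrated) solutions of the Cauchy problem for $G$ are also unique.

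\emph{Integrated argument.} Integrate the equation $u''+2Bu'=Au$ (with zero data) from $0$ to $t$. Closedness of $B$ gives $\int_0^tu'\,\rmd s=u(t)\in\dom(B)$, so the term $2B\int_0^t u'\,\rmd s=2Bu(t)$ makes sense. Closedness of $A$ gives $\int_0^tu\,\rmd s\in\dom(A)$. Then use \eqref{Q-inverse-X} and Lemma~\ref{l3.2}. A cleaner route is duality: for $\lambda$ with $|\operatorname{Re}\lambda|>\omega$, $Q(\lambda)$ is invertible with bounded inverse, and one can take the Laplace transform after weighting by an exponential cutoff. Since $u$ need not be exponentially bounded, I would instead apply the standard trick: fix $s$ and differentiate $\tau\mapsto \cT(s-\tau)\binom{\int_0^\tau u}{\int_0^\tau v}$. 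If $\binom{\int_0^\tau u}{\int_0^\tau v}$ lies in $\dom(G)$ and satisfies the integrated equation $\int_0^\tau w=\,$ $G^{-1}$-type relation $W(\tau)=G\int_0^\tau W$, uniqueness of mild solutions follows. Verifying that the integrated pair lies in $\dom(A)\times\bY$ is the delicate point, where (ABY1)--(ABY2) and closed-graph arguments (Remark~\ref{r3.1}) are used.
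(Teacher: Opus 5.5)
Your existence argument is correct and is the paper's: you feed $\binom{x}{y+Bx}\in\dom(G)$ into $\cT$, apply Lemma~\ref{l3.4}, and read off $u'(0)=-Bx+(y+Bx)=y$.

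\textbf{Uniqueness is not proved.} The route you settle on is sound: integrate $W=\binom{u}{u'+Bu}$ once and use uniqueness for the generator $G$. It is a leaner version of the paper's argument. The paper integrates twice, setting $w(t)=\int_0^t(t-s)u(s)\,\rmd s\in\mathcal{C}^2(\RR,\dom(A))$, and checks that $\binom{w}{w'+Bw}\in\mathcal{C}^1(\RR,\dom(A)\times\bY)$ solves $W'=GW$, $W(0)=0$. But your proposal stops exactly at the step you call the delicate point. You never show that the integrated pair lies in $\dom(G)$, nor that $G$ maps it back to $W(t)$. The Laplace-transform and duality detours contribute nothing and can be dropped.

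\textbf{The integration step is also mis-justified.} What you need is $\int_0^t Bu'(s)\,\rmd s=Bu(t)$. Closedness of $B$ gives this only once you know $Bu'(\cdot)$ is continuous. Definition~\ref{d3.1} does not state this, but the equation gives it: $Bu'=\tfrac12(Au-u'')\in\mathcal{C}(\RR,\bX)$.

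\textbf{How to close the gap.} Take zero data and set $U(t)=\int_0^t u(s)\,\rmd s$. Since $u$ and $Au$ are continuous, $u\in\mathcal{C}(\RR,\dom(A))$, so $U\in\mathcal{C}^1(\RR,\dom(A))$. Integrating the equation once gives $u'(t)+2Bu(t)=AU(t)$. Next, $\int_0^t v(s)\,\rmd s=u(t)+BU(t)$, which lies in $\bY$ by (ABY1)--(ABY2). Hence $V(t):=\int_0^t W(s)\,\rmd s\in\dom(A)\times\bY=\dom(G)$. Using $U(t)\in\dom(A)\subseteq\dom(B^2)$,
\[
GV(t)=\begin{pmatrix}-BU(t)+u(t)+BU(t)\\ (A+B^2)U(t)-Bu(t)-B^2U(t)\end{pmatrix}=\begin{pmatrix}u(t)\\ AU(t)-Bu(t)\end{pmatrix}=\begin{pmatrix}u(t)\\ u'(t)+Bu(t)\end{pmatrix}=W(t).
\]
By Remark~\ref{r3.1}, $W\in\mathcal{C}(\RR,\bY\times\bX)$. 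So $V$ is a classical solution of $V'=GV$, $V(0)=0$. Therefore $V\equiv 0$, hence $W=V'\equiv0$ and $u\equiv0$.

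\textbf{Comparison with the paper.} A single integration suffices. A classical solution for $G$ only needs to lie in $\mathcal{C}^1(\RR,\bY\times\bX)\cap\mathcal{C}(\RR,\dom(G))$. The paper's second integration only buys the unneeded regularity $\mathcal{C}^1(\RR,\dom(A)\times\bY)$.
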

\begin{proof}
We note that $x \in \dom(A)$ and $y\in\bY$ imply  $y+Bx\in\bY$. By \eqref{cal-T} $u$ is the upper component of the trajectory $\cT(\cdot)\binom{x}{y}$. We denote by $v$ the lower component of the same trajectory. It follows that $u(0)=x$ and $v(0)=y+Bx$, which implies $ u'(0) = -Bu(0) + v(0) =y$. Applying Lemma~\ref{l3.4} we conclude that $u$ is a classical solution of \eqref{AB}. 
	
Assume $u_1, u_2 \in \mathcal{C}^2(\R, \bX)$ are classical solutions of \eqref{AB} and let $\tu=u_1-u_2$. From Definition~\ref{d3.1} we obtain  $\tu\in\mathcal{C}^2(\RR,\bX)$, $A\tu(\cdot)\in\mathcal{C}(\RR,\bX)$ and $\tu(0)=\tu'(0)=0$. It follows that $B\tu'(\cdot) = \tfrac{1}{2}\bigl(A\tu(\cdot) - \tu''(\cdot)\bigr) \in \mathcal{C}(\RR, \bX)$. Let $w : \R \to \dom(A)$ be the function defined by
	\begin{equation}\label{t3.1.1}
		\quad w(t) = \int_0^t\!\!\left(\int_0^s \tu(\tau)\,\rmd\tau\right)\rmd s
		= \int_0^t (t-s)\,\tu(s)\,\rmd s.
	\end{equation}
	Since $u \in \mathcal{C}^2(\RR, \bX)$ and $Au(\cdot) \in \mathcal{C}(\RR, \bX)$, we have
	$u \in \mathcal{C}(\RR, \dom(A))$, thus
	\begin{equation}\label{t3.1.2}
		w \in \mathcal{C}^2(\RR, \dom(A)),\;\;
		w'(t) = \int_0^t \tu(s)\,\rmd s,\quad w''(t) = \tu(t)\;\mbox{for any}\; t\in\RR.
	\end{equation}
	Since $B$ is a closed linear operator and $u'(t) \in \dom(B)$ for any
	$t \in \RR$, $B\tu'(\cdot) \in \mathcal{C}(\R, \bX)$,  and so from \cite[Proposition~1.1.7]{ABHN},  we obtain 
	\begin{equation}\label{t3.1.3}
		\int_0^t B\tu'(s)\,\rmd s = B\int_0^t \tu'(s)\,\rmd s = B\tu(t)\;\text{for any}\; t \in \R.
	\end{equation}
	Since $A \in \mathcal{B}(\dom(A), \bX)$ we have
	\begin{equation}\label{t3.1.4}
		\int_0^t A\tu(s)\,\rmd s
		= \int_0^t A\big|_{\dom(A)}\, \tu(s)\,\rmd s
		= A\int_0^t \tu(s)\,\rmd s
		= Aw'(t)\quad\text{for any }t \in \R.
	\end{equation}
It follows from \eqref{t3.1.3} and \eqref{t3.1.4} that
	\begin{equation}\label{t3.1.5}
		\tu'(t) + B\big|_{\dom(A)}\,\tu(t)= \int_0^t \bigl(\tu''(s) + B\tu'(s)\bigr)\,\rmd s
		= \int_0^t A\tu(s)\,\rmd s
		= Aw'(t)\;\text{for any}\; t \in \R.
	\end{equation}
	Since $B\big|_{\dom(A)} \in \mathcal{B}(\dom(A), \bY)$ and $A \in \mathcal{B}(\dom(A), \bX)$,
	integrating again in \eqref{t3.1.5} we infer
	\begin{align}\label{t3.1.6}
		w''(t) + 2B\,w'(t)
		&= \tu(t) + B\big|_{\dom(A)}\int_0^t \tu(s)\,\rmd s= \int_0^t\!\bigl(\tu'(s) + B\big|_{\dom(A)}\,\tu(s)\bigr)\,\rmd s
		= \int_0^t A\,w'(s)\,\rmd s\nonumber\\
		&= A\int_0^t w'(s)\,\rmd s= Aw(t)\;\text{for any}\;t \in \R.
	\end{align}
	Let $z : \R \to \bY$ be the function defined by 
	$z(t)=w'(t)+Bw(t)$.
	Since $w \in \mathcal{C}^2(\R, \dom(A))$ and $B\big|_{\dom(A)} \in \mathcal{B}(\dom(A), \bY)$ we obtain 
	\begin{equation}\label{t3.1.7}
		z \in \mathcal{C}^1(\R,\bY),\;\mbox{and}\;
		z'(t) = w''(t) + B\big|_{\dom(A)}\,w'(t),\; t \in \RR.
	\end{equation}
	Using \eqref{t3.1.6} and \eqref{t3.1.7} we evaluate
	\begin{align}\label{t3.1.8}
		z'(t)&= w''(t) + B\big|_{\dom(A)}\,w'(t)= w''(t) + Bw'(t)= Aw(t) - Bw'(t)
		\nonumber\\
		&= Aw(t) - B\bigl(z(t) - Bw(t)\bigr)= (A + B^2)\,w(t) - B\,z(t)\;\text{for any}\;t \in \R.
	\end{align}
	From \eqref{t3.1.2} and \eqref{t3.1.7} we conclude that $\begin{pmatrix}w\\z\end{pmatrix} \in \mathcal{C}^1(\R,\dom(A)\times \bY)
	= \mathcal{C}^1(\R,\dom(G))$ and 
	\begin{equation}\label{t3.1.9}
		\begin{pmatrix}w\\z\end{pmatrix}' = G\begin{pmatrix}w\\z\end{pmatrix},\quad
		\begin{pmatrix}w\\z\end{pmatrix} (0)= \begin{pmatrix}0\\0\end{pmatrix}.
	\end{equation}
	Since $G$ generates a $C_0$-group we infer that
	$\begin{pmatrix}w(t)\\z(t)\end{pmatrix} = \begin{pmatrix}0\\0\end{pmatrix}$
	for any $t \in \R$. Hence,
	\begin{equation}\label{t3.1.10}
		\tu(t) = w''(t) = 0\;\text{for any}\; t \in \R,
	\end{equation}
	proving that $u_1 = u_2$.
\end{proof}	
\noindent\textbf{Mild solutions. Generalized Cosine and Sine Families.}
In this subsection we prove the existence and uniqueness of Laplace Transform mild solutions of \eqref{AB}. First, we recall several results concerning the Laplace Transform of $C_0$-groups. 
\begin{remark}\label{r3.8}
Since $\{\cT(t)\}_{t \in \mathbb{R}}$ is a $C_0$-group with the generator $G$, from  \eqref{growth-cal-T} we obtain
\begin{equation}\label{r3.8.1}
		\bigl(\mathcal{L}\,\cT(\cdot)\tbinom{y}{x}\bigr)(\lambda)
		= R(\lambda, G)\tbinom{y}{x}, 
		\bigl(\mathcal{L}\,\cT(-\cdot)\tbinom{y}{x}\bigr)(\lambda)
		= R(\lambda, -G)\tbinom{y}{x} = -R(-\lambda, G)\tbinom{y}{x}, 
\end{equation}
for any $\lambda \in \mathbb{C}_\omega^+$, $\tbinom{y}{x} \in \bY \times \bX$. From \eqref{3.2.5}, \eqref{cal-T} and \eqref{r3.8.1} we infer that
\begin{align}\label{r3.8.2}
		\bigl(\mathcal{L}\,T_{11}(\cdot)y\bigr)(\lambda) &= Q^{-1}(\lambda)(\lambda I+B)y, 
		\bigl(\mathcal{L}\,T_{12}(\cdot)x\bigr)(\lambda) = Q^{-1}(\lambda)x,\\
		\bigl(\mathcal{L}\,T_{21}(\cdot)y\bigr)(\lambda) &= (\lambda I+B)Q^{-1}(\lambda)(\lambda I+B)y - y, 
		\bigl(\mathcal{L}\,T_{22}(\cdot)x\bigr)(\lambda)= (\lambda I+B)Q^{-1}(\lambda)x, \nonumber \\
		\bigl(\mathcal{L}\,T_{11}(-\cdot)y\bigr)(\lambda) &= Q^{-1}(-\lambda)(\lambda I-B)y, 
		\bigl(\mathcal{L}\,T_{12}(-\cdot)x\bigr)(\lambda) = -Q^{-1}(-\lambda)x, \nonumber \\
		\bigl(\mathcal{L}\,T_{21}(-\cdot)y\bigr)(\lambda) &= -(\lambda I-B)Q^{-1}(-\lambda)(\lambda I-B)y + y, 
		\bigl(\mathcal{L}\,T_{22}(-\cdot)x\bigr)(\lambda)= (\lambda I-B)Q^{-1}(-\lambda)x\nonumber 
	\end{align}
	for any $\lambda \in \mathbb{C}_\omega^+$, $x \in \bX$, $y \in \bY$.
\end{remark}
\begin{remark}\label{r3.9}
Taking the Laplace Transform and using \eqref{r3.8.2} one can readily check that
\begin{equation}\label{r3.9.1}
T_{11}(t)y = T_{12}(t)By + T_{22}(t)y - BT_{12}(t)y\;\mbox{for any}\;t\in\RR, y\in\bY. 
\end{equation}
From \eqref{r3.1.1}, \eqref{cal-T-boundedness} and \eqref{cal-T-continuity}
we conclude that the family of strongly continuous operators $\{T_{11}(t)\}_{t\in\mathbb{R}}$ on $\mathcal{B}(\bY)$ can be extended to a strongly continuous family of operators from $\mathcal{B}(\dom(B),\bX)$.
\end{remark}
We are now ready to prove the existence and uniqueness of Laplace Transform mild solutions of \eqref{AB}, showing in the process that each classical solution is a Laplace Transform mild solution.
\begin{theorem}\label{t3.10}
	Assume \textnormal{(ABY1)--(ABY3)}. Then, for each $x \in \dom(B)$ and $y \in \bX$ \eqref{AB} has a unique mild solution given by
	\begin{equation}\label{Laplace-Mild-sol}
		u(t) = T_{22}(t)x - BT_{12}(t)x + T_{12}(t)(2Bx+y). 
	\end{equation}
\end{theorem}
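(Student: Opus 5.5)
We verify directly that the function $u$ in \eqref{Laplace-Mild-sol} satisfies Definition~\ref{Def-mild}, using the Laplace transform formulas \eqref{r3.8.2}. Uniqueness then follows from the injectivity of the Laplace transform combined with the invertibility of $Q(\pm\lambda)$ from Lemma~\ref{l3.3}.

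\emph{Existence.} Fix $x\in\dom(B)$ and $y\in\bX$. By \eqref{cal-T-continuity}, $T_{12}(\cdot)x\in\mathcal{C}(\RR,\bY)$, and by \eqref{r3.1.1}, $B|_\bY\in\mathcal{B}(\bY,\bX)$. Hence $BT_{12}(\cdot)x=B|_\bY T_{12}(\cdot)x\in\mathcal{C}(\RR,\bX)$. Combining this with \eqref{growth-Tij}, each term of $u$ is continuous and bounded by $\mathrm{const}\cdot e^{\omega|t|}$. Therefore $u\in\mathcal{C}_{-\nu}(\RR,\bX)$ with $\nu=\omega$, and we take $\tnu=\omega+1$.

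Next we compute $\mathcal{L}u$ for $\mathrm{Re}\lambda>\omega$. The Laplace integral of $T_{12}(\cdot)x$ converges as a Bochner integral in $\bY$, and $B|_\bY$ is bounded from $\bY$ to $\bX$, so $B$ may be moved outside the integral. Together with \eqref{r3.8.2} this gives
\[
(\mathcal{L}u)(\lambda)=(\lambda I+B)Q^{-1}(\lambda)x-BQ^{-1}(\lambda)x+Q^{-1}(\lambda)(2Bx+y)=\lambda Q^{-1}(\lambda)x+Q^{-1}(\lambda)(2Bx+y).
\]
This vector lies in $\dom(A)=\dom(Q(\lambda))$, and applying $Q(\lambda)$ yields $\lambda x+2Bx+y$.

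The same argument with the backward formulas in \eqref{r3.8.2} gives
\[
(\mathcal{L}u(-\cdot))(\lambda)=(\lambda I-B)Q^{-1}(-\lambda)x+BQ^{-1}(-\lambda)x-Q^{-1}(-\lambda)(2Bx+y)=\lambda Q^{-1}(-\lambda)x-Q^{-1}(-\lambda)(2Bx+y).
\]
Applying $Q(-\lambda)$ yields $\lambda x-2Bx-y$. Hence \eqref{Def-Lap-Mild} holds for all $\lambda\in\CC^+_{\tnu}$.

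\emph{Uniqueness.} Let $u_1,u_2$ be two mild solutions, with growth constants $\nu_1,\nu_2$ and half-planes $\tnu_1,\tnu_2$, and set $w=u_1-u_2$. Then $w\in\mathcal{C}_{-\nu}(\RR,\bX)$ with $\nu=\max\{\nu_1,\nu_2\}$. Subtracting the identities \eqref{Def-Lap-Mild} gives $Q(\lambda)(\mathcal{L}w)(\lambda)=0$ and $Q(-\lambda)(\mathcal{L}w(-\cdot))(\lambda)=0$ for $\mathrm{Re}\lambda>\max\{\tnu_1,\tnu_2\}$. For $\mathrm{Re}\lambda>\max\{\tnu_1,\tnu_2,\omega\}$, both $Q(\lambda)$ and $Q(-\lambda)$ are injective by Lemma~\ref{l3.3}, since $|\mathrm{Re}(\pm\lambda)|>\omega$. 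Thus both Laplace transforms vanish on a right half-plane. By the uniqueness theorem for the vector-valued Laplace transform (e.g.\ \cite{ABHN}), $w=0$ on $[0,\infty)$ and $w(-\cdot)=0$ on $[0,\infty)$, so $w\equiv0$.

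The main technical point is interchanging $B$ with the Laplace integral in the existence step. This is handled by working with Bochner integrals in $\bY$ and using the boundedness of $B|_\bY$ from \eqref{r3.1.1}. The remaining steps are algebraic manipulations of \eqref{r3.8.2}.
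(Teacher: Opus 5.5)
Your proof is correct and follows essentially the same route as the paper: you compute $\mathcal{L}u$ and $\mathcal{L}u(-\cdot)$ from \eqref{r3.8.2} to verify \eqref{Def-Lap-Mild}, then get uniqueness from the invertibility of $Q(\pm\lambda)$ (Lemma~\ref{l3.3}) together with injectivity of the Laplace transform. Two points the paper leaves implicit are handled more carefully in your version: you justify pulling $B$ out of the Laplace integral via $B|_\bY\in\mathcal{B}(\bY,\bX)$, and you restrict the uniqueness step to the half-planes $\mathbb{C}^+_{\tnu_i}$ on which the defining identities are actually assumed to hold.
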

\begin{proof}
First, we show that the function defined by \eqref{Laplace-Mild-sol} is a Laplace Transform mild solution. From \eqref{r3.1.1}, \eqref{cal-T-boundedness} and \eqref{cal-T-continuity} we obtain $u \in \mathcal{C}(\mathbb{R}, \bX)$. Moreover, it follows from \eqref{cal-T-continuity} and \eqref{growth-Tij} that $u \in \mathcal{C}_{0,-\omega}(\mathbb{R},\bX)$. Using \eqref{r3.8.2} we evaluate
\begin{align}\label{t3.10.1}
(\mathcal{L}u)(\lambda)
&= (\lambda I+B)Q^{-1}(\lambda)x - BQ^{-1}(\lambda)x + Q^{-1}(\lambda)(2Bx+y) = Q^{-1}(\lambda)(\lambda x + 2Bx + y);\nonumber\\ 
(\mathcal{L}u(-\cdot))(\lambda)
&= (\lambda I-B)Q^{-1}(-\lambda)x + BQ^{-1}(-\lambda)x - Q^{-1}(-\lambda)(2Bx+y)\nonumber\\&= Q^{-1}(-\lambda)(\lambda x - 2Bx - y)\;\mbox{for any}\;\lambda \in \mathbb{C}_\omega^+.
\end{align}
Since $Q(\pm\lambda)$ is invertible with bounded inverse for any $\lambda \in \mathbb{C}_\omega^+$, from \eqref{t3.10.1} we see that equation \eqref{Def-Lap-Mild} holds true, thus the function defined in \eqref{Laplace-Mild-sol} is a Laplace Transform mild solution of \eqref{AB}.
	
Assume $u \in \mathcal{C}_{0,-\nu_1}(\mathbb{R},X)$ and $v \in \mathcal{C}_{0,-\nu_2}(\mathbb{R},X)$ are Laplace Transform mild solutions of \eqref{AB}. Let $\nu_0 = \max\{\omega, \nu_1, \nu_2\}$. Then $u - v \in \mathcal{C}_{0,-\nu_0}(\mathbb{R},X)$ and
	\begin{equation}\label{t3.10.2}
		Q(\lambda)(\mathcal{L}(u-v))(\lambda) = 0,\; Q(-\lambda)(\mathcal{L}(u(-\cdot)-v(-\cdot)))(\lambda) = 0\;\mbox{for any}\;\lambda \in \mathbb{C}_{\nu_0}^+
	\end{equation}
	Using again Lemma~\ref{l3.3}, in particular the fact that $Q(\pm\lambda)$ is invertible with bounded inverse for any $\lambda \in \mathbb{C}_{\nu_0}^+$, from \eqref{t3.10.2} we infer that $u|_{\mathbb{R}_+} = v|_{\mathbb{R}_+}$ and $u(-\cdot)|_{\mathbb{R}_+} = v(-\cdot)|_{\mathbb{R}_+}$. Hence $u = v$.
\end{proof}
To conclude this section we define the generalized cosine and sine families associated to \eqref{AB}.
\begin{definition}\label{d3.11}
	We define $C_{A,B} : \mathbb{R} \to \mathcal{B}(\dom(B),\bX)$ and $S_{A,B} : \mathbb{R} \to \mathcal{B}(\bX)$ as follows: $\CAB(\cdot)x$ is the unique Laplace Transform mild solution of 
	\begin{equation}\label{IVP-x0} 
		\begin{cases} u'' + 2Bu' = Au, \\ u(0) = x\in\dom(B), u'(0) = 0. \end{cases}
	\end{equation}
	Similarly, $\SAB(\cdot)x$ is the unique Laplace Transform mild solution of 
	\begin{equation}\label{IVP-0y} 
		\begin{cases} u'' + 2Bu' = Au, \\ u(0) =0, u'(0) =x\in\bX. \end{cases}
	\end{equation}
\end{definition}
\begin{remark}\label{r3.12} From Theorem~\ref{t3.10} we have
	\begin{equation}\label{def-CAB}
		\CAB(t)x = T_{22}(t)x - BT_{12}(t)x + 2T_{12}(t)Bx,\;\mbox{for any}\; x \in \dom(B);
	\end{equation}
	\begin{equation}\label{def-SAB}
		\SAB(t)x = T_{12}(t)x,\;\mbox{for any}\;x\in\bX. 
	\end{equation}
	Reformulating Theorem~\ref{t3.10}, \eqref{def-CAB} and \eqref{def-SAB}, we infer that for any $x\in\dom(B)$, $y\in\bX$ \eqref{AB} has a unique Laplace Transform mild solution given by $\CAB(\cdot)x+\SAB(\cdot)y$.
\end{remark}

\section{Properties of the generalized cosine/sine functions.}\label{s4}
In this section we discuss the properties of the generalized cosine and sine families. One of our goals is to prove that many well-known properties of a cosine family and its associated sine family known in the undamped case can be recovered in the case of 
$\{\CAB(t)\}_{t\in\RR}$ and $\{\SAB(t)\}_{t\in\RR}$, defined in \eqref{def-CAB} and \eqref{def-SAB}. Throughout this section we assume Hypothesis (ABY). 
\begin{lemma}\label{l4.1}
	Assume \textnormal{(ABY1)--(ABY3)}. Then, 
	\begin{itemize}
		\item[(i)] $\CAB(t)\bY \subseteq \bY$ for any $t \in \mathbb{R}$;
		\item[(ii)] $\CAB(t)\dom(A) \subseteq \dom(A)$ for any $t \in \mathbb{R}$;
		\item[(iii)] $\SAB(t)\bX \subseteq \bY \subseteq \dom(B)$ for any $t \in \mathbb{R}$;
		\item[(iv)] $\SAB(t)\bY\subseteq\dom(A)$ for any $t \in \mathbb{R}$;
		\item[(v)] $A\SAB(t)\dom(A) \subseteq\bY \subseteq \dom(B)$ for any $t \in \mathbb{R}$.
	\end{itemize}
\end{lemma}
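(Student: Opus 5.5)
The plan is to read everything off the block structure \eqref{cal-T} of $\{\cT(t)\}_{t\in\RR}$. I will use \eqref{def-CAB}, \eqref{def-SAB}, the mapping properties \eqref{cal-T-boundedness}, and the standard fact that a $C_0$-group leaves $\dom(G)=\dom(A)\times\bY$ invariant and commutes with $G$ there. The first step is to rewrite the cosine family on $\bY$. For $y\in\bY$, Remark~\ref{r3.9}, i.e.\ \eqref{r3.9.1}, gives $T_{22}(t)y-BT_{12}(t)y=T_{11}(t)y-T_{12}(t)By$. Substituting this into \eqref{def-CAB} yields
\begin{equation*}
\CAB(t)y=T_{11}(t)y+T_{12}(t)By,\quad y\in\bY,
\end{equation*}
which is the first component of $\cT(t)\binom{y}{By}$.

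Assertion (i) then follows because $T_{11}(t)\in\mathcal{B}(\bY)$, $By\in\bX$ and $T_{12}(t)\in\mathcal{B}(\bX,\bY)$. Assertion (iii) is immediate from $\SAB(t)=T_{12}(t)\in\mathcal{B}(\bX,\bY)$ together with $\bY\subseteq\dom(B)$ from (ABY2).

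For (ii), take $x\in\dom(A)$. By (ABY2) we have $Bx\in\bY$, so $\binom{x}{Bx}\in\dom(G)$. Invariance of $\dom(G)$ under $\cT(t)$ shows that the first component $\CAB(t)x$ of $\cT(t)\binom{x}{Bx}$ lies in $\dom(A)$; this is consistent with Theorem~\ref{t3.1} with $y=0$. For (iv), take $y\in\bY$. Then $\binom{0}{y}\in\dom(G)$, so $\SAB(t)y=T_{12}(t)y$, the first component of $\cT(t)\binom{0}{y}$, lies in $\dom(A)$.

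Assertion (v) is the only step needing an extra idea, and I expect it to be the main obstacle, since it requires second-order regularity. I will use $G\cT(t)=\cT(t)G$ on $\dom(G)$, which means $\cT(t)$ maps $\dom(G^2)$ into itself.

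Fix $x\in\dom(A)$. Then $\binom{0}{x}\in\dom(G)$ and $G\binom{0}{x}=\binom{x}{-Bx}$. By (ABY2), $-Bx\in\bY$, so $G\binom{0}{x}\in\dom(G)$, i.e.\ $\binom{0}{x}\in\dom(G^2)$. Set $u=T_{12}(t)x=\SAB(t)x$ and $v=T_{22}(t)x$. We know $\binom{u}{v}=\cT(t)\binom{0}{x}\in\dom(G^2)$, so
\begin{equation*}
G\binom{u}{v}=\binom{-Bu+v}{Lu-Bv}\in\dom(A)\times\bY .
\end{equation*}
Put $w:=v-Bu\in\dom(A)$, so that $Bw\in\bY$ by (ABY2). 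Using $L=A+B^2$, and noting that $u\in\dom(A)\subseteq\dom(B^2)$ and $v\in\bY\subseteq\dom(B)$, I write
\begin{equation*}
Au=(Lu-Bv)+Bv-B^2u=(Lu-Bv)+B(v-Bu)=(Lu-Bv)+Bw .
\end{equation*}
Both terms on the right lie in $\bY$. Hence $A\SAB(t)x\in\bY\subseteq\dom(B)$, which proves (v).
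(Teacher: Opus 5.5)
Your proof is correct and is essentially the paper's argument: \eqref{r3.9.1} gives $\CAB(t)y=T_{11}(t)y+T_{12}(t)By$ on $\bY$, invariance of $\dom(G)$ under $\cT(t)$ yields (i)--(iv), and invariance of $\dom(G^2)$ applied to $\tbinom{0}{x}$ yields (v). The only difference is the last step of (v): you read off $T_{22}(t)x-B\SAB(t)x\in\dom(A)$ directly from the first component of $G\cT(t)\tbinom{0}{x}$, whereas the paper rewrites this quantity as $\CAB(t)x-2\SAB(t)Bx$ and invokes (ii) and (iv); your version is slightly more self-contained and also avoids the harmless sign slip in \eqref{4.1.6}, where the last term should be $+2B\SAB(t)Bx$.
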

\begin{proof} From \eqref{r3.9.1} and \eqref{def-CAB} we have:
	\begin{equation}\label{4.1.1}
		\CAB(t)y = T_{11}(t)y + T_{12}(t)By\;\text{for any}\; t \in \mathbb{R},\, y \in\bY. 
	\end{equation}
	\noindent Assertion (i) follows from \eqref{cal-T-boundedness} and \eqref{4.1.1}.
	
	\noindent(ii) Fix $x \in \dom(A)$. From (AB2) we have $Bx \in \bY$, hence $\tbinom{x}{Bx} \in \dom(A) \times \bY = \dom(G)$, which implies that $\cT(t)\tbinom{x}{Bx} \in \dom(G) = \dom(A) \times \bY$ for any $t \in \mathbb{R}$. From \eqref{cal-T} and \eqref{4.1.1} we conclude that $\CAB(t)x = T_{11}(t)x + T_{12}(t)Bx \in \dom(A)$ for any $t \in \mathbb{R}$, proving (ii).
	Assertion (iii) follows from \eqref{cal-T-boundedness}, \eqref{def-SAB} and (ABY2).
	
	(iv) Fix $y \in \bY$. Then $\tbinom{0}{y} \in \dom(A) \times \bY = \dom(G)$ and thus
	\begin{equation}\label{4.1.2}
		\tbinom{T_{12}(t)y}{T_{22}(t)y} = \cT(t)\tbinom{0}{y} \in \dom(G) = \dom(A) \times \bY \;\text{for any}\; t \in \mathbb{R}.
	\end{equation}
	Hence, $\SAB(t)y = T_{12}(t)y \in \dom(A)$, proving (iv). 
	
	(v) Again we recall the notation $L = A + B^2$ and fix $x \in \dom(A)$. Then, $\tbinom{0}{x} \in \dom(A) \times \bY = \dom(G)$ and
	\begin{equation}\label{4.1.3}
		G\binom{0}{x}
		= \begin{bmatrix} -B & I \\ L & -B \end{bmatrix}\binom{0}{x}
		= \binom{x}{-Bx} \in \dom(G). 
	\end{equation}
	Hence $\binom{0}{x} \in \dom(G^2)$, which implies 
	\begin{equation}\label{4.1.4}
		\binom{T_{12}(t)x}{T_{22}(t)x} = \cT(t)\binom{0}{x} \in \dom(G^2),\;\mbox{thus}\;
		G\binom{T_{12}(t)x}{T_{22}(t)x} \in \dom(A) \times \bY	\; \text{for any}\; t \in \mathbb{R}. 
	\end{equation}
	It follows that
	\begin{equation}\label{4.1.5}
		LT_{12}(t)x - BT_{22}(t)x \in \bY \;\text{for any}\; t \in \mathbb{R}. 
	\end{equation}
	Plugging \eqref{def-CAB} and \eqref{def-SAB}  in \eqref{4.1.5} we obtain
	\begin{equation}\label{4.1.6}
		A\SAB(t)x - BC_{A,B}(t)x - 2BS_{A,B}(t)Bx \in \bY \;\text{for any}\; t \in \mathbb{R}. 
	\end{equation}
	From (ii), (iv) and hypothesis (ABY1)-(ABY2) we have $Bx \in \bY$, hence
	\begin{equation}\label{4.1.7}
		\CAB(t)x,\; \SAB(t)Bx \in \dom(A), \; \text{thus}\; BC_{A,B}(t)x,\; BS_{A,B}(t)Bx \in \bY\;\text{for any}\; t \in \mathbb{R}. 
	\end{equation}
	assertion (v) follows shortly from \eqref{4.1.6} and \eqref{4.1.7}.
\end{proof}
In the next two lemmas we study the differentiability properties of trajectories of the generalized cosine and sine families. To prove these results we recall
that if $g \in \mathcal{C}_{0,-\omega}(\mathbb{R},\bX)$ and $f : \mathbb{R} \to \bX$ is defined by $f(t) = \int_0^t g(s)\,\rmd s$, then
\begin{equation}\label{Laplace-pm}
	(\mathcal{L}f(-\cdot))(\lambda) = -\lambda^{-1}\mathcal{L}\{g(-\cdot)\}\;\mbox{for any}\;\lambda\in\CC_\omega^+.
\end{equation}
\begin{lemma}\label{l4.2}
	Assume hypotheses \textnormal{(ABY1)--(ABY3)}. Then,
	\begin{itemize}
		\item[(i)] $\SAB(\cdot)x \in \mathcal{C}^1(\mathbb{R},\bX) \cap \mathcal{C}(\mathbb{R},\bY)$ for any $x \in \bX$, and
		\begin{equation}\label{diff-SAB-1}
			\bigl(\SAB(t)x\bigr)' = T_{22}(t)x - B\SAB(t)x\;\mbox{for any}\; t\in\RR, x\in\bX;
		\end{equation}
		\item[(ii)] Formula \eqref{diff-SAB-1} can be also written as 
		\begin{equation}\label{diff-SAB-2}
			\bigl(\SAB(t)x\bigr)' = \CAB(t)x - 2\SAB(t)Bx \;\mbox{for any}\; t\in\RR, x\in\dom(B);
		\end{equation}
		\item[(iii)] $\SAB(\cdot)y \in \mathcal{C}^2(\mathbb{R},\bX) \cap \mathcal{C}^1(\mathbb{R},\bY) \cap \mathcal{C}(\mathbb{R},\dom(A))$ for any $y \in \bY$, and
		\begin{equation}\label{diff-SAB-3}
			\bigl(\SAB(t)y\bigr)' = \CAB(t)y - 2\SAB(t)By, 
			\bigl(\SAB(t)y\bigr)''= A\SAB(t)y - 2BC_{A,B}(t)y + 4B\SAB(t)By 
		\end{equation}
		for any $t \in \mathbb{R}$, $y \in \bY$.
		\item[(iv)] The generalized sine also satisfies the identity
		\begin{equation}\label{diff-SAB-4}
			\bigl(\SAB(t)x\bigr)'' + 2\bigl(\SAB(t)Bx\bigr)' = \SAB(t)Ax \;\mbox{for any}\; t\in\RR, x\in\dom(A).
		\end{equation}
	\end{itemize}
\end{lemma}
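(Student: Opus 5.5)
The whole lemma will be read off from the group $\{\cT(t)\}_{t\in\RR}$. The key identity is $\SAB(t)=T_{12}(t)$ from \eqref{def-SAB}, together with the differential equations satisfied by the trajectories $\cT(\cdot)\binom{0}{x}$.

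\emph{Step (i).} For $x\in\bX$, the continuity $\SAB(\cdot)x=T_{12}(\cdot)x\in\mathcal{C}(\RR,\bY)$ is \eqref{cal-T-continuity}. For differentiability in $\bX$, first take $x\in\bY$. Then $\binom{0}{x}\in\dom(G)$, so $\cT(\cdot)\binom{0}{x}$ is $\mathcal{C}^1$ in $\bY\times\bX$, and its first component satisfies
\[
(T_{12}(t)x)'=-BT_{12}(t)x+T_{22}(t)x .
\]
Integrating gives
\[
T_{12}(t)x=\int_0^t\bigl(T_{22}(s)x-BT_{12}(s)x\bigr)\,\rmd s .
\]
Both sides are continuous in $x\in\bX$ uniformly on compacts: $T_{12}(s)\in\mathcal{B}(\bX,\bY)$, $B|_\bY\in\mathcal{B}(\bY,\bX)$ by \eqref{r3.1.1}, and \eqref{growth-Tij} applies. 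Since $\bY$ is dense in $\bX$ (it contains $\dom(A)$, and in the application it is a domain), the identity extends to all $x\in\bX$. The integrand $T_{22}(\cdot)x-B|_\bY T_{12}(\cdot)x$ is continuous into $\bX$, which gives $\mathcal{C}^1$ and \eqref{diff-SAB-1}.

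If density of $\bY$ is not available, I will argue through the Laplace transform instead. By \eqref{r3.8.2}, $\mathcal{L}(T_{22}x-BT_{12}x)(\lambda)=\lambda Q^{-1}(\lambda)x$, and similarly on the negative half-line. Then $\lambda$ times the transform of $T_{12}x$ equals this, so by uniqueness of the Laplace transform (using \eqref{Laplace-pm} for $t<0$) the integral identity holds without any density argument. I will take this route as the primary one.

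\emph{Step (ii).} This is immediate from \eqref{def-CAB}: $T_{22}x-BT_{12}x=\CAB(t)x-2T_{12}(t)Bx$.

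\emph{Step (iii).} Let $y\in\bY$. From $\binom{0}{y}\in\dom(G)$ we get $T_{12}(\cdot)y\in\mathcal{C}^1(\RR,\bY)\cap\mathcal{C}(\RR,\dom(A))$, which is the argument of Lemma~\ref{l3.4} applied to the trajectory $\cT(\cdot)\binom{0}{y}$. Indeed, $\SAB(\cdot)y$ is exactly the upper component $u$ in Lemma~\ref{l3.4} with $x=0$. So Lemma~\ref{l3.4} yields $\mathcal{C}^2(\RR,\bX)$ and $u''=Au-2Bu'$. Substituting $u'=\CAB y-2\SAB By$ from (ii) gives \eqref{diff-SAB-3}; this uses $\CAB(t)y$, $\SAB(t)By\in\bY\subseteq\dom(B)$ from Lemma~\ref{l4.1}(i),(iii).

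\emph{Step (iv).} For $x\in\dom(A)$ we have $Bx\in\bY$, so by (iii) the function $\SAB(\cdot)Bx$ is $\mathcal{C}^1$ in $\bX$ and both sides of \eqref{diff-SAB-4} make sense. I will verify \eqref{diff-SAB-4} by integrating twice and comparing Laplace transforms, on $\RR_+$ and on $\RR_-$ separately. On the right, $\mathcal{L}(\SAB Ax)=Q^{-1}(\lambda)Ax$. On the left, the transform is $\lambda^2Q^{-1}x+2\lambda Q^{-1}Bx$, up to boundary terms: $u(0)=0$, $u'(0)=x$, and $\SAB(0)Bx=0$, so the boundary contribution is $-x$. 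It then suffices to check
\[
Q^{-1}(\lambda)(\lambda^2+2\lambda B)x-x=Q^{-1}(\lambda)Ax ,
\]
that is, $Q^{-1}(\lambda)Q(\lambda)x=x$, valid since $x\in\dom(A)$.

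The main obstacle is this last computation. I need to justify that $Q^{-1}(\lambda)$ applied to $(\lambda^2+2\lambda B-A)x$ can be split termwise, and that differentiation commutes with the transforms. This requires knowing that $\SAB(\cdot)x$ is $\mathcal{C}^2$ for $x\in\dom(A)\subseteq\bY$, which (iii) provides.
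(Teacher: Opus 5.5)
Your proposal is correct and follows the paper's proof essentially step by step. For (i) you identify $\int_0^t\bigl(T_{22}(s)x-B\SAB(s)x\bigr)\,\rmd s$ with $T_{12}(t)x$ via Laplace transforms on both half-lines, (ii) comes directly from \eqref{def-CAB}, (iii) is Lemma~\ref{l3.4} (Theorem~\ref{t3.1}) applied to $\cT(\cdot)\binom{0}{y}$, and (iv) compares Laplace transforms using $Q^{-1}(\lambda)Q(\lambda)x=x$. Incidentally, your fallback density route for (i) is also valid, but the right reason $\bY$ is dense in $\bX$ is that the generator domain $\dom(G)=\dom(A)\times\bY$ is dense in $\bY\times\bX$; containing $\dom(A)$ does not help, since the paper only proves density of $\dom(A)$ later, in Lemma~\ref{l4.6}, using this very lemma.
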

\begin{proof}
	(i) Fix $x \in \bX$. From \eqref{cal-T-continuity} and \eqref{def-SAB} we infer that $\SAB(\cdot)x\in\mathcal{C}(\mathbb{R},\bY)$. Let $f_x : \mathbb{R} \to X$ be defined by
	\begin{equation}\label{4.2.1}
		f_x(t) = \int_0^t \bigl(T_{22}(s)x - B\SAB(s)x\bigr)\,\rmd s, \quad t \in \mathbb{R}. 
	\end{equation}
	From \eqref{r3.1.1}, \eqref{cal-T-continuity} and \eqref{def-SAB} we infer $T_{22}(\cdot)x,\, B\SAB(\cdot)x \in \mathcal{C}(\mathbb{R},\bX)$, which implies 
	\begin{equation}\label{4.2.2}
		f_x \in \mathcal{C}^1(\mathbb{R},\bX) \; \text{and} \; f_x'(t) = T_{22}(t)x - B\SAB(t)x\;\text{for any}\; t\in\RR. 
	\end{equation}
	Using \eqref{r3.8.2} and \eqref{def-SAB} we evaluate
	\begin{equation}\label{4.2.3}
		(\mathcal{L}f_x)(\lambda)
		= \lambda^{-1}\mathcal{L}\bigl(T_{22}(\cdot)x - B\SAB(\cdot)x\bigr)(\lambda)
		=\lambda^{-1}\bigl((\lambda I+B)Q^{-1}(\lambda)x - BQ^{-1}(\lambda)x\bigr)
		= Q^{-1}(\lambda)x
	\end{equation}
	for any $\lambda \in \mathbb{C}_\omega^+$. Using again \eqref{r3.1.1},  \eqref{r3.8.2}, \eqref{def-SAB} and \eqref{Laplace-pm} we obtain 
	\begin{align}\label{4.2.4}
		(\mathcal{L}f_x(-\cdot))(\lambda)
		&=-\lambda^{-1}\mathcal{L}\bigl(T_{22}(-\cdot)x - B\SAB(-\cdot)x\bigr)(\lambda)
		\nonumber\\
		&=-\lambda^{-1}\bigl((\lambda-B)Q^{-1}(-\lambda)x - B(-Q^{-1}(-\lambda)x)\bigr)=-Q^{-1}(-\lambda)x\;\mbox{for any}\;\lambda \in \mathbb{C}_\omega^+.
	\end{align}
	From \eqref{r3.8.2}, \eqref{def-SAB}, \eqref{4.2.3} and \eqref{4.2.4} we conclude that $f_x = \SAB(\cdot)x$. Assertion (i) follows shortly from \eqref{4.2.2}.
	
	(ii) Identity \eqref{diff-SAB-2} follows from (i) and \eqref{def-CAB}.
	
	(iii) Fix $y\in\bY$. By Theorem~\ref{t3.1} $\SAB(\cdot)y = T_{12}(\cdot)y \in \mathcal{C}^2(\mathbb{R},\bX) \cap \mathcal{C}^1(\mathbb{R},\bY) \cap \mathcal{C}(\mathbb{R},\dom(A))$.
	Identity \eqref{diff-SAB-3} follows from \eqref{diff-SAB-2} since $\bY \subseteq \dom(B)$. Moreover, from \eqref{diff-SAB-2} and since $\SAB(\cdot)y$ is a classical solution of \eqref{IVP-0y} we obtain
	\begin{align}\label{4.2.5}
		\bigl(\SAB(t)y\bigr)''
		&= A\SAB(t)y - 2B\bigl(\SAB(t)y\bigr)'= A\SAB(t)y - 2B\bigl(\CAB(t)y - 2\SAB(t)By\bigr) \nonumber\\
		&= A\SAB(t)y - 2BC_{A,B}(t)y + 4B\SAB(t)By \;\;\mbox{for any}\;t\in\RR,
	\end{align}
	proving (iii).
	
	iv) Fix $x \in \dom(A)$. Since $\SAB(0)=0$ and $\SAB'(0)=I$ (in the strong sense), by taking Laplace Transform we have
	\begin{align}\label{4.2.6}
		\mathcal{L}\Bigl((\SAB(\cdot)x)''&+ 2(\SAB(\cdot)Bx)' - \SAB(\cdot)Ax\Bigr)(\lambda)
		= \lambda^2\mathcal{L}(\SAB(\cdot)x)(\lambda) - \lambda\SAB(0)x- \SAB'(0)x \notag\\
		&\qquad\qquad + 2\lambda\mathcal{L}(\SAB(\cdot)Bx)(\lambda)-\SAB(0)Bx - Q^{-1}(\lambda)Ax \notag\\
		&= \lambda^2 Q^{-1}(\lambda)x - x + 2\lambda Q^{-1}(\lambda)Bx - Q^{-1}(\lambda)Ax \notag\\
		&= Q^{-1}(\lambda)(\lambda^2 I+ 2\lambda B - A)x - x = Q^{-1}(\lambda)Q(\lambda)x - x = 0\;\mbox{for any}\;\lambda \in \mathbb{C}_\omega^+. 
	\end{align}
	Similarly from \eqref{r3.8.2} we obtain
	\begin{align}\label{4.2.7}
		\mathcal{L}\Bigl((\SAB(\cdot)x)''(-\cdot)&+ 2(\SAB(\cdot)Bx)'(-\cdot) - \SAB(-\cdot)Ax\Bigr)(\lambda)\notag\\
		&= \mathcal{L}((\SAB(-\cdot)x)'')(\lambda)- 2\mathcal{L}((\SAB(-\cdot)Bx)')(\lambda) + Q^{-1}(-\lambda)Ax\notag\\ 
		&= \lambda^2\mathcal{L}(\SAB(-\cdot)x)(\lambda) - \lambda\SAB(0)x + \SAB'(0)x \notag\\
		&\qquad\qquad - 2\Bigl(\lambda\mathcal{L}(\SAB(-\cdot)Bx)(\lambda) - \SAB(0)Bx\Bigr) + Q^{-1}(-\lambda)Ax \notag\\
		&= -\lambda^2 Q^{-1}(-\lambda)x + x + 2\lambda Q^{-1}(-\lambda)Bx + Q^{-1}(-\lambda)Ax \notag\\
		&= x - Q^{-1}(-\lambda)(\lambda^2 I - 2\lambda B - A)x = x - Q^{-1}(-\lambda)Q(-\lambda)x = 0
	\end{align}
	for any $\lambda \in \mathbb{C}_\omega^+$. The identity \eqref{diff-SAB-4} follows from \eqref{4.2.6} and \eqref{4.2.7}.
\end{proof}
\begin{lemma}\label{l4.3}
	Assume hypotheses \textnormal{(ABY1)--(ABY3)}. Then,
	\begin{itemize}
		\item[(i)] $\CAB(\cdot)y \in \mathcal{C}^1(\mathbb{R},\bX) \cap \mathcal{C}(\mathbb{R},\bY)$ for any $y \in \bY$, and
		\begin{equation}\label{diff-CAB-1}
			\bigl(\CAB(t)y\bigr)' = A\SAB(t)y - 2BC_{A,B}(t)y + 4B\SAB(t)By + 2\bigl(\SAB(t)By\bigr)' 
		\end{equation}
		for any $t \in \mathbb{R}$, $y \in \bY$;
		\item[(ii)] $\CAB(\cdot)x \in \mathcal{C}^2(\mathbb{R},\bX) \cap \mathcal{C}^1(\mathbb{R},\bY) \cap \mathcal{C}(\mathbb{R},\dom(A))$ for any $x \in \dom(A)$, and
		\begin{equation}\label{diff-CAB-2}
			\bigl(\CAB(t)x\bigr)' = A\SAB(t)x - 2BC_{A,B}(t)x + 4BS_{A,B}(t)Bx + 2\CAB(t)Bx - 4\SAB(t)B^2x 
		\end{equation}
		\begin{align}\label{diff-CAB-3}
			\bigl(\CAB(t)x\bigr)'' &= (A+4B^2)\CAB(t)x + 4BC_{A,B}(t)Bx + 8BS_{A,B}(t)B^2x\nonumber\\& - 8B^2\SAB(t)Bx - 2BA\SAB(t)x\;\mbox{for any}\;t\in\RR, x\in\dom(A). 
		\end{align}
	\end{itemize}
\end{lemma}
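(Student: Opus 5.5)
The plan is to reduce everything to Lemma~\ref{l4.2} and to the classical solution theory of Theorem~\ref{t3.1}.

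For (i), fix $y\in\bY$. From \eqref{4.1.1} we have $\CAB(t)y=T_{11}(t)y+T_{12}(t)By$. Since $By\in\bX$, \eqref{cal-T-continuity} gives $\CAB(\cdot)y\in\mathcal{C}(\RR,\bY)$. For the $\mathcal{C}^1$-regularity in $\bX$, I would invert \eqref{diff-SAB-2}, which is legitimate because $\bY\subseteq\dom(B)$:
\[
\CAB(t)y=\bigl(\SAB(t)y\bigr)'+2\SAB(t)By,\qquad t\in\RR .
\]
By Lemma~\ref{l4.2}(iii), $\SAB(\cdot)y\in\mathcal{C}^2(\RR,\bX)$, so its first derivative is $\mathcal{C}^1$. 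By Lemma~\ref{l4.2}(i) applied to $By\in\bX$, $\SAB(\cdot)By\in\mathcal{C}^1(\RR,\bX)$. Hence $\CAB(\cdot)y\in\mathcal{C}^1(\RR,\bX)$, with
\[
\bigl(\CAB(t)y\bigr)'=\bigl(\SAB(t)y\bigr)''+2\bigl(\SAB(t)By\bigr)'.
\]
Substituting the second identity of \eqref{diff-SAB-3} for $(\SAB(t)y)''$ gives exactly \eqref{diff-CAB-1}.

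For (ii), fix $x\in\dom(A)$. By (ABY2), $Bx\in\bY$, so $\tbinom{x}{Bx}\in\dom(G)$. Theorem~\ref{t3.1} with initial data $(x,0)$ shows that the unique classical solution of \eqref{IVP-x0} is $T_{11}(\cdot)x+T_{12}(\cdot)Bx$. By \eqref{4.1.1} this function is $\CAB(\cdot)x$. Lemma~\ref{l3.4}(i) then gives $\CAB(\cdot)x\in\mathcal{C}^2(\RR,\bX)\cap\mathcal{C}^1(\RR,\bY)\cap\mathcal{C}(\RR,\dom(A))$, and Lemma~\ref{l3.4}(ii) gives
\[
(\CAB(t)x)''=A\CAB(t)x-2B(\CAB(t)x)' .
\]
Formula \eqref{diff-CAB-2} follows from \eqref{diff-CAB-1} with $y=x$. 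The only extra input is the first identity in \eqref{diff-SAB-3} applied to $Bx\in\bY$, which reads $(\SAB(t)Bx)'=\CAB(t)Bx-2\SAB(t)B^2x$.

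To obtain \eqref{diff-CAB-3}, I would apply $B$ term by term to the right-hand side of \eqref{diff-CAB-2}. This requires checking that each term lies in $\dom(B)$:
\begin{itemize}
\item $A\SAB(t)x\in\bY$ by Lemma~\ref{l4.1}(v);
\item $\CAB(t)x\in\dom(A)$ by Lemma~\ref{l4.1}(ii), so $B\CAB(t)x\in\bY$;
\item $\SAB(t)Bx\in\dom(A)$ by Lemma~\ref{l4.1}(iv), so $B\SAB(t)Bx\in\bY$;
\item $\CAB(t)Bx\in\bY$ by Lemma~\ref{l4.1}(i);
\item $\SAB(t)B^2x\in\bY$ by Lemma~\ref{l4.1}(iii).
\end{itemize}
Since $\bY\subseteq\dom(B)$, each term lies in $\dom(B)$. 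Distributing $-2B$ then gives
\[
(A+4B^2)\CAB(t)x-2BA\SAB(t)x-8B^2\SAB(t)Bx-4B\CAB(t)Bx+8B\SAB(t)B^2x .
\]

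This domain bookkeeping is the main obstacle. The expansion also produces $-4B\CAB(t)Bx$, whereas the stated \eqref{diff-CAB-3} has $+4B\CAB(t)Bx$. My computation indicates that the sign in \eqref{diff-CAB-3} is a misprint, and I would recheck it carefully against \eqref{diff-CAB-2} before finalizing the statement.
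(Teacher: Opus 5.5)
Your proposal is correct and follows the paper's own route. Part (i) goes through $\CAB(\cdot)y=(\SAB(\cdot)y)'+2\SAB(\cdot)By$ and Lemma~\ref{l4.2}; part (ii) uses Theorem~\ref{t3.1}/Lemma~\ref{l3.4}, \eqref{diff-CAB-1}, the first identity of \eqref{diff-SAB-3} with $y=Bx$, and then $(\CAB(t)x)''=A\CAB(t)x-2B(\CAB(t)x)'$. Your domain check via Lemma~\ref{l4.1} is a useful addition that the paper leaves implicit.

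Your sign correction is also right: the term should be $-4B\CAB(t)Bx$. The scalar case $A=a$, $B=b$ confirms this, since there $(\CAB(t))''=a\SAB'(t)=a\CAB(t)-2ab\SAB(t)$, whereas the printed \eqref{diff-CAB-3} would give $(a+8b^2)\CAB(t)-2ab\SAB(t)$.
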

\begin{proof}
(i) Fix $y \in\bY$. From Lemma~\ref{l4.2} (i) and (iii), we infer that $\CAB(\cdot)y = (\SAB(\cdot)y)' + 2\SAB(\cdot)By \in \mathcal{C}^1(\mathbb{R},\bX) \cap \mathcal{C}(\mathbb{R},\bY)$. Formula \eqref{diff-CAB-1} follows from \eqref{diff-SAB-3}.
	
(ii) It follows from Theorem~\ref{t3.1}  that $\CAB(\cdot)x \in \mathcal{C}^2(\mathbb{R},\bX) \cap \mathcal{C}^1(\mathbb{R},\bY) \cap \mathcal{C}(\mathbb{R},\dom(A))$ for any $x \in \dom(A)$. Formula \eqref{diff-CAB-2} follows from \eqref{diff-CAB-1} and \eqref{diff-SAB-3} with $y = Bx \in \bY$, for each $x \in \dom(A)$. Formula \eqref{diff-CAB-3} follows from \eqref{diff-CAB-2} since $\CAB(\cdot)x$ is a classical solution of \eqref{IVP-x0}.
\end{proof}
To simplify our notations we introduce $C_{A,B}' : \mathbb{R} \to \mathcal{B}(\bY,\bX)$, $S_{A,B}' : \mathbb{R} \to \mathcal{B}(\bX)$ by
\begin{align}\label{C-S-AB-deriv}
	C_{A,B}'(t)y &= \bigl(\CAB(t)y\bigr)', \; t \in \mathbb{R},\, y \in \bY, \\
	S_{A,B}'(t)x &= \bigl(\SAB(t)x\bigr)', \; t \in \mathbb{R},\, x \in \bX. 
\end{align}
\begin{remark}\label{r4.4}
	Assume \textnormal{(ABY1)--(ABY3)}. Then, 
	\begin{itemize}
		\item[(i)] From hypothesis (ABY3), Lemma~\ref{l4.1} and \eqref{diff-SAB-2} we infer that 
		\begin{equation}\label{r4.4.1}
			\SAB'(t)\bY\subseteq\bY \subseteq \dom(B)\;\mbox{and}\; \SAB'(t)\dom(A) \subseteq \dom(A)\;\mbox{for any}\; t \in \mathbb{R};
		\end{equation}	
		\item[(ii)] Similarly, from (ABY3), Lemma~\ref{l4.1} and \eqref{diff-SAB-2} we have
		\begin{equation}\label{r4.4.2}
			\CAB'(t)\dom(A) \subseteq \bY\;\mbox{for any}\; t \in \mathbb{R}.
		\end{equation}	
	\end{itemize}	
\end{remark}	
In the next lemma we express the blocks $T_{ij}(\cdot)$, $i,j = 1,2$ from the block decomposition of the $C_0$-group $\{\cT(t)\}$ given in \eqref{cal-T} in terms of the generalized sine and cosine families and their derivatives.
\begin{lemma}\label{4.4}
	Assume \textnormal{(ABY1)--(ABY3)}. Then,
	\begin{equation}\label{cT-row 1}
		T_{11}(t)y= \CAB(t)y - \SAB(t)By,\, T_{12}(t)x = \SAB(t)x \;\text{for any}\; t \in \mathbb{R},\, x\in\bX,\, y \in \bY;
	\end{equation}
	\begin{equation}\label{T21-Y}
		T_{21}(t)y= A\SAB(t)y - BC_{A,B}(t)y + 3B\SAB(t)By + (\SAB(t)By)' \; \text{for any}\; t \in \mathbb{R},\, y \in \bY;
	\end{equation}
	\begin{equation}\label{T21-X}
		T_{21}(t)x= A\SAB(t)x - BC_{A,B}(t)x + 3B\SAB(t)Bx 
		+ \CAB(t)Bx - 2\SAB(t)B^2x
	\end{equation}
	for any $t \in \mathbb{R}$, $x \in \dom(A)$;
	\begin{equation}\label{T22-X}
		T_{22}(t)x = (\SAB(t)x)' + 3\SAB(t)x \;\text{for any}\; t \in \mathbb{R},\, x\in\bX;
	\end{equation}
	\begin{align}\label{T22-Y}
		T_{22}(t)y &= \CAB(t)y + B\SAB(t)y - 2\SAB(t)By 
		\;\text{for any}\;  t \in \mathbb{R},\, y \in \bY. 
	\end{align}
\end{lemma}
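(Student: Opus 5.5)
The plan is to read off each block of $\cT(t)$ from identities already established, and to use the generator equation of $\{\cT(t)\}_{t\in\RR}$ for the lower-left block $T_{21}$. Where a pointwise argument is not available, I fall back on uniqueness of the Laplace transform on $\RR_\pm$.

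\emph{First row and $T_{22}$.} The identity $T_{12}(t)x=\SAB(t)x$ is \eqref{def-SAB}. The identity $T_{11}(t)y=\CAB(t)y-\SAB(t)By$ for $y\in\bY$ is \eqref{4.1.1}, rearranged. For $T_{22}$ on $\bX$, formula \eqref{diff-SAB-1} gives directly
$$T_{22}(t)x=\bigl(\SAB(t)x\bigr)'+B\SAB(t)x,\qquad x\in\bX.$$
This formula has $B\SAB(t)x$ where \eqref{T22-X} has $3\SAB(t)x$, and nothing earlier in the paper produces a term $3\SAB(t)x$. I therefore expect that \eqref{T22-X} should read $B\SAB(t)x$. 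If the check at $t=0$ or on the Laplace side confirms this, I would prove the corrected form. For $y\in\bY$, I substitute $\bigl(\SAB(t)y\bigr)'=\CAB(t)y-2\SAB(t)By$ from \eqref{diff-SAB-3} into the formula above, which yields \eqref{T22-Y}.

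\emph{$T_{21}$ on $\dom(A)$.} Fix $x\in\dom(A)$. Then $\binom{x}{0}\in\dom(G)$, so the trajectory $\cT(\cdot)\binom{x}{0}$ is $\mathcal{C}^1$ and satisfies $\frac{\rmd}{\rmd t}\cT(t)\binom{x}{0}=G\cT(t)\binom{x}{0}$. The first component of this equation reads
$$\bigl(T_{11}(t)x\bigr)'=-BT_{11}(t)x+T_{21}(t)x,\qquad\text{so}\qquad T_{21}(t)x=\bigl(T_{11}(t)x\bigr)'+BT_{11}(t)x.$$
I then insert $T_{11}(t)x=\CAB(t)x-\SAB(t)Bx$, where $Bx\in\bY$. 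Next I differentiate using \eqref{diff-CAB-2} for $\bigl(\CAB(t)x\bigr)'$ and \eqref{diff-SAB-3} with $y=Bx$ for $\bigl(\SAB(t)Bx\bigr)'$. Collecting terms gives
$$A\SAB x-2B\CAB x+4B\SAB Bx+2\CAB Bx-4\SAB B^2x-\CAB Bx+2\SAB B^2x+B\CAB x-B\SAB Bx,$$
which simplifies to \eqref{T21-X}. Lemma~\ref{l4.1} and Remark~\ref{r4.4} guarantee that every term here is well defined.

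\emph{$T_{21}$ on $\bY$ (main obstacle).} For $y\in\bY$ only, $\binom{y}{0}$ need not lie in $\dom(G)$, so the pointwise derivation above fails. I would proceed as follows.
\begin{itemize}
\item First, the right-hand side of \eqref{T21-Y} is continuous and exponentially bounded in $\bX$. This follows from \eqref{cal-T-continuity}, \eqref{growth-Tij} and Lemmas~\ref{l4.1}--\ref{l4.3}, together with the observation that $A\SAB(\cdot)y$ can be rewritten via \eqref{diff-SAB-3} as $\bigl(\SAB(\cdot)y\bigr)''+2B\CAB(\cdot)y-4B\SAB(\cdot)By$.
\item Second, I compute the Laplace transforms on $\RR_+$ and on $\RR_-$ with \eqref{r3.8.2}, using $\SAB(0)=0$ and $\SAB'(0)=I$ exactly as in the proof of Lemma~\ref{l4.2}(iv). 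The goal is to show both transforms equal $(\lambda I+B)Q^{-1}(\lambda)(\lambda I+B)y-y$, and the analogous expression at $-\lambda$.
\item Uniqueness of the Laplace transform then gives \eqref{T21-Y}.
\end{itemize}
As a shortcut, if the two sides of \eqref{T21-Y} agree on $\dom(A)$, which follows from \eqref{T21-X} and \eqref{diff-SAB-3}, then whenever $\dom(A)$ is dense in $\bY$ a continuity argument would suffice. Since density is not assumed, the Laplace-transform bookkeeping is the route I would actually carry out, and it is where I expect most of the work.
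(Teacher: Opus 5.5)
Your proposal is correct, and you are right that the coefficient $3$ in \eqref{T22-X} is a typo for $B$. The paper's own proof derives \eqref{T22-X} from \eqref{diff-SAB-1}, i.e.\ $T_{22}(t)x=(\SAB(t)x)'+B\SAB(t)x$. It later uses exactly this form, with $By$ in place of $x$, to pass from its intermediate identity \eqref{4.4.2} to \eqref{T21-Y}.

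You obtain the first row, \eqref{T22-X} and \eqref{T22-Y} as the paper does. For $T_{21}$ you reverse the paper's order:
\begin{itemize}
\item The paper first proves \eqref{T21-Y}. It compares Laplace transforms on $\RR_\pm$ of $T_{21}(\cdot)y-T_{22}(\cdot)By$ and $A\SAB(\cdot)y-B\CAB(\cdot)y+2B\SAB(\cdot)By$, so that no derivative term is transformed. It then adds $T_{22}(\cdot)By$ back via \eqref{T22-X}, and only afterwards reads off \eqref{T21-X} from \eqref{T21-Y} and \eqref{diff-SAB-2}.
\item You derive \eqref{T21-X} pointwise, from the first component of $\frac{\rmd}{\rmd t}\cT(t)\tbinom{x}{0}=G\cT(t)\tbinom{x}{0}$ together with \eqref{diff-CAB-2} and \eqref{diff-SAB-3}. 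This is a genuinely different argument, and your term count is right.
\item Your planned direct transform of the full right-hand side of \eqref{T21-Y} also closes. Using $\mathcal{L}\bigl((\SAB(\cdot)By)'\bigr)(\lambda)=\lambda Q^{-1}(\lambda)By$ and $AQ^{-1}(\lambda)=\lambda^2Q^{-1}(\lambda)-I+2\lambda BQ^{-1}(\lambda)$, one gets $\lambda^2Q^{-1}(\lambda)y-y+\lambda BQ^{-1}(\lambda)y+\lambda Q^{-1}(\lambda)By+BQ^{-1}(\lambda)By=(\lambda I+B)Q^{-1}(\lambda)(\lambda I+B)y-y$. The computation on $\RR_-$ is analogous.
\end{itemize}

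Your reason for discarding the density shortcut is mistaken, though; the shortcut is available and arguably cleaner. Density of $\dom(A)$ in $\bY$ is not an extra assumption, since $\dom(G)=\dom(A)\times\bY$ is dense in $\bY\times\bX$ by (ABY3).

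Moreover, every term of \eqref{T21-Y} defines an operator in $\mathcal{B}(\bY,\bX)$:
\begin{itemize}
\item For $B\CAB(t)$, $B\SAB(t)B$ and $\SAB'(t)B$ this follows from \eqref{r3.1.1}, \eqref{4.1.1} and \eqref{diff-SAB-1}.
\item For $A\SAB(t)$ it follows from $G\cT(t)\tbinom{0}{y}=\cT(t)\tbinom{y}{-By}$, whose components are $\SAB'(t)y\in\bY$ and $A\SAB(t)y-B\SAB'(t)y$. Do not cite \eqref{ASAB-YX} here, because the paper proves that estimate using \eqref{T21-Y}.
\end{itemize}
Hence \eqref{T21-X}, which on $\dom(A)$ is equivalent to \eqref{T21-Y} by \eqref{diff-SAB-2}, extends to all $y\in\bY$ by continuity, with no Laplace transforms at all.

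That same identity also supplies the continuity and exponential bound of $A\SAB(\cdot)y$. Any Laplace argument needs these in order to write $\mathcal{L}(A\SAB(\cdot)y)=AQ^{-1}(\cdot)y$, both yours and the paper's, which leaves this implicit. Your proposed rewriting through $(\SAB(\cdot)y)''$ does not provide the bound, since no estimate on that second derivative is available at this stage.
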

\begin{proof}
Identities \eqref{cT-row 1}	 follow from \eqref{r3.9.1}, \eqref{def-CAB} and \eqref{def-SAB}. Identity \eqref{T22-X}
follows from \eqref{diff-SAB-1}. Identity \eqref{T22-Y} follows from \eqref{T22-X} and \eqref{diff-SAB-3}. Next, we prove \eqref{T21-Y}. We recall that by Lemma~\ref{l3.2} and \eqref{3.3.1}, $Q(\lambda)$ is invertible with bounded inverse whenever $|\operatorname{Re}\lambda| > \omega$.
	Moreover,
	\begin{equation}\label{4.4.1}
		AQ^{-1}(\lambda) = \lambda^2 Q^{-1}(\lambda) - I + 2\lambda BQ^{-1}(\lambda)\;\mbox{whenever}\; \pm\lambda \in \mathbb{C}_\omega^+
	\end{equation}
	We claim that 
	\begin{equation}\label{4.4.2}
		T_{21}(t)y - T_{22}(t)By = A\SAB(t)y - BC_{A,B}(t)y + 2B\SAB(t)By\; \text{for any}\; t \in \mathbb{R},\, y \in \bY;
	\end{equation}
	Indeed, taking Laplace transform and using \eqref{r3.8.2} and \eqref{4.4.1}, we evaluate 
	\begin{align}\label{4.4.3}
		\mathcal{L}\big(T_{21}(\cdot)y - T_{22}(\cdot)By\big)(\lambda)
		&= (\lambda I+B)Q^{-1}(\lambda)(\lambda I+B)y - y - (\lambda I+B)Q^{-1}(\lambda)By \notag\\
		&= (\lambda I+B)Q^{-1}(\lambda)(\lambda y + By - By) - y
		= \lambda(\lambda I+B)Q^{-1}(\lambda)y - y \notag\\
		&= \lambda^2 Q^{-1}(\lambda)y - y + \lambda BQ^{-1}(\lambda)y,
	\end{align}
	\begin{align}\label{4.4.4}
		\mathcal{L}\big(A\SAB(\cdot)y &- BC_{A,B}(\cdot)y + 2B\SAB(\cdot)By\big)(\lambda)
		= AQ^{-1}(\lambda)y - BQ^{-1}(\lambda)(\lambda I+2B)y + 2BQ^{-1}(\lambda)By \notag\\
		&= \lambda^2 Q^{-1}(\lambda)y - y + 2\lambda BQ^{-1}(\lambda)y - \lambda BQ^{-1}(\lambda)y
		- 2BQ^{-1}(\lambda)By + 2BQ^{-1}(\lambda)By \notag\\
		&= \lambda^2 Q^{-1}(\lambda)y - y + \lambda BQ^{-1}(\lambda)y, 
	\end{align}
	\begin{align}\label{4.4.5}
		\mathcal{L}\big(T_{21}(-\cdot)y - T_{22}(-\cdot)By\big)(\lambda)
		&= -(\lambda I-B)Q^{-1}(-\lambda)(\lambda I-B)y + y + (\lambda I-B)Q^{-1}(-\lambda)By \notag\\
		&= -(\lambda I-B)Q^{-1}(-\lambda)(\lambda y - By + By) + y
		= -\lambda(\lambda I-B)Q^{-1}(-\lambda)y + y \notag\\
		&= -\lambda^2 Q^{-1}(-\lambda)y + y + \lambda BQ^{-1}(-\lambda)y, 
	\end{align}
	\begin{align}\label{4.4.6}
		\mathcal{L}\big(A\SAB(-\cdot)y &- BC_{A,B}(-\cdot)y + 2B\SAB(-\cdot)By\big)(\lambda)\notag\\
		&= -AQ^{-1}(-\lambda)y - BQ^{-1}(-\lambda)(\lambda I-2B)y - 2BQ^{-1}(-\lambda)By \notag\\
		&= -\lambda^2 Q^{-1}(-\lambda)y + y + 2\lambda BQ^{-1}(-\lambda)y
		- \lambda BQ^{-1}(-\lambda)y + 2BQ^{-1}(-\lambda)By \notag\\
		&\quad - 2BQ^{-1}(-\lambda)By
		- 2BQ^{-1}(-\lambda)By \notag\\
		&= -\lambda^2 Q^{-1}(-\lambda)y + y + \lambda BQ^{-1}(-\lambda)y 
	\end{align}
	for any $\lambda \in \mathbb{C}_\omega^+$, $y \in \bY$. Claim \eqref{4.4.2} follows from \eqref{4.4.3}--\eqref{4.4.6}. Identity \eqref{T21-Y} follows from \eqref{T22-X} and \eqref{4.4.2}. Identity \eqref{T21-X} follows from \eqref{T21-Y} and \eqref{diff-SAB-2}.
\end{proof}	
In the next lemma we collect several exponential growth estimates for the generalized cosine and sine families and their first order derivatives. 
\begin{lemma}\label{l4.5}
	Assume \textnormal{(ABY1)--(ABY3)}. Then there exists $\overline{M} > 0$ such that
	\begin{align}
		\|\SAB(t)x\|_\bY &\leq \overline{M}e^{\omega|t|}\|x\|\;\;\text{for any } t \in \mathbb{R},\, x \in \bX,\label{SAB-XY} \\
		\|B\SAB(t)x\| &\leq \overline{M}e^{\omega|t|}\|x\| \;\;\text{for any } t \in \mathbb{R},\, x \in \bX, \label{BSAB-XX}\\
		\|B\SAB(t)By\| &\leq \overline{M}e^{\omega|t|}\|y\|_\bY \;\;\text{for any } t \in \mathbb{R},\, y \in \bY, \label{BSABB-YX}\\
		\|A\SAB(t)y\| &\leq \overline{M}e^{\omega|t|}\|y\|_\bY\;\;\text{for any } t \in \mathbb{R},\, y \in \bY, \label{ASAB-YX}\\
		\|S_{A,B}'(t)x\| &\leq \overline{M}e^{\omega|t|}\|x\|\;\;\text{for any } t \in \mathbb{R},\, x \in \bX, \label{SAB-prime-XX} \\
		\|S_{A,B}'(t)y\|_\bY &\leq \overline{M}e^{\omega|t|}\|y\|_\bY\;\;\text{for any } t \in \mathbb{R},\, y \in \bY, \label{SAB-prime-YY} \\
		\|\CAB(t)x\| &\leq \overline{M}e^{\omega|t|}\|x\|_{\dom(B)}\;\;\text{for any } t \in \mathbb{R},\, x \in \dom(B), \label{CAB-XX}\\
		\|\CAB(t)y\|_\bY &\leq \overline{M}e^{\omega|t|}\|y\|_\bY\;\;\text{for any } t \in \mathbb{R},\, y \in \bY, \label{CAB-YY}\\
		\|B\CAB(t)y\| &\leq \overline{M}e^{\omega|t|}\|y\|_\bY\;\;\text{for any } t \in \mathbb{R},\, y \in \bY, \label{BCAB-YX}\\
		\|C_{A,B}'(t)y\| &\leq \overline{M}e^{\omega|t|}\|y\|_\bY\;\;\text{for any } t \in \mathbb{R},\, y \in \bY\label{CAB-prime-YX},\\
		\|C_{A,B}'(t)x\|_\bY &\leq Me^{\omega|t|}\|Ax\|, \;\;\text{for any } t \in \mathbb{R}, x \in \dom(A)\label{CAB-prime-domainAY},\\
		\|A\CAB(t)x\| &\leq \overline{M}e^{\omega|t|}\|Ax\|\;\;\text{for any } t \in \mathbb{R}, x \in \dom(A)\label{ACAB-prime-domainAX}.
	\end{align}
\end{lemma}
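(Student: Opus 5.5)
The plan is to read every estimate off the block decomposition \eqref{cal-T}, the growth bound \eqref{growth-Tij}, and the formulas \eqref{def-CAB}, \eqref{def-SAB}, \eqref{4.1.1}, \eqref{diff-SAB-1}, \eqref{diff-SAB-2} and \eqref{diff-CAB-1}. I will also use that $B|_\bY\in\mathcal{B}(\bY,\bX)$ and $B|_{\dom(A)}\in\mathcal{B}(\dom(A),\bY)$ (Remark~\ref{r3.1}). The one extra tool is the standard fact that $\{\cT(t)\}_{t\in\RR}$ restricts to a $C_0$-group on $\dom(G)$ with the graph norm, with the same exponent $\omega$: indeed $G\cT(t)z=\cT(t)Gz$ for $z\in\dom(G)$. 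Since $G$ is closed and $\dom(G)=\dom(A)\times\bY$ is a Banach space continuously embedded in $\bY\times\bX$, the closed graph theorem makes the graph norm of $G$ equivalent to the norm of $\dom(A)\times\bY$. Hence
\[
\|\cT(t)z\|_{\dom(A)\times\bY}\le M_1e^{\omega|t|}\|z\|_{\dom(A)\times\bY},\qquad z\in\dom(G).
\]

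The \enquote{first-order} estimates are then immediate. Estimate \eqref{SAB-XY} is the $T_{12}$ bound in \eqref{growth-Tij}. Estimate \eqref{BSAB-XX} follows by composing with $B|_\bY$, and \eqref{BSABB-YX} follows from \eqref{BSAB-XX} applied to $By$ together with $\|By\|\le\|B|_\bY\|\|y\|_\bY$. For \eqref{SAB-prime-XX} I use \eqref{diff-SAB-1}: $S'_{A,B}(t)x=T_{22}(t)x-B\SAB(t)x$. For \eqref{CAB-XX} I use \eqref{def-CAB}, which gives a bound by $\|x\|+\|Bx\|$. For \eqref{CAB-YY} I use \eqref{4.1.1}, $\CAB(t)y=T_{11}(t)y+T_{12}(t)By$, with the $\bY$-bounds on $T_{11},T_{12}$. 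Then \eqref{BCAB-YX} is \eqref{CAB-YY} composed with $B|_\bY$, and \eqref{SAB-prime-YY} follows from $S'_{A,B}(t)y=\CAB(t)y-2\SAB(t)By$ via \eqref{CAB-YY} and \eqref{SAB-XY}.

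The main obstacle is the estimates involving $A$, namely \eqref{ASAB-YX}, \eqref{CAB-prime-domainAY} and \eqref{ACAB-prime-domainAX}. For these I use the graph-norm group above. For $y\in\bY$, $z=\binom{0}{y}\in\dom(G)$ and $\|z\|_{\dom(A)\times\bY}=\|y\|_\bY$. The first component gives $\|T_{12}(t)y\|_{\dom(A)}\le M_1e^{\omega|t|}\|y\|_\bY$, which yields \eqref{ASAB-YX}.

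For $x\in\dom(A)$ I take $z=\binom{x}{Bx}\in\dom(G)$. By \eqref{4.1.1}, $\CAB(t)x$ is the first component of $\cT(t)z$. Hence
\[
C'_{A,B}(t)x=\bigl[\cT(t)Gz\bigr]_1, \qquad Gz=\tbinom{0}{(A+B^2)x-B^2x}=\tbinom{0}{Ax}.
\]
The bound on $T_{12}$ in \eqref{growth-Tij} then gives exactly $\|C'_{A,B}(t)x\|_\bY\le Me^{\omega|t|}\|Ax\|$, which explains the constant $M$ in \eqref{CAB-prime-domainAY}.

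For \eqref{ACAB-prime-domainAX}, the graph-norm group bounds $\|\CAB(t)x\|_{\dom(A)}$ by $\|x\|_{\dom(A)}+\|Bx\|_\bY\lesssim\|x\|_{\dom(A)}$. If only $\|Ax\|$ is to appear, I would instead write $A\CAB(t)x$ through the classical equation, using $C''_{A,B}$ and $C'_{A,B}$ together with the previous bound. Failing that, I would interpret $\|Ax\|$ as the graph norm; I expect this is the delicate point to check.

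Finally, \eqref{CAB-prime-YX} follows from \eqref{diff-CAB-1}. Each of its terms has already been bounded: $A\SAB(t)y$ by \eqref{ASAB-YX}; $B\CAB(t)y$ by \eqref{BCAB-YX}; $B\SAB(t)By$ by \eqref{BSABB-YX}; and $(\SAB(t)By)'=S'_{A,B}(t)By$ by \eqref{SAB-prime-XX} together with $\|By\|\lesssim\|y\|_\bY$. Taking $\overline{M}$ as the maximum of all the constants obtained completes the proof.
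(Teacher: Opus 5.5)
\textbf{Verdict: there is a genuine gap in your proof of \eqref{ACAB-prime-domainAX}; the other eleven estimates are correct.}

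Ten of them follow the paper's proof, and your argument for \eqref{CAB-prime-domainAY} (first component of $\cT(t)Gz$ with $z=\tbinom{x}{Bx}$) matches it as well.

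For \eqref{ASAB-YX} you take a different route. The paper solves the identity \eqref{T21-Y} for $A\SAB(t)y$, an identity obtained by a Laplace-transform computation, and bounds the remaining terms. You instead restrict $\{\cT(t)\}$ to $\dom(G)$ with the graph norm and read the bound off the first component of $\cT(t)\tbinom{0}{y}$. This is valid: $G\in\mathcal{B}(\dom(A)\times\bY,\bY\times\bX)$ by Remark~\ref{r3.1}, and $G$ is closed, so the two norms on $\dom(G)$ are equivalent. Your route bypasses \eqref{T21-Y} entirely, at the cost of an unspecified equivalence constant, which is harmless here.

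\textbf{The gap.} Your graph-norm argument only gives $\|A\CAB(t)x\|\lesssim e^{\omega|t|}\|x\|_{\dom(A)}$. This is strictly weaker than the claimed bound by $\|Ax\|$ alone; the claimed bound forces, for example, $A\CAB(t)x=0$ whenever $Ax=0$. Reinterpreting $\|Ax\|$ as the graph norm would change the statement. Your fallback through the equation is unfinished: you control $2BC'_{A,B}(t)x$ by \eqref{CAB-prime-domainAY}, but say nothing about $C''_{A,B}(t)x$.

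\textbf{The fix is already in your own computation.} You showed
\[
C'_{A,B}(t)x=\bigl[\cT(t)\tbinom{0}{Ax}\bigr]_1=T_{12}(t)Ax=\SAB(t)Ax .
\]
Hence $C''_{A,B}(t)x=S'_{A,B}(t)Ax$, and
\[
A\CAB(t)x=C''_{A,B}(t)x+2BC'_{A,B}(t)x=S'_{A,B}(t)Ax+2B\SAB(t)Ax .
\]
By \eqref{SAB-prime-XX} and \eqref{BSAB-XX} this is bounded by $3\overline{M}e^{\omega|t|}\|Ax\|$.

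Alternatively, as the paper does, use the \emph{second} component of $\cT(t)Gz=G\cT(t)z$. Since $\cT(t)z=\tbinom{\CAB(t)x}{C'_{A,B}(t)x+B\CAB(t)x}$, that component equals
\[
A\CAB(t)x-BC'_{A,B}(t)x=T_{22}(t)Ax .
\]
This gives $\|A\CAB(t)x\|\le M(1+\|B|_\bY\|)e^{\omega|t|}\|Ax\|$.
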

\begin{proof}
Estimate \eqref{SAB-XY}	follows from \eqref{growth-Tij} and \eqref{def-SAB}. Estimate \eqref{BSAB-XX} follows from \eqref{r3.1.1} and \eqref{SAB-XY}. Similarly, estimate \eqref{BSABB-YX} follows \eqref{r3.1.1} and \eqref{BSAB-XX}.
	
From \eqref{growth-Tij} and \eqref{diff-SAB-2} we obtain that 
	\begin{equation}\label{4.5.1}
		\|S_{A,B}'(t)x\| = \|T_{22}(t)x - BT_{12}(t)x\| \leq \|T_{22}(t)x\| + \|B|_\bY T_{12}(t)x\|
		\leq M(1+\|B|_\bY\|)e^{\omega|t|}\|x\|
	\end{equation}
for any $t \in \mathbb{R}$, $x \in\bX$, proving \eqref{SAB-prime-XX}. Next, we recall that there exists $c_1 > 0$ such that
$\|y\| \leq c_1 \|y\|_\bY$ for any $y \in \bY$, since $\bY \hookrightarrow\bX$.
It follows from \eqref{r3.1.1}, \eqref{growth-Tij} and \eqref{def-CAB} that
	\begin{align}\label{4.5.2}
		\|\CAB(t)x\|
		&= \|T_{22}(t)x - BT_{12}(t)x + 2T_{12}(t)Bx\| \leq \|T_{22}(t)x\| + \|B|_\bY T_{12}(t)x\|_\bY + 2\|T_{12}(t)Bx\| \notag\\
		&\leq Me^{\omega|t|}\|x\|(1+\|B|_\bY\|) + 2Mc_1 e^{\omega|t|}\|Bx\| \leq M\max\{1+\|B|_\bY\|,\, c_1\}\,e^{\omega|t|}(\|x\|+\|Bx\|) 
	\end{align}
	for any $t \in \mathbb{R}$, $x \in \dom(B)$, proving \eqref{CAB-XX}. Similarly, using \eqref{r3.1.1}, \eqref{growth-Tij} and \eqref{4.1.1} we estimate
	\begin{align}\label{4.5.3}
		\|\CAB(t)y\|_\bY
		&= \|T_{11}(t)y + T_{12}(t)By\|_\bY \leq \|T_{11}(t)y\|_\bY + \|T_{12}(t)By\|_\bY \notag\\
		&\leq Me^{\omega|t|}\|y\|_\bY + Me^{\omega|t|}\|By\|
		\leq M(1+\|B|_\bY\|)e^{\omega|t|}\|y\|_\bY 
	\end{align}
	for any $t \in \mathbb{R}$, $y \in \bY$, proving \eqref{CAB-YY}. Estimate \eqref{BCAB-YX} follows from \eqref{r3.1.1} and \eqref{CAB-YY}. 
	Using again \eqref{r3.1.1}, \eqref{growth-Tij}, \eqref{diff-SAB-2} and \eqref{4.5.3} we infer that
	\begin{align}\label{4.5.4}
		\|S_{A,B}'(t)y\|_\bY&= \|\CAB(t)y - 2\SAB(t)By\|_\bY \leq \|\CAB(t)y\|_\bY + 2\|T_{12}(t)By\|_\bY\notag\\
		&
		\leq M(1+\|B|_\bY\|)e^{\omega|t|}\|y\|_\bY + 2Me^{\omega|t|}\|By\|\leq M(1+3\|B|_\bY\|)e^{\omega|t|}\|y\|_\bY 
	\end{align}
	for any $t \in \mathbb{R}$, $y \in \bY$, proving \eqref{SAB-prime-YY}. 
	From \eqref{r3.1.1}, \eqref{growth-Tij}, \eqref{T21-Y},  \eqref{BSABB-YX}, \eqref{SAB-prime-XX} and \eqref{BCAB-YX} we conclude that
	\begin{align}\label{4.5.5}
		\|A\SAB(t)y\|&=\|T_{21}(t)y+B\CAB(t)y-3B\SAB(t)By+\SAB'(t)By\|\nonumber\\
		&\leq\|T_{21}(t)y\|+\|B\CAB(t)y\|+3\|B\SAB(t)By\|+\|\SAB'(t)By\|\nonumber\\
		&\leq Me^{\omega|t|}\|y\|_\bY+ 4\overline{M}e^{\omega|t|}\|y\|_\bY+\overline{M}e^{\omega|t|}\|By\|\leq \big(M+4\overline{M}+\overline{M}\|B_\bY\|\big)e^{\omega|t|}\|y\|_\bY
	\end{align}
	for any $t \in \mathbb{R}$, $y \in \bY$. Estimate \eqref{ASAB-YX} follows by replacing $\overline{M}$ by $M+4\overline{M}+\overline{M}\|B_\bY\|$. Finally, using \eqref{r3.1.1}, \eqref{growth-Tij}, \eqref{T21-Y}, \eqref{BSABB-YX}, \eqref{ASAB-YX}, \eqref{SAB-prime-XX} and \eqref{BCAB-YX} we estimate
	\begin{align}\label{4.5.6}
		\|\CAB'(t)y\|&=\|A\SAB(t)y-2B\CAB(t)y+4B\SAB(t)By+2\SAB'(t)By\|\nonumber\\&\leq\|A\SAB(t)y\|+2\|B\CAB(t)y\|+4\|B\SAB(t)By\|+2\|\SAB'(t)By\|\nonumber\\&\leq (6\overline{M}+2\overline{M}\|B_\bY\|)e^{\omega|t|}\|y\|_\bY
	\end{align}
	for any $t \in \mathbb{R}$, $y \in \bY$. Estimate \eqref{CAB-prime-YX} follows by replacing $\overline{M}$ by $6\overline{M}+2\overline{M}\|B_\bY\|$.
	
	Next, we fix $x \in \dom(A)$. Then $Bx \in \bY$, thus $\binom{x}{Bx} \in \dom(A) \times \bY = \dom(G)$. We infer that 
	$\cT(t)\binom{x}{Bx} \in \dom(G) = \dom(A) \times \bY$ for any $t\in\RR$. From \eqref{cal-T}, \eqref{T21-X} and \eqref{T22-Y} we obtain 
	\begin{equation}\label{4.5.7}
		\cT(t)\binom{x}{Bx}
		= \begin{pmatrix} T_{11}(t)x + T_{12}(t)Bx \\ T_{21}(t)x + T_{22}(t)Bx \end{pmatrix}
		= \begin{pmatrix} \CAB(t)x \\ C_{A,B}'(t)x + B\CAB(t)x \end{pmatrix} \;\mbox{for any}\; t \in \mathbb{R},
	\end{equation}	
	and thus
	\begin{align}\label{4.5.8}
		G\cT(t)\binom{x}{Bx}
		&= \begin{bmatrix} -B & I \\ L & -B \end{bmatrix}
		\begin{pmatrix} \CAB(t)x \\ C_{A,B}'(t)x + B\CAB(t)x \end{pmatrix}
		= \begin{pmatrix} C_{A,B}'(t)x \\ A\CAB(t)x - BC_{A,B}'(t)x \end{pmatrix}\nonumber\\
		&  = \begin{pmatrix} C_{A,B}'(t)x \\ A\CAB(t)x - BC_{A,B}'(t)x \end{pmatrix} \;\mbox{for any}\; t \in \mathbb{R}.	
	\end{align}	
	Moreover, one can readily check that 
	\begin{equation}\label{4.5.9}
		G\binom{x}{Bx}
		= \begin{bmatrix} -B & I \\ L & -B \end{bmatrix}\binom{x}{Bx}
		= \binom{0}{Ax}.	
	\end{equation}	
	From \eqref{growth-cal-T}, \eqref{4.5.8} and \eqref{4.5.9} we obtain
	\begin{align}\label{4.5.10}
		\|C_{A,B}'(t)x\|_\bY &+ \|A\CAB(t)x - BC_{A,B}'(t)x\|
		= \|G\cT(t)\tbinom{x}{Bx}\|_{\bY\times\bX} = \|\cT(t)G\tbinom{x}{Bx}\|_{\bY\times\bX} \nonumber\\
		&= \left\|\cT(t)\tbinom{0}{Ax}\right\|_{\bY\times\bX}
		\leq Me^{\omega|t|}\left\|\tbinom{0}{Ax}\right\|_{\bY\times\bX}
		= Me^{\omega|t|}\|Ax\|\;\mbox{for any}\; t \in \mathbb{R}.	
\end{align}
Estimate \eqref{CAB-prime-domainAY} follows immediately from \eqref{4.5.10}. Moreover, from \eqref{r3.1.1} and  \eqref{CAB-prime-domainAY} \eqref{4.5.10} we conclude that 
\begin{align}\label{4.5.11}
		\|A\CAB(t)x\|
		&\leq \|A\CAB(t)x - BC_{A,B}'(t)x\| + \|BC_{A,B}'(t)x\| \leq Me^{\omega|t|}\|Ax\| + \|B|_Y\|\|C_{A,B}'(t)x\|_\bY \nonumber\\
		&\leq Me^{\omega|t|}\|Ax\| + \|B|_\bY\|Me^{\omega|t|}\|Ax\|
		= (1+\|B|_\bY\|)Me^{\omega|t|}\|Ax\|
	\end{align}
	for any $t \in \mathbb{R}$, proving the estimate \eqref{ACAB-prime-domainAX}.
\end{proof}	
In the next lemma we show that $\dom(A)$ is dense in $\bX$ and $\bY$, with the respective norms. 
\begin{lemma}\label{l4.6}
	Assume \textnormal{(ABY1)--(ABY3)}. Then,
	\begin{itemize}
		\item[(i)] $\displaystyle\lim_{t \to 0} \frac{1}{t} \SAB(t)x = x$ in $\bX$ for any $x \in \bX$;
		\item[(ii)] $\displaystyle\lim_{t \to 0} \frac{1}{t} \SAB(t)y = y$ in $\bY$ for any $y \in \bY$;
		\item[(iii)] $\displaystyle\lim_{t \to 0} \frac{1}{t^m} \SAB^m(t)x = x$ in $\bX$ for any $x \in \bX$ and $m\in\NN$;
		\item[(iv)] $\dom(A)$ is dense in $\bX$ in $\bX$-norm;
		\item[(v)] $\dom(A)$ is dense in $\bY$ in $\bY$-norm.
	\end{itemize}
\end{lemma}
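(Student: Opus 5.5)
For (i) and (ii) I will use the fact from Lemma~\ref{l4.2}(i) that $\SAB(\cdot)x\in\mathcal{C}^1(\RR,\bX)$ for every $x\in\bX$, with $\SAB(0)=0$ and $\SAB'(0)x=T_{22}(0)x-B\SAB(0)x=x$. Then
\[
\tfrac1t\SAB(t)x-x=\tfrac1t\int_0^t\bigl(\SAB'(s)x-x\bigr)\,\rmd s\to0
\]
in $\bX$ by continuity of $\SAB'(\cdot)x$, which gives (i). For (ii), with $y\in\bY$, Lemma~\ref{l4.2}(iii) gives $\SAB(\cdot)y\in\mathcal{C}^1(\RR,\bY)$, and its $\bY$-valued derivative coincides with the $\bX$-valued one because $\bY\hookrightarrow\bX$. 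Its value at $0$ is $\CAB(0)y-2\SAB(0)By=y$. The same averaging argument, now carried out in $\bY$, proves (ii).

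For (iii) I will first note that $\SAB(t)\in\mathcal{B}(\bX)$, with $\|\SAB(t)\|_{\mathcal{B}(\bX)}\le c_1\overline{M}e^{\omega|t|}$ by \eqref{SAB-XY}. Hence $\|t^{-1}\SAB(t)\|_{\mathcal{B}(\bX)}\le K$ for $0<|t|\le1$, because $t^{-1}\SAB(t)=t^{-1}\int_0^t\SAB'(s)\,\rmd s$ and \eqref{SAB-prime-XX} bounds $\|\SAB'(s)\|$ uniformly. I then induct on $m$ using the telescoping identity
\[
t^{-m}\SAB^m(t)x-x=\bigl(t^{-1}\SAB(t)\bigr)\bigl(t^{-(m-1)}\SAB^{m-1}(t)x-x\bigr)+\bigl(t^{-1}\SAB(t)x-x\bigr).
\]
The first term tends to $0$ by the uniform bound $K$ and the induction hypothesis, and the second tends to $0$ by (i).

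For (iv) the idea is that powers of the sine map $\bX$ into $\dom(A)$. By Lemma~\ref{l4.1}(iii),(iv), $\SAB(t)\bX\subseteq\bY$ and $\SAB(t)\bY\subseteq\dom(A)$, so $\SAB^2(t)x\in\dom(A)$ for every $x\in\bX$. Taking $m=2$ in (iii) gives $t^{-2}\SAB^2(t)x\to x$ in $\bX$, which proves density. For (v), take $y\in\bY$. Then $t^{-1}\SAB(t)y\in\dom(A)$ by Lemma~\ref{l4.1}(iv), and it converges to $y$ in the $\bY$-norm by (ii), so $\dom(A)$ is dense in $\bY$.

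The main point needing care is (ii). There I must make sure the derivative of $\SAB(\cdot)y$ in the $\bY$-topology is continuous at $0$ and equals $y$. This follows from the inclusion $\mathcal{C}^1(\RR,\bY)$ in Lemma~\ref{l4.2}(iii), which comes from Theorem~\ref{t3.1} and Lemma~\ref{l3.4}, together with uniqueness of derivatives under the continuous embedding. In (iii), the only other subtle step is the uniform bound on $t^{-1}\SAB(t)$ near $t=0$, which \eqref{SAB-prime-XX} supplies.
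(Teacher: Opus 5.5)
Your proposal is correct and follows the paper's proof essentially step for step: (i)--(ii) from the $\mathcal{C}^1$ regularity in Lemma~\ref{l4.2} with derivative $I$ at $t=0$, (iii) by induction through the same telescoping identity with the uniform bound on $t^{-1}\SAB(t)$ coming from \eqref{SAB-prime-XX}, and (iv)--(v) from $\SAB(t)\bX\subseteq\bY$ and $\SAB(t)\bY\subseteq\dom(A)$. Your explicit checks that $\SAB'(0)x=x$ and that $\SAB(\cdot)y$ has $\bY$-derivative $y$ at $0$ (via Lemma~\ref{l4.2}(iii)), together with your use of Lemma~\ref{l4.1}(iv) for (v), are actually more precise than the paper's own citations at those points: the paper writes $\SAB(0)x=T_{22}(0)x$ where $\SAB'(0)$ is meant, and cites Lemma~\ref{l4.2}(ii) and Lemma~\ref{l4.1}(i) instead.
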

\begin{proof}
Assertion (i) follows from Lemma~\ref{l4.2}(i) since $\SAB(0)x = T_{22}(0)x = x$ for any $x\in\bX$. Assertion (ii) follows from Lemma~\ref{l4.2}(ii). From Lemma~\ref{l4.2}(i) and \eqref{SAB-prime-XX} we have
\begin{align}\label{4.6.1}
		\left\| \frac{1}{t} \SAB(t) x \right\|
		&= \left\| \frac{1}{t} \int_0^t \SAB'(s) x \, \rmd s \right\|
		\leq \frac{1}{|t|} \int_0^{|t|} \|\SAB'(s) x\| \, \rmd s\nonumber\\
		&\leq \frac{\overline{M}}{|t|} \left( \int_0^{|t|} e^{\omega s} \, \rmd s \right) \|x\|= \overline{M} \frac{e^{\omega|t|} - 1}{\omega|t|} \|x\|
		\; \text{for any}\; t \neq 0,\, x \in \bX.
	\end{align}
	It follows that
	\begin{align}\label{4.6.2}
		\left\| \frac{1}{t^{m+1}} \SAB^{m+1}(t) x - x \right\|&\leq\left\| \frac{1}{t^{m+1}} \SAB^{m+1}(t) x-\frac{1}{t} \SAB(t) x\right\|+\left\| \frac{1}{t} \SAB(t) x - x \right\|\nonumber\\
		&  \leq \left\| \frac{1}{t} \SAB(t) \Big( \tfrac{1}{t^m}\SAB^m(t)x - x \Big) \right\|
		+ \left\| \frac{1}{t} \SAB(t) x - x \right\|\nonumber\\
		& \leq \left( \overline{M} \cdot \frac{e^{\omega|t|}-1}{\omega|t|} + 1 \right)\left\|\tfrac{1}{t^m}\SAB^m(t)x - x \right\|+
		\left\| \frac{1}{t} \SAB(t) x - x \right\|
	\end{align}
	for any $t\ne 0$, $x \in\bX$. Assertion (iii) follows from (i) and \eqref{4.6.2} by induction. From Lemma~\ref{l4.1}(iii) and (iv),  we obtain $\SAB^2(t)x = \SAB(t)\SAB(t)x \in \dom(A)$ for any $t \in \mathbb{R}$, $x \in\bX$. Assertion (iv) follows shortly from (iii). Assertion (v) follows from (ii) and Lemma~\ref{l4.1}(i).
\end{proof}
In the case of cosine families and their associated sine families it is well-known that by integrating their trajectories we obtain smoother functions, see, e.g., \cite[Proposition 3.14.5]{ABHN}. In the next lemma we aim to prove a similar result for the case of the generalized cosine and sine $\{\CAB(t)\}_{t\in\RR}$ and $\{\SAB(t)\}_{t\in\RR}$ given in \eqref{def-CAB} and \eqref{def-SAB}, respectively.
\begin{lemma}\label{l4.7}
	Assume \textnormal{(ABY1)--(ABY3)}. Then,
	\begin{itemize}
		\item[(i)] $\displaystyle\int_0^t \SAB(s)x \, \rmd s \in \dom(A)$ and
		\begin{equation}\label{A-int-SAB}
			A \int_0^t \SAB(s)x \, \rmd s = \SAB'(t)x + 2B\SAB(t)x-x\;\mbox{for any}\;t\in\RR,\,x\in\bX;
		\end{equation}
		\item[(ii)] $\displaystyle\int_0^t \CAB(s)x \, \rmd s \in \bY$ and
		\begin{equation}\label{B-int-CAB}
			B\int_0^t \CAB(s)x \, \rmd s = B\SAB(t)x + 2\int_0^t B\SAB(s)Bx \, \rmd s\;\mbox{for any}\;t\in\RR,\,x\in\dom(B);
		\end{equation}
		\item[(iii)] $\displaystyle\int_0^t \CAB(s)y \, ds \in \dom(A)$ and
		\begin{align}\label{B-int-CAB-caseY}
			&A\int_0^t \CAB(s)y \, \rmd s=A\SAB(t)y+2\SAB'(t)By+4B\SAB(t)By-2By\nonumber\\
			&= \CAB'(t)y + 2BC_{A,B}(t)y -2By \;\mbox{for any}\;t\in\RR,\,x\in\bY;
		\end{align}
		\item[(iv)] $\displaystyle\int_0^t (t-s)\SAB(s)x \, \rmd s \in \dom(A)$ and
		\begin{equation}\label{A-int-t-SAB}
			A\int_0^t (t-s)\SAB(s)x \, \rmd s = \SAB(t)x + 2\int_0^t B\SAB(s)x \, \rmd s - tx\;\mbox{for any}\;t\in\RR,\,x\in\bX;
		\end{equation}
		\item[(v)] $\displaystyle\int_0^t (t-s)\CAB(s)x \, \rmd s \in \dom(A)$ and
		\begin{equation}\label{A-int-t-CAB}
			A\int_0^t (t-s)\CAB(s)x \, \rmd s = \CAB(t)x + 2B\int_0^t \CAB(s)x \, \rmd s - x - 2tBx
		\end{equation}
		for any $t \in \mathbb{R}$, $x \in \dom(B)$.
	\end{itemize}
\end{lemma}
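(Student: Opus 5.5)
The plan is to derive everything from the elementary $C_0$-group identity
\[
\int_0^t\cT(s)z\,\rmd s\in\dom(G),\qquad G\int_0^t\cT(s)z\,\rmd s=\cT(t)z-z,\qquad z\in\bY\times\bX,
\]
applied with $z=\binom{0}{x}$. Then the other assertions follow by integration and by the closedness of $A$ and $B$. Part (i) is the only step that uses the group directly, and I expect it to be the main obstacle.

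For (i), fix $x\in\bX$. By \eqref{cal-T} and \eqref{def-SAB} we have $\int_0^t\cT(s)\binom{0}{x}\rmd s=\binom{\int_0^t\SAB(s)x\,\rmd s}{\int_0^tT_{22}(s)x\,\rmd s}\in\dom(A)\times\bY$. This already gives $\int_0^t\SAB(s)x\,\rmd s\in\dom(A)$. Reading off the second component of the identity above gives
\[
L\int_0^t\SAB x-B\int_0^tT_{22}x=T_{22}(t)x-x.
\]
Next I would use \eqref{diff-SAB-1}, namely $T_{22}(s)x=\SAB'(s)x+B\SAB(s)x$. Integrating gives $\int_0^tT_{22}x=\SAB(t)x+\int_0^tB\SAB x$. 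Since $B$ is closed and $B\SAB(\cdot)x$ is continuous by \eqref{r3.1.1}, this equals $\SAB(t)x+B\int_0^t\SAB x$. Applying $B$ once more, which is legitimate because $\int_0^t\SAB x\in\dom(A)\subseteq\dom(B^2)$, yields $B\int_0^tT_{22}x=B\SAB(t)x+B^2\int_0^t\SAB x$. Substituting into the displayed identity and recalling $L=A+B^2$, the $B^2$ terms cancel. This leaves
\[
A\int_0^t\SAB x=T_{22}(t)x-x+B\SAB(t)x=\SAB'(t)x+2B\SAB(t)x-x,
\]
which is \eqref{A-int-SAB}. The delicate points are exactly these closedness and domain justifications: that $B\int=\int B$, and that $B$ may be applied to $\int_0^tT_{22}x\in\bY\subseteq\dom(B)$.

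For (iv), I would write $\int_0^t(t-s)\SAB(s)x\,\rmd s=\int_0^t\big(\int_0^r\SAB(s)x\,\rmd s\big)\rmd r$. By (i), the inner integral lies in $\dom(A)$, and $A$ applied to it equals $\SAB'(r)x+2B\SAB(r)x-x$, which is continuous in $r$ by Lemma~\ref{l4.2}(i) and \eqref{r3.1.1}. Closedness of $A$ then lets me pass $A$ through the outer integral, giving $\SAB(t)x+2\int_0^tB\SAB x-tx$.

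For (ii), (iii) and (v), I would use \eqref{diff-SAB-2} in the form $\CAB(s)x=\SAB'(s)x+2\SAB(s)Bx$ for $x\in\dom(B)$. Integrating gives $\int_0^t\CAB x=\SAB(t)x+2\int_0^t\SAB(s)Bx\,\rmd s$.
\begin{itemize}
\item[(ii)] The first term lies in $\bY$ by Lemma~\ref{l4.1}(iii), and the second lies in $\dom(A)\subseteq\bY$ by (i). Applying the closed operator $B$, and using continuity of $B\SAB(\cdot)Bx$, gives \eqref{B-int-CAB}.
\item[(iii)] For $y\in\bY$, the term $\SAB(t)y$ lies in $\dom(A)$ by Lemma~\ref{l4.1}(iv). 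Applying (i) with $x=By$ gives $A\int_0^t\CAB y=A\SAB(t)y+2\SAB'(t)By+4B\SAB(t)By-2By$. The second expression in \eqref{B-int-CAB-caseY} then follows from \eqref{diff-CAB-1}.
\item[(v)] Similarly, $\int_0^t(t-s)\CAB(s)x\,\rmd s=\int_0^t\SAB(r)x\,\rmd r+2\int_0^t(t-s)\SAB(s)Bx\,\rmd s$, and both terms lie in $\dom(A)$ by (i) and (iv). Applying $A$ gives
\[
\SAB'(t)x+2B\SAB(t)x-x+2\SAB(t)Bx+4\int_0^tB\SAB(s)Bx\,\rmd s-2tBx.
\]
Using $\CAB(t)x=\SAB'(t)x+2\SAB(t)Bx$ and (ii) to rewrite $2B\SAB(t)x+4\int_0^tB\SAB Bx$ as $2B\int_0^t\CAB x$, this becomes exactly \eqref{A-int-t-CAB}.
\end{itemize}
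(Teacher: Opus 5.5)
Your proof is correct. For (i), (ii), (iii) and (v) it follows the paper's argument. In (i) you eliminate $\int_0^tT_{22}(s)x\,\rmd s$ from $G\int_0^t\cT(s)\binom{0}{x}\,\rmd s=\cT(t)\binom{0}{x}-\binom{0}{x}$. You obtain $\int_0^tT_{22}(s)x\,\rmd s=\SAB(t)x+B\int_0^t\SAB(s)x\,\rmd s$ by integrating \eqref{diff-SAB-1}, whereas the paper reads it off the first component of the same identity; the two are interchangeable. Parts (ii), (iii) and (v) come from integrating \eqref{diff-SAB-2}, as in the paper. Your citation of \eqref{diff-CAB-1} in (iii) is the correct one; the paper cites \eqref{diff-CAB-2} there, which needs $x\in\dom(A)$.

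The genuine difference is in (iv). The paper applies Lemma~\ref{App2} with $\varphi(t)=t$ to the trajectory $\cT(\cdot)\binom{0}{x}$. This gives a second pair of component equations, from which it eliminates $\int_0^t(t-s)T_{22}(s)x\,\rmd s$ as in (i). You instead write $\int_0^t(t-s)\SAB(s)x\,\rmd s=\int_0^t\int_0^r\SAB(s)x\,\rmd s\,\rmd r$ and move the closed operator $A$ through the outer integral. This is justified because, by (i), the map $r\mapsto A\int_0^r\SAB(s)x\,\rmd s=\SAB'(r)x+2B\SAB(r)x-x$ is continuous, by Lemma~\ref{l4.2}(i) and \eqref{r3.1.1}. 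Integrating and using $\SAB(0)=0$ then gives \eqref{A-int-t-SAB}.

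Your route is shorter: it derives (iv) from (i) without returning to the group and makes Lemma~\ref{App2} unnecessary here. The paper's route treats (i) and (iv) uniformly as cases of one generator identity (Lemma~\ref{App2} with $\varphi\equiv1$ and $\varphi(t)=t$), which extends mechanically to other weights $\varphi$.
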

\begin{proof}
	(i) Fix $x \in \bX$. Then $\binom{0}{x} \in \bY \times \bX$, which implies that
	\begin{equation}\label{4.7.1}
		\int_0^t \mathcal{T}(s)\binom{0}{x} \, \rmd s \in \dom(G) = \dom(A) \times \bY,\,\;	G\int_0^t \mathcal{T}(s)\binom{0}{x} \, \rmd s = \mathcal{T}(t)\binom{0}{x} - \binom{0}{x}
	\end{equation}\label{4.7.2}
	for any $t \in \mathbb{R}$. Rewriting this statement using \eqref{cal-T} we have
	\begin{equation}
		\int_0^t \SAB(s)x \, \rmd s \in \dom(A),\;\int_0^t T_{22}(s)x \, \rmd s \in \bY\; \text{for any}\; t \in \mathbb{R}
	\end{equation}
	and
	\begin{equation}\label{4.7.3}	
		\begin{cases}
			\displaystyle-B\int_0^t \SAB(s)x \, \rmd s + \int_0^t T_{22}(s)x \, \rmd s = \SAB(t)x \\
			\displaystyle\int_0^t \SAB(s)x \, \rmd s - B\int_0^t T_{22}(s)x \, \rmd s = T_{22}(t)x - x
		\end{cases}\; \text{for any}\; t \in \mathbb{R}.
	\end{equation} 
	Eliminating $\displaystyle\int_0^t T_{22}(s)x \, \rmd s$ from (13) we obtain
	\begin{equation}\label{4.7.4}
		A\int_0^t \SAB(s)x \, \rmd s = T_{22}(t)x + B\SAB(t)x - x
	\end{equation}
	for any $t \in \mathbb{R}$. The identity \eqref{A-int-SAB} follows from  \eqref{diff-SAB-1} and \eqref{4.7.4}, proving assertion (i).
	
	(ii) From \eqref{diff-SAB-2} we have
	\begin{equation}\label{4.7.5}
		\int_0^t \CAB(s)x \, \rmd s
		= \int_0^t \bigl[\SAB'(s)x + 2\SAB(s)Bx\bigr] \, \rmd s
		= \SAB(t)x + 2\int_0^t \SAB(s)Bx \, \rmd s, 
	\end{equation}
	for any $t \in \mathbb{R}$, $x \in \dom(B)$. Assertion (ii) follows from \eqref{4.7.5}, Lemma~\ref{l4.1}(iii) and $B|_\bY \in \mathcal{B}(\bY,\bX)$.
	
	(iii) From \eqref{4.7.5}, (i) and Lemma~\ref{l4.1}(iv) we infer that
	$\int_0^t \CAB(s)y \, \rmd s \in \dom(A)$ for any $t \in \mathbb{R}$, $y \in \bY$. Identity \eqref{B-int-CAB-caseY} follows from \eqref{diff-CAB-2} and  \eqref{4.7.5}.
	
	(iv) Fix $x \in\bX$. By applying Lemma~\ref{App2} for the trajectory $\cT(\cdot)\binom{0}{x} $ and the function $\varphi:\RR\to\RR$ defined by $\varphi(t)=t$, we obtain
	\begin{equation}\label{4.7.6}
		\int_0^t (t-s)\SAB(s)x \, \rmd s \in \dom(A),\,\int_0^t (t-s)T_{22}(s)x \, \rmd s \in \bY
		\; \text{for any}\; t \in \mathbb{R},
	\end{equation}
	and
	\begin{equation}\label{4.7.7}
		\begin{cases}
			\displaystyle -B\int_0^t (t-s)\SAB(s)x \, \rmd s + \int_0^t (t-s) T_{22}(s)x \, \rmd s
			= \int_0^t \SAB(s)x \, \rmd s \\
			\displaystyle \int_0^t (t-s)\SAB(s)x \, \rmd s - B\int_0^t (t-s)T_{22}(s)x \, \rmd s
			= \int_0^t T_{22}(s)x \, \rmd s - tx
		\end{cases}\; \text{for any}\; t \in \mathbb{R}.
	\end{equation}
	Eliminating $\displaystyle\int_0^t (t-s) T_{22}(s)x \, \rmd s$ from \eqref{4.7.7} we have
	\begin{equation}\label{4.7.8}
		A\int_0^t (t-s)\SAB(s)x \, \rmd s
		= B\int_0^t \SAB(s)x \, \rmd s + \int_0^t T_{22}(s)x \, \rmd s - tx\; \text{for any}\; t \in \mathbb{R}.
	\end{equation}
	From \eqref{diff-SAB-1} and \eqref{4.7.8} we conclude
	\begin{align}\label{4.7.9}
		A\int_0^t (t-s)\SAB(s)x \, \rmd s
		&= B\int_0^t \SAB(s)x \, \rmd s
		+ \int_0^t \bigl(\SAB'(s)x + B\SAB(s)x\bigr) \, \rmd s - tx\nonumber\\
		&= \SAB(t)x + 2\int_0^t B\SAB(s)x \, ds - tx
		\quad \text{for any } t \in \mathbb{R},
	\end{align}
	proving (iv).

	(v) Fix $x \in \dom(B)$. From \eqref{diff-SAB-2}, (i), (ii), (iii) it follows that
	\begin{align}\label{4.7.10}
		\int_0^t &(t-s)\CAB(s)x \, \rmd s
		= \int_0^t (t-s)\SAB'(s)x \, \rmd s + 2\int_0^t (t-s)\SAB'(s)Bx \, \rmd s\nonumber\\
		&= (t-s)\SAB(s)x \Big|_{s=0}^{s=t}+\int_0^t \SAB(s)x \, \rmd s
		+ 2\int_0^t (t-s)\SAB(s)Bx \, \rmd s\nonumber\\
		&=\int_0^t \SAB(s)x \, \rmd s
		+ 2\int_0^t (t-s)\SAB(s)Bx \, \rmd s
		\in \dom(A) \; \text{for any}\; t \in \mathbb{R}.
	\end{align}
	From \eqref{diff-SAB-2}, \eqref{A-int-SAB}, \eqref{B-int-CAB}, \eqref{A-int-t-SAB} and  \eqref{4.7.10} we conclude that
	\begin{align*}
		A\int_0^t &(t-s)\CAB(s)x \, \rmd s
		= A\int_0^t \SAB(s)x \, \rmd s + 2A\int_0^t (t-s)\SAB(s)Bx \, \rmd s \nonumber\\
		&= \SAB'(t)x + 2B\SAB(t)x - x
		+ 2\bigl(\SAB(t)Bx + 2\int_0^t B\SAB(s)Bx \, \rmd s - tBx\bigr) \nonumber\\
		&= \CAB(t)x - 2\SAB(t)Bx + 2B\SAB(t)x
		+ 2\SAB(t)Bx - x - 2tBx
		+ 4\int_0^t B\SAB(s)Bx \, \rmd s \nonumber\\
		&= \CAB(t)x + 2\!\left(B\SAB(t)x + 2\int_0^t B\SAB(s)Bx \, \rmd s\right) - x - 2tBx \nonumber\\
		&= \CAB(t)x + 2B\int_0^t \CAB(s)x \, \rmd s - x - 2tBx, \;\text{for any}\; t \in \mathbb{R},
	\end{align*}
	proving assertion (v).
\end{proof}
We are now ready to prove that the two concepts of mild solutions introduced in Definition~\ref{Def-mild} and Definition~\ref{Def-Int-Mild} are equivalent. To prove this fact it is enough to show that \eqref{AB} has a unique Integral mild solution for any $x\in\dom(B)$ and $y\in\bX$ expressed in the terms of the generalized cosine and sine families. 
We recall the following standard integration fact,
\begin{equation}\label{integral-transform}
	\int_0^t \!\int_0^s (s-\tau)^n f(\tau)\,d\tau\,ds
	= \frac{1}{n+1} \int_0^t (t-s)^{n+1} f(s)\,ds
\end{equation}
for any $t \in \mathbb{R}$ and $f \in \mathcal{C}(\mathbb{R}, \bW)$, where $\bW$ is a Banach space.
\begin{theorem}\label{t4.8}
	Assume hypotheses \textnormal{(ABY1)--(ABY3)}. Then, for each $x \in \dom(B)$ and $y \in\bX$ the initial value problem \eqref{AB} has a unique integral mild solution given by
	\begin{equation}\label{Integral-Mild-C-S-AB}
		u_{x,y}(t) = \CAB(t)x + \SAB(t)y, \; t \in \mathbb{R}.
	\end{equation}
\end{theorem}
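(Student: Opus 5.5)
Existence will be verified directly from Lemma~\ref{l4.7}, and uniqueness will be reduced to the uniqueness statement in Theorem~\ref{t3.1} (or to the generator property of $G$) via a double integration.

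\emph{Existence.} Fix $x\in\dom(B)$, $y\in\bX$, and set $u=u_{x,y}$. Continuity of $u$ follows from \eqref{def-CAB}, \eqref{def-SAB}, \eqref{cal-T-continuity} and \eqref{r3.1.1}. For \eqref{domain-cond} I will use the identity \eqref{4.7.5},
\[
\int_0^t \SAB(s)y\,\rmd s\in\dom(A)\subseteq\bY.
\]
Together with Lemma~\ref{l4.7}(ii) and (i), this gives $\int_0^t u(s)\,\rmd s\in\bY\subseteq\dom(B)$. Lemma~\ref{l4.7}(iv),(v) give $\int_0^t(t-s)u(s)\,\rmd s\in\dom(A)$. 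Adding \eqref{A-int-t-CAB} and \eqref{A-int-t-SAB} (the latter applied to $y$) yields
\[
A\int_0^t(t-s)u(s)\,\rmd s=\CAB(t)x+\SAB(t)y+2B\int_0^t\CAB(s)x\,\rmd s+2\int_0^tB\SAB(s)y\,\rmd s-x-2tBx-ty.
\]
Since $\SAB(\cdot)y\in\mathcal{C}(\RR,\bY)$ and $B|_\bY\in\mathcal{B}(\bY,\bX)$, the last integral equals $B\int_0^t\SAB(s)y\,\rmd s$, which is exactly \eqref{Int-Mild}. The continuity requirements \eqref{cont-cond} follow from this identity together with \eqref{B-int-CAB}, since every term on the right-hand side is continuous in $t$.

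\emph{Uniqueness.} Let $\tu$ be the difference of two integral mild solutions. Then $\tu\in\mathcal{C}(\RR,\bX)$ and, writing $U_1(t)=\int_0^t\tu$, $U_2(t)=\int_0^t(t-s)\tu(s)\,\rmd s$,
\[
\tu+2BU_1=AU_2,
\]
with $BU_1$ and $AU_2$ continuous. I will integrate this once more, using \eqref{integral-transform} and the closedness of $A$ and $B$ (via \cite[Proposition~1.1.7]{ABHN}), to obtain
\[
U_1+2BU_2=AU_3,\qquad U_3(t)=\tfrac12\int_0^t(t-s)^2\tu(s)\,\rmd s\in\dom(A).
\]
Then I set $w=U_2$, so $w\in\mathcal{C}^2(\RR,\bX)$ with $w''=\tu$ and $w(0)=w'(0)=0$.

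The main obstacle is to upgrade the regularity of $w$ so that $w$ becomes a classical solution or, better, so that $(w,\,w'+Bw)$ is a strong solution of the first order system \eqref{ABG} in $\dom(A)\times\bY$. I will first try the route of Theorem~\ref{t3.1}: take $W=U_3$ (one more integration) and show $W\in\mathcal{C}(\RR,\dom(A))$ and $W'=U_2\in\mathcal{C}(\RR,\bY)$. The first follows from the twice-integrated identity once $AU_3$ is continuous, which holds because $U_1$ and $BU_2$ are continuous. For the second I will use $2BU_2=AU_3-U_1$ to see that $BU_2$ is $\mathcal{C}^1$, and then place $U_2$ in $\dom(B)$; reaching $\bY$ may need a further integration. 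If $\bY$-regularity is elusive, the fallback is to work with the once more integrated function $Z=\int_0^tU_3$. Then $(Z,\,Z'+BZ)$ is a $\mathcal{C}^1$ solution of $\Phi'=G\Phi$ with $\Phi(0)=0$, after checking it lies in $\dom(G)$ as in \eqref{t3.1.7}--\eqref{t3.1.9}, where $B|_{\dom(A)}\in\mathcal{B}(\dom(A),\bY)$ is the key ingredient. Alternatively, I will take the Laplace transform of \eqref{Int-Mild}, after first showing that $\tu$ is exponentially bounded via a Gronwall-type argument applied to the integrated first order system. This gives $Q(\pm\lambda)\mathcal{L}\tu=0$, so that Lemma~\ref{l3.3} yields $\tu=0$; this also matches Definition~\ref{Def-mild}. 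In all versions, uniqueness of solutions of the abstract Cauchy problem for the group generator $G$ forces the integrated function to vanish, and differentiating gives $\tu\equiv0$.
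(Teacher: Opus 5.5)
Your proposal is correct and essentially reproduces the paper's proof. Existence is read off from Lemma~\ref{l4.7} exactly as in the paper. Your fallback function $Z(t)=\int_0^tU_3(s)\,\rmd s=\int_0^t(t-s)U_2(s)\,\rmd s$ is precisely the paper's $w_0$; the paper shows it is a classical solution with zero data and annihilates it via Theorem~\ref{t3.1}, whose uniqueness proof is the first-order argument with $G$ that you describe.

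The regularity ``obstacle'' you anticipate is illusory. Conditions \eqref{domain-cond}--\eqref{cont-cond} already give $U_2\in\mathcal{C}(\RR,\dom(A))\hookrightarrow\mathcal{C}(\RR,\bY)$, so $w=U_2$ itself is a classical solution in the sense of Definition~\ref{d3.1}, and Theorem~\ref{t3.1} applies at once. The Laplace-transform alternative is unnecessary, and its Gronwall step would not be available for the unbounded generator $G$.
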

\begin{proof} \textit{Existence.} Fix $x \in \dom(B)$, $y \in \bX$ and let $u_{x,y} : \mathbb{R} \to \bX$ be the function defined by \eqref{Integral-Mild-C-S-AB}. From Lemma~\ref{l4.7} (i), (ii), we infer that
	\begin{equation}\label{4.8.1}
		\int_0^t u_{x,y}(s)\,\rmd s
		= \int_0^t \CAB(s)x\,\rmd s
		+\int_0^t \SAB(s)y\,\rmd s
		\in \bY \subseteq \dom(B)\;\text{for any}\; t \in \mathbb{R}.
	\end{equation}
	Moreover, using \eqref{4.7.5}
	\begin{align}\label{4.8.2}
		B\int_0^t u_{x,y}(s)\,\rmd s
		&= B\int_0^t \CAB(s)x\,\rmd s + B\int_0^t \SAB(s)y\,\rmd s\nonumber\\
		&= B\SAB(t)x + 2\int_0^t B\SAB(s)Bx\,\rmd s
		+ \int_0^t B\SAB(s)y\,\rmd s\nonumber\\
		&= B\SAB(t)x + \int_0^t B\SAB(s)(y + 2Bx)\,\rmd s
		\; \text{for any}\; t \in \mathbb{R}.
	\end{align}
	Since $\SAB(\cdot)x,\, \SAB(\cdot)(y+2Bx) \in \mathcal{C}(\mathbb{R},\bY)$, $B|_\bY \in \mathcal{B}(\bY,\bX)$, from \eqref{4.8.2} we conclude that
	\begin{equation}\label{4.8.3}
		t \mapsto B\int_0^t u_{x,y}(s)\,\rmd s \;\in\; \mathcal{C}(\mathbb{R}, \bX).
	\end{equation}
	From Lemma~\ref{l4.7} (iii), (iv) it follows that
	\begin{equation}\label{4.8.4}
		\int_0^t (t-s)u_{x,y}(s)\,\rmd s
		=\int_0^t (t-s)\CAB(s)x\,\rmd s
		+ \int_0^t (t-s)\SAB(s)y\,\rmd s
		\in \dom(A)\; \text{for any}\; t \in \mathbb{R}.
	\end{equation}
	Using \eqref{B-int-CAB}, \eqref{A-int-t-SAB} and \eqref{A-int-t-CAB} we obtain
	\begin{align}\label{4.8.5}
		A\int_0^t &(t-s)u_{x,y}(s)\,\rmd s
		= A\int_0^t (t-s)\CAB(s)x\,\rmd s + A\int_0^t (t-s)\SAB(s)y\,\rmd s\nonumber\\
		&= \CAB(t)x + 2B\int_0^t \CAB(s)x\,\rmd s + \SAB(t)y + 2\int_0^t B\SAB(s)y\,\rmd s - x - 2tBx - ty\nonumber\\
		&= \CAB(t)x + 2\Big(B\SAB(t)x+2\int_0^t B\SAB(s)Bx\,\rmd s \Big)+ \SAB(t)y + 2\int_0^t B\SAB(s)y\,\rmd s\nonumber\\
		&\qquad\qquad\qquad - x - 2tBx - ty\nonumber\\
		&=\CAB(t)x+\SAB(t)y+2B\SAB(t)x+4\int_0^t B\SAB(s)Bx\,\rmd s+2\int_0^t B\SAB(s)y\,\rmd s\nonumber\\
		&\qquad\qquad\qquad - x - 2tBx - ty\nonumber\\
		&=\CAB(t)x+\SAB(t)y+2B\SAB(t)x+2\int_0^t B\SAB(s)(y+2Bx)\,\rmd s-x-t(y+2Bx)
	\end{align}
	for any $t \in \mathbb{R}$. Since $\CAB(\cdot)x \in \mathcal{C}(\mathbb{R},\bX)$, $\SAB(\cdot)z \in \mathcal{C}(\mathbb{R},\bY)$ for any $z \in \bX$, $B|_\bY \in \mathcal{B}(\bY,\bX)$, from \eqref{4.8.5} we obtain that
	\begin{equation}\label{4.8.6}
		t \mapsto A\int_0^t (t-s)u_{x,y}(s)\,\rmd s \;\in\; \mathcal{C}(\mathbb{R}, \bX).
	\end{equation}
	Summarizing, from \eqref{4.8.1} and \eqref{4.8.4} we see that \eqref{domain-cond} holds for $u = u_{x,y}$. Similarly, from \eqref{4.8.3} and \eqref{4.8.6} we have that \eqref{cont-cond} holds for $u = u_{x,y}$.
	Using \eqref{Integral-Mild-C-S-AB}, \eqref{4.8.2}, \eqref{4.8.5} we compute
	\begin{align}\label{4.8.7}
		u_{x,y}(t)&+ 2B\int_0^t u_{x,y}(s)\,\rmd s - A\int_0^t (t-s)u_{x,y}(s)\,\rmd s\nonumber\\
		&= \CAB(t)x + \SAB(t)y
		+ 2\Big(B\SAB(t)x + \int_0^t B\SAB(s)(y+2Bx)\,ds\Big)\nonumber\\
		&\quad - \CAB(t)x - \SAB(t)y
		- 2BS_{A,B}(t)x
		- 2\int_0^t B\SAB(s)(y+2Bx)\,ds + x + t(y+2Bx)\nonumber\\
		&= x + t(y + 2Bx) \; \text{for any}\; t \in \mathbb{R}.
	\end{align}
	Hence, $u_{x,y}$ satisfies \eqref{Int-Mild}, which proves that $u_{x,y}$ is an integral mild solution of \eqref{AB}.
	
	\noindent\textbf{Uniqueness.} Assume that $u_1, u_2 : \mathbb{R} \to\bX$ are two integral mild solutions of \eqref{AB}. Set $u_0 = u_1 - u_2$. Then $u_0$ is an integral mild solution of \eqref{AB} with $u_0(0)=u_0'(0)=0$. Thus $u_0$ satisfies \eqref{domain-cond}, \eqref{cont-cond} and
	\begin{equation}\label{4.8.8}
		u_0(t) + 2B\int_0^t u_0(s)\,\rmd s = A\int_0^t (t-s)u_0(s)\,\rmd s \; \text{for any}\; t \in \mathbb{R}.
	\end{equation}
	Let $v_0, w_0 : \mathbb{R} \to \dom(A)$ be the functions defined by
	\begin{equation}\label{4.8.9}
		v_0(t) = \int_0^t (t-s)u_0(s)\,\rmd s = \int_0^t\!\!\left(\int_0^s u_0(\tau)\,\rmd\tau\right)\rmd s,\;
		w_0(t) = \int_0^t (t-s)v_0(s)\,\rmd s = \int_0^t\!\!\int_0^s v_0(\tau)\,\rmd\tau\,\rmd s.
	\end{equation}
	Since $v_0 \in \mathcal{C}(\mathbb{R},\bX)$ and $Av_0(\cdot) \in \mathcal{C}(\mathbb{R},\bX)$ we have $v_0 \in \mathcal{C}(\mathbb{R},\dom(A))$, which implies that $w_0 \in \mathcal{C}^2(\mathbb{R},\dom(A))$.  Differentiating in \eqref{4.8.9} and using \eqref{integral-transform} we obtain
	\begin{equation}\label{4.8.10}
		w_0'(t) = \int_0^t v_0(s)\,\rmd s = \frac{1}{2}\int_0^t (t-s)^2 u_0(s)\,\rmd s,\;w_0''(t) = v_0(t) = \int_0^t (t-s)u_0(s)\,\rmd s
	\end{equation}
	for any $t \in \mathbb{R}$. 
It follows from \eqref{4.8.8} that
	\begin{align}\label{4.8.11}
		&w_0''(t) + 2Bw_0'(t) - Aw_0(t)=\nonumber\\
		&
		= \int_0^t (t-s)u_0(s)\,\rmd s + B\int_0^t (t-s)^2 u_0(s)\,\rmd s
		- A\int_0^t\!\!\int_0^s\!\!\int_0^\tau v_0(\xi)\,\rmd\xi\,\rmd\tau\,\rmd s\nonumber\\
		&= \int_0^t\!\!\int_0^s u_0(\tau)\,\rmd\tau\,\rmd s
		+ 2B\int_0^t\!\!\int_0^s (s-\tau)u_0(\tau)\,\rmd\tau\,\rmd s
		- A\int_0^t\!\!\int_0^s\!\!\int_0^\tau (\tau-\xi)u_0(\xi)\,\rmd\xi\,\rmd\tau\,\rmd s\nonumber\\
		&= \int_0^t\!\!\int_0^s u_0(\tau)\,\rmd\tau\,\rmd s
		+ 2B\int_0^t\!\!\int_0^s\!\!\int_0^\tau u_0(\xi)\,\rmd\xi\,\rmd\tau\,\rmd s
		- A\int_0^t\!\!\int_0^s\!\!\int_0^\tau (\tau-\xi)u_0(\xi)\,\rmd\xi\,\rmd\tau\,\rmd s\nonumber\\
		&
		= \int_0^t\!\!\int_0^s\!\Big(u_0(\tau) + 2B\int_0^\tau u_0(\xi)\,\rmd\xi
		- A\int_0^\tau (\tau-\xi)u_0(\xi)\,\rmd\xi\Big)\rmd\tau\,\rmd s
		= \int_0^t\!\!\int_0^s 0\,d\tau\,ds = 0 
	\end{align}
	for any $t \in \mathbb{R}$. Since $w_0(0) = w_0'(0) = 0$, we conclude that $w_0$ is a classical solution of \eqref{AB} with $x=0$ and $y=0$.
	From Theorem~\ref{t3.1} we infer that $w_0(t) = 0$ for any $t \in \mathbb{R}$. By \eqref{4.8.10} we have $v_0(t) = 0$ for any $t \in \mathbb{R}$. Finally, by \eqref{4.8.9}, we conclude that $u_0(t) = v_0''(t) = 0$ for any $t \in \mathbb{R}$, proving the theorem.
\end{proof}
\begin{remark}\label{r4.9}
	From Theorem~\ref{t3.10} and Theorem~\ref{t4.8} we infer that \eqref{AB} has a unique Laplace Transform mild solution and a unique integral mild solution given by formulas \eqref{Laplace-Mild-sol} and \eqref{Integral-Mild-C-S-AB}, respectively. Since these formulas are the same, we conclude that the two concepts of mild solutions introduced in Definition~\ref{Def-mild} and Definition~\ref{Def-Int-Mild} coincide.
\end{remark}

\section{The Inhomogeneous Equation and Trigonometric Identities.}\label{s5}
In this section we discuss the inhomogeneous equation \eqref{ABf} associated to \eqref{AB}.
We study classical and mild solutions of \eqref{ABf}. We approach mild solutions of \eqref{ABf} in the same fashion we did  for the homogeneous equation \eqref{AB}. One option is to use the Laplace Transform, the other is to integrate the equation twice. We recall the notation
\begin{equation}\label{measurable-exp-grow}
	\mathcal{M}_{-\nu}(\mathbb{R},\bX)
	= \big\{ f \in \mathcal{M}(\mathbb{R},\bX) : \sup_{t \in \mathbb{R}} e^{-\nu|t|}\|f(t)\| < \infty \big\}. 
\end{equation}
\begin{definition}\label{d5.1} Assume \textnormal{(ABY1)--(ABY3)}. 
	\begin{itemize}
		\item[(i)] If $f \in \mathcal{C}(\mathbb{R}, \bX)$ then $u \in \mathcal{C}^2(\mathbb{R}, \bX)$ is called a \emph{classical solution} of \eqref{ABf} provided $u(t) \in \dom(A)$, $u'(t) \in \dom(B)$, $Au(\cdot) \in \mathcal{C}(\mathbb{R}, \bX)$ and $u$ satisfies \eqref{ABf}.
		\item[(ii)] If $f\in\mathcal{M}_{-\nu}(\mathbb{R},\bX)$, then $u \in \mathcal{C}_{0,-\nu}(\mathbb{R}, \bX)$ is called a \emph{Laplace Transform mild solution} of \eqref{ABf} provided $(\mathcal{L}u)(\lambda)$, $(\mathcal{L}u(-\cdot))(\lambda) \in \dom(A)$ and
		\begin{equation}\label{ABf-LTm} 
			Q(\lambda)\,(\mathcal{L}u)(\lambda) = \lambda x + 2Bx + y + (\mathcal{L}f)(\lambda),\,
			Q(-\lambda)\,(\mathcal{L}u(-\cdot))(\lambda) = \lambda x - 2Bx - y + (\mathcal{L}f(-\cdot))(\lambda)
		\end{equation}
		for any $\lambda \in \mathbb{C}^+_{\max\{\nu, \omega\}}$.
		\item[(iii)] If $f \in L^1_{\mathrm{loc}}(\mathbb{R}, \bX)$ then $u \in \mathcal{C}(\mathbb{R}, \bX)$ is called an \emph{integral mild solution} of \eqref{ABf} provided
		\begin{equation}\label{ABf-Im-1}  
			\int_0^t u(s)\,\rmd s \in \dom(B), \quad \int_0^t (t-s)u(s)\,\rmd s \in \dom(A),
		\end{equation}
		\begin{equation}\label{ABf-Im-2}
			t \mapsto B\int_0^t u(s)\,\rmd s, \quad t \mapsto A\int_0^t (t-s)u(s)\,\rmd s \;\in\; \mathcal{C}(\mathbb{R}, \bX),
		\end{equation}
		\begin{equation}\label{ABf-Im-3}   
			u(t) + 2B\int_0^t u(s)\,\rmd s = A\int_0^t (t-s)u(s)\,\rmd s + x
			+ t(y + Bx) + \int_0^t (t-s)\,f(s)\,\rmd s,\;\mbox{for any}\;t\in\RR.
		\end{equation}
	\end{itemize}	
\end{definition}	
Our first goal of this section is to prove that the solution of \eqref{ABf} is given by
\begin{equation}\label{ABf-Sol} 
	u(t) = C_{A,B}(t)\,x + S_{A,B}(t)\,y + \int_0^t S_{A,B}(t-s)\,f(s)\,\rmd s.
\end{equation}
Assuming some hypotheses on the initial conditions and $f$, we will see below that the function defined in \eqref{ABf-Sol} is a classical or mild solution of \eqref{ABf}. First, we will discuss the properties of $S_{A,B} * f$, the convolution term in \eqref{ABf-Sol}.

\begin{lemma}\label{l5.2} Assume \textnormal{(ABY1)--(ABY3)}. Then, the following holds.
	\begin{itemize}
		\item[(i)] If $f \in L^1_{\mathrm{loc}}(\mathbb{R}, \bX)$ then $S_{A,B} * f \in \mathcal{C}(\mathbb{R}, \bY)$.
		\item[(ii)] If $f \in \mathcal{M}_{-\nu}(\mathbb{R}, \bX)$ then $S_{A,B} * f \in \mathcal{C}_{0,-\gamma}(\mathbb{R}, \bX)$ for any $\gamma > \max\{\omega, \nu\}$.
		\item[(iii)] If $f \in \mathcal{C}^1(\mathbb{R}, \bX)$ then $S_{A,B} * f \in \mathcal{C}^2(\mathbb{R}, \bX)\cap \mathcal{C}^1(\mathbb{R}, \bY)$ and
		\begin{equation}\label{Sf-prime} \quad (S_{A,B} * f)' = S_{A,B}' * f,\;  (S_{A,B} * f)'' = S_{A,B}'(\cdot)\,f(0) + S_{A,B}' * f', 
		\end{equation}
		\item[(iv)] If $f \in \mathcal{C}(\mathbb{R}, \bY)$ then $S_{A,B} * f \in \mathcal{C}^2(\mathbb{R}, \bX)\cap \mathcal{C}^1(\mathbb{R}, \bY)$ and
		\begin{equation}\label{Sf-primeY}
			(S_{A,B} * f)' = (S_{A,B}(\cdot)|_\bY)' * f,\;  (S_{A,B} * f)'' = (S_{A,B}(\cdot)|_\bY)'' * f + f.
		\end{equation}
	\end{itemize}
\end{lemma}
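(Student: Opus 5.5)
The plan is to read the convolution $(\SAB*f)(t)=\int_0^t\SAB(t-s)f(s)\,\rmd s$ through the strong continuity and exponential bounds of Section~\ref{s4}, and to treat each assertion with the standard vector-valued convolution arguments, mirroring \cite[Proposition 3.1.16]{ABHN}.

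For (i), the key input is that $\SAB(\cdot)x=T_{12}(\cdot)x\in\mathcal{C}(\RR,\bY)$ for every $x\in\bX$, with the bound \eqref{SAB-XY}. The integrand $s\mapsto \SAB(t-s)f(s)$ is therefore strongly measurable in $\bY$ and is dominated by $\overline M e^{\omega|t-s|}\|f(s)\|$, so it is Bochner integrable in $\bY$. Continuity in $t$ then follows by writing the difference of two values as $\int_0^t(\SAB(t+h-s)-\SAB(t-s))f(s)\,\rmd s+\int_t^{t+h}\SAB(t+h-s)f(s)\,\rmd s$. The first integral tends to $0$ by dominated convergence; the second tends to $0$ by the exponential bound and the local integrability of $f$.

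For (ii), I estimate $\|(\SAB*f)(t)\|\le \overline M c\int_0^{|t|}e^{\omega(|t|-s)}e^{\nu s}\,\rmd s\le C_\gamma e^{\gamma|t|}$ for any $\gamma>\max\{\omega,\nu\}$, where $c$ is the embedding constant of $\bY\hookrightarrow\bX$. Continuity is already given by (i).

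For (iii), I substitute to get $(\SAB*f)(t)=\int_0^t\SAB(s)f(t-s)\,\rmd s$. Since $f\in\mathcal{C}^1$ and $\SAB(\cdot)$ is strongly continuous into $\bY$ and locally bounded, differentiation gives $(\SAB*f)'(t)=\SAB(t)f(0)+\int_0^t\SAB(s)f'(t-s)\,\rmd s$, and this lies in $\mathcal{C}(\RR,\bY)$ by (i). Hence $\SAB*f\in\mathcal{C}^1(\RR,\bY)$. On the other hand, Lemma~\ref{l4.2}(i) gives $\SAB(\cdot)x\in\mathcal{C}^1(\RR,\bX)$ with $\SAB(0)=0$ and the bound \eqref{SAB-prime-XX}. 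Differentiating the original form under the integral sign therefore gives $(\SAB*f)'=\SAB'*f$, the boundary term vanishing because $\SAB(0)=0$. Writing $\SAB'*f=\int_0^t\SAB'(s)f(t-s)\,\rmd s$ and differentiating once more yields $\SAB'(t)f(0)+\SAB'*f'$, which is continuous in $\bX$. These two computations give \eqref{Sf-prime}.

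For (iv), I use that for $y\in\bY$ the trajectory $\SAB(\cdot)y$ is $\mathcal{C}^1$ in $\bY$ and $\mathcal{C}^2$ in $\bX$, by Lemma~\ref{l4.2}(iii). The uniform local bounds \eqref{SAB-prime-YY} (for $\SAB'$ on $\bY$) and \eqref{ASAB-YX}, \eqref{BCAB-YX}, \eqref{BSABB-YX} (which control $\SAB''$ on $\bY$ via \eqref{diff-SAB-3}) allow differentiation under the integral. This gives $(\SAB*f)'=(\SAB|_\bY)'*f$, with boundary term $\SAB(0)f(t)=0$, and then $(\SAB*f)''=(\SAB|_\bY)''*f+\SAB'(0)f(t)=(\SAB|_\bY)''*f+f$.

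\textbf{Main obstacle.} The delicate point in (iv) is justifying differentiation under the integral when $f$ is only continuous, not differentiable. I would handle it through the difference quotient in $t$ of $\int_0^t K(t-s)f(s)\,\rmd s$ with $K=\SAB|_\bY$ or $K=(\SAB|_\bY)'$. This requires the strong continuity of $(\SAB|_\bY)''(\cdot)y$ in $\bX$ together with a locally uniform operator bound, after which the uniform boundedness principle and equicontinuity on compact sets of the values $f(s)$ finish the argument.
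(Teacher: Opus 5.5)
Your proposal is correct and follows essentially the same route as the paper. Both use the same continuity argument for (i), which the paper simply cites as \cite[Proposition 1.3.4]{ABHN}, and the same exponential estimate for (ii). For (iii)--(iv) both work with $\SAB$ on $\bX$ (plus (i) for the $\bY$-regularity), $\SAB|_\bY$ on $\bY$, and $\SAB'|_\bY\colon\bY\to\bX$.

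The only difference is how the convolutions are differentiated. The paper does this through Lemma~\ref{App3}, which is proved by integrating $K'*g$ and applying Fubini rather than by difference quotients. That argument needs only strong continuity of $K'$, so it sidesteps the justification you flag as the main obstacle.
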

\begin{proof}
	Assertion (i) follows from \cite[Proposition 1.3.4]{ABHN}, since $S_{A,B}: \mathbb{R} \to \mathcal{B}(\bX, \bY)$ is strongly continuous by Lemma~\ref{l4.2}(i), first on $\mathbb{R}_+$ and then on $\mathbb{R}_-$. From \eqref{SAB-XY} and since $\bY \hookrightarrow \bX$, there exists $c_1 > 0$ such that
	$\|y\| \leq c_1\|y\|_\bY$ for any $y\in\bY$, which implies that 
	\begin{align}\label{5.2.1}
		\left\|\int_0^t S_{A,B}(t-s)\,f(s)\,\rmd s\right\| &\le \left|\int_0^t \overline{M}\,e^{\omega|t-s|}\,\|f(s)\|\,\rmd s\right|\le \overline{M}c_1K_f\left|\int_0^t e^{\omega|t-s|}\,e^{\nu|s|}\,\rmd s\right|\nonumber\\
		&\le \overline{M}c_1K_f\,|t|\,e^{\max\{\omega,\nu\}|t|}\;\mbox{for any}\;t\in\RR.
	\end{align}
	Here $K_f=\sup_{t \in \mathbb{R}} e^{-\nu|t|}\|f(t)\| < \infty$. Assertion (ii) follows shortly.
	
	(iii) Let $K_1: \mathbb{R} \to \mathcal{B}(\bX)$ be the operator-valued function defined by $K_1 = S_{A,B}$. From Lemma~\ref{l4.2}(i) we have that $K_1(\cdot)\,x \in \mathcal{C}^1(\mathbb{R}, \bX)$ for any $x \in \bX$. From Lemma~\ref{App3} for $\bX_1 = \bX_2 = \bX$, $g = f$ we obtain that $S_{A,B} * f \in \mathcal{C}^1(\mathbb{R}, \bX)$ and
	\begin{equation}\label{5.2.2} 
		(S_{A,B} * f)' = S_{A,B}' * f = S_{A,B}(\cdot)\,f(0) + S_{A,B} * f'.
	\end{equation}
	From (i) we infer that $S_{A,B} * f' \in \mathcal{C}(\mathbb{R}, \bY)$. Since $S_{A,B}(\cdot)\,f(0) \in \mathcal{C}(\mathbb{R}, \bY)$ by Lemma 4.2(i) it follows that $(S_{A,B} * f)' \in \mathcal{C}(\mathbb{R}, \bY)$, which implies that $S_{A,B} * f \in \mathcal{C}^1(\mathbb{R}, \bY)$. Applying again Lemma~\ref{App3} for $K_1 = S_{A,B}$, $\bX_1 = \bX_2 = \bX$, $g = f' \in \mathcal{C}(\mathbb{R}, \bX)$ we obtain $S_{A,B} * f' \in \mathcal{C}^1(\mathbb{R}, \bX)$ and $(S_{A,B} * f')' = S_{A,B}' * f'$. Since $S_{A,B}(\cdot)\,f(0) \in \mathcal{C}^1(\mathbb{R}, \bX)$ by Lemma~\ref{l4.2}(i) we conclude that $S_{A,B} * f \in \mathcal{C}^2(\mathbb{R}, \bX)$ and
	\begin{equation}\label{5.2.3} (S_{A,B} * f)'' = S_{A,B}'(\cdot)\,f(0) + S_{A,B}' * f'.
	\end{equation}
	Identities \eqref{Sf-prime} follow from \eqref{5.2.2} and \eqref{5.2.3}.
	
	(iv) Let $K_2: \mathbb{R} \to \mathcal{B}(\bY)$, $K_2(t) = S_{A,B}(t)|_\bY$. From Lemma~\ref{l4.2}(iii) we have $K_2(\cdot)\,y \in \mathcal{C}^1(\mathbb{R}, \bY)$ for any $y \in \bY$. Since $f \in \mathcal{C}(\mathbb{R}, \bY)$, from Lemma~\ref{App3} for $\bX_1 = \bX_2 = \bY$ and $g = f$ we obtain that $S_{A,B} * f \in \mathcal{C}^1(\mathbb{R}, \bY) \subseteq \mathcal{C}^1(\mathbb{R}, \bX)$ and $(S_{A,B} * f)' =(S_{A,B}(\cdot)|_\bY*f)'= S_{A,B}' * f$.
	
	Let $K_3: \mathbb{R} \to \mathcal{B}(\bY, \bX)$, $K_3(t) = S_{A,B}'(t)|_\bY$. Using again Lemma~\ref{l4.2}(iii) we know that $K_3(\cdot)\,y \in \mathcal{C}^1(\mathbb{R}, \bX)$ for any $y\in\bY$ and $K_3'(t)\,y = (S_{A,B}(t)\,y)''$ for any $t \in \mathbb{R}$. From Lemma~\ref{App3} with $\bX_1 = \bY$, $\bX_2 = \bX$, $g = f$, we infer that $S_{A,B}'(\cdot)|_\bY * f \in \mathcal{C}^1(\mathbb{R}, \bX)$ and 
	\begin{equation}\label{5.2.4} 
		(S_{A,B}'(\cdot)|_\bY * f)' = S_{A,B}''(\cdot)|_\bY * f + f.
	\end{equation}
	Hence, $S_{A,B} * f \in \mathcal{C}^2(\mathbb{R}, \bX)$.
	The identity \eqref{Sf-primeY} follows from \eqref{diff-SAB-3} and \eqref{5.2.4}.
\end{proof}

\begin{lemma}\label{l5.3} Assume \textnormal{(ABY1)--(ABY3)}. Then,
	\begin{itemize}
		\item[(i)] If $f \in \mathcal{C}^1(\mathbb{R}, \bX)$ then $S_{A,B} * f \in \mathcal{C}(\mathbb{R}, \dom(A))$;
		\item[(ii)] If $f \in \mathcal{C}(\mathbb{R}, \bY)$ then $S_{A,B} * f \in \mathcal{C}(\mathbb{R}, \dom(A))$.
	\end{itemize}	
\end{lemma}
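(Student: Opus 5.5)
The strategy is to show that $S_{A,B}*f$ takes values in $\dom(A)$ with $A(S_{A,B}*f)(\cdot)$ continuous in $\bX$. Since $A$ is closed, this is the same as $S_{A,B}*f\in\mathcal{C}(\RR,\dom(A))$. Both parts reduce to results already proved for $\SAB$.

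\emph{Part (ii).} Take $f\in\mathcal{C}(\RR,\bY)$. By Lemma~\ref{l4.1}(iv), $\SAB(t-s)f(s)\in\dom(A)$ for every $s$. By \eqref{ASAB-YX}, $\|A\SAB(t-s)f(s)\|\le\overline{M}e^{\omega|t-s|}\|f(s)\|_\bY$. I also need $s\mapsto A\SAB(t-s)f(s)$ to be continuous into $\bX$. For this I use \eqref{T21-Y}, which writes $A\SAB(\tau)y$ as
\[
T_{21}(\tau)y+B\CAB(\tau)y-3B\SAB(\tau)By+\SAB'(\tau)By .
\]
Each term is strongly continuous from $\bY$ to $\bX$ and locally uniformly bounded in operator norm, by \eqref{cal-T-continuity}, Lemma~\ref{l4.3}(i) together with \eqref{r3.1.1}, and Lemma~\ref{l4.2}(i). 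A standard argument then makes $(\tau,s)\mapsto A\SAB(\tau)f(s)$ jointly continuous. Closedness of $A$ (\cite[Proposition~1.1.7]{ABHN}) gives $(S_{A,B}*f)(t)\in\dom(A)$ and
\[
A(S_{A,B}*f)(t)=\int_0^t A\SAB(t-s)f(s)\,\rmd s .
\]
By the estimate and joint continuity, the right-hand side is continuous in $t$.

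\emph{Part (i).} Take $f\in\mathcal{C}^1(\RR,\bX)$. Write $f(s)=f(0)+\int_0^s f'(r)\,\rmd r$ and substitute $\sigma=t-s$, so that
\[
(S_{A,B}*f)(t)=\int_0^t\SAB(\sigma)f(0)\,\rmd\sigma+\int_0^t\!\Big(\int_0^{t-r}\SAB(\sigma)\,\rmd\sigma\Big) f'(r)\,\rmd r .
\]
Lemma~\ref{l4.7}(i) shows that $\int_0^\tau\SAB(\sigma)x\,\rmd\sigma\in\dom(A)$, with $A$ of it equal to $\SAB'(\tau)x+2B\SAB(\tau)x-x$. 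This expression is strongly continuous in $\tau$ from $\bX$ to $\bX$ and exponentially bounded, by Lemma~\ref{l4.2}(i), \eqref{SAB-prime-XX} and \eqref{BSAB-XX}. Applying the closedness argument again gives
\[
A(S_{A,B}*f)(t)=\bigl(\SAB'(t)+2B\SAB(t)-I\bigr)f(0)+\int_0^t\bigl(\SAB'(t-r)+2B\SAB(t-r)-I\bigr)f'(r)\,\rmd r ,
\]
which is continuous in $t$. So $S_{A,B}*f\in\mathcal{C}(\RR,\dom(A))$.

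\emph{Main obstacle.} The delicate step is justifying that $A$ can be moved under the integral. This needs measurability and local integrability of the integrands $s\mapsto A\SAB(t-s)f(s)$ in (ii) and $r\mapsto A\int_0^{t-r}\SAB(\sigma)\,\rmd\sigma\, f'(r)$ in (i), and then continuity in $t$ of the resulting integrals. Both follow from strong continuity plus local uniform boundedness (uniform boundedness principle) of the operator families above, which give joint continuity of $(\tau,s)\mapsto K(\tau)g(s)$ for continuous $g$. Negative $t$ is handled in the same way, with the integrals read as oriented integrals.
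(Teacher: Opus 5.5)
Your proposal is correct and follows the paper's route. In (i) the paper uses the same decomposition $S_{A,B}*f=K_4(\cdot)f(0)+K_4*f'$, with $K_4(t)=\int_0^t S_{A,B}(s)\,\rmd s$ strongly continuous into $\mathcal{B}(\bX,\dom(A))$ by Lemma~\ref{l4.7}(i); in (ii) it uses that $S_{A,B}(\cdot)|_\bY$ is strongly continuous into $\mathcal{B}(\bY,\dom(A))$, and in both parts it cites \cite[Proposition 1.3.4]{ABHN} in the graph-norm space $\dom(A)$ where you give a hand-made closedness/joint-continuity argument. In (ii) your detour through \eqref{T21-Y} is unnecessary, since Lemma~\ref{l4.2}(iii) already gives $S_{A,B}(\cdot)y\in\mathcal{C}(\mathbb{R},\dom(A))$; also, by \eqref{T21-Y} the last term there is $-S_{A,B}'(\tau)By$, a sign slip that does not affect continuity.
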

\begin{proof}
	(i)  Let $K_4(t):\bX \to \dom(A)$ be the operator valued function defined by $$K_4(t)\,x = \displaystyle\int_0^t S_{A,B}(s)\,x\,\rmd s.$$ 
	By Lemma 4.8(i) $K_4$ is well defined and
	\begin{equation}\label{5.3.1} 
		AK_4(t)\,x = S_{A,B}'(t)\,x + 2B\SAB(t)\,x - x\;\mbox{for any}\;t\in\RR,\,x\in\bX.
	\end{equation}
	From \eqref{BSAB-XX}, \eqref{SAB-prime-XX} and \eqref{5.3.1} we obtain $K_4(t) \in \mathcal{B}(\bX, \dom(A))$ for any $x \in \bX$. Moreover, since $B|_\bY \in \mathcal{B}(\bY, \bX)$, by \eqref{r3.1.1}, $S_{A,B}(\cdot)\,x \in \mathcal{C}^1(\mathbb{R}, \bX) \cap \mathcal{C}(\mathbb{R}, \bY)$, by Lemma~\ref{l4.2}(i), from (5.3.1) we conclude that $K_4: \mathbb{R} \to \mathcal{B}(\bX, \dom(A))$ is strongly continuous. From \cite[Proposition 1.3.4]{ABHN} we infer that
	\begin{equation}\label{5.3.2} 
		K_4 * f' \in \mathcal{C}(\mathbb{R}, \dom(A)).
	\end{equation}
	From \eqref{Sf-prime} we immediately see that
	\begin{align}\label{5.3.3} 
		(S_{A,B} * f)(t)&= \int_0^t \bigl[S_{A,B}(s)\,f(0) + (S_{A,B} * f')(s)\bigr]\,\rmd s=K_4(t)\,f(0) + \bigl(1 * (S_{A,B} * f')\bigr)(t)\nonumber\\
		&= K_4(t)\,f(0) + \bigl((1 * S_{A,B}) * f'\bigr)(t)
		= K_4(t)\,f(0) + (K_4 * f')(t) \;\text{for any}\; t \in \mathbb{R}.
	\end{align}
	Assertion (i) follows from \eqref{5.3.2}, \eqref{5.3.3} and since $K_4(\cdot)\,f(0) \in \mathcal{C}(\mathbb{R}, \dom(A))$.
	
	(ii) It follows from Lemma~\ref{l4.2}(iii) and \eqref{ASAB-YX} that $S_{A,B}(\cdot)|_\bY$ is a strongly continuous operator-valued function taking values in $\mathcal{B}(\bY, \dom(A))$. Since $f \in \mathcal{C}(\mathbb{R}, \bY)$, from \cite[Proposition 1.3.4]{ABHN} we obtain that $S_{A,B} * f \in \mathcal{C}(\mathbb{R}, \dom(A))$, proving (ii).
\end{proof}
\begin{lemma}\label{l5.4}
	Assume \textnormal{(ABY1)--(ABY3)} and let $f \in L^1_{\mathrm{loc}}(\mathbb{R}, \bX)$.  Then, $u_f=\SAB*f$ satisfies \eqref{ABf-Im-1} and \eqref{ABf-Im-2}.
\end{lemma}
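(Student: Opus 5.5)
The plan is to reduce both integral conditions to the already established Lemma~\ref{l4.7}(i) and (iv), by interchanging the order of integration (Fubini) and using the fact that closed operators commute with Bochner integrals when the integrand is integrable in the graph norm (\cite[Proposition~1.1.7]{ABHN}).

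\textbf{The condition involving $B$.} Write
\[
\int_0^t u_f(s)\,\rmd s=\int_0^t\!\!\int_0^s \SAB(s-\tau)f(\tau)\,\rmd\tau\,\rmd s=\int_0^t \Big(\int_0^{t-\tau}\SAB(r)\,\rmd r\Big)f(\tau)\,\rmd\tau=(K_4*f)(t),
\]
where $K_4(t)x=\int_0^t\SAB(s)x\,\rmd s$ is the operator family from the proof of Lemma~\ref{l5.3}. Alternatively, and more simply, Lemma~\ref{l5.2}(i) gives $u_f\in\mathcal{C}(\RR,\bY)$. Hence $\int_0^t u_f(s)\,\rmd s$ is a $\bY$-valued Bochner integral, so it lies in $\bY\subseteq\dom(B)$. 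Since $B|_\bY\in\mathcal{B}(\bY,\bX)$ by \eqref{r3.1.1}, we get
\[
B\int_0^t u_f=\int_0^t B|_\bY u_f(s)\,\rmd s,
\]
which is continuous in $t$. This settles the $B$-parts of \eqref{ABf-Im-1} and \eqref{ABf-Im-2}.

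\textbf{The condition involving $A$.} By Fubini,
\[
\int_0^t (t-s)u_f(s)\,\rmd s=\int_0^t K_5(t-\tau)f(\tau)\,\rmd\tau,\qquad K_5(t)x:=\int_0^t (t-r)\SAB(r)x\,\rmd r .
\]
Lemma~\ref{l4.7}(iv) shows that $K_5(t)x\in\dom(A)$, with
\[
AK_5(t)x=\SAB(t)x+2\int_0^t B\SAB(s)x\,\rmd s-tx .
\]
By \eqref{SAB-XY} and \eqref{BSAB-XX}, together with the strong continuity of $\SAB(\cdot)$ in $\bY$ (Lemma~\ref{l4.2}(i)) and $B|_\bY\in\mathcal B(\bY,\bX)$, the right-hand side is strongly continuous in $t$ and locally uniformly bounded in operator norm. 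Thus $K_5:\RR\to\mathcal{B}(\bX,\dom(A))$ is strongly continuous; the same argument applies to $K_4$ via Lemma~\ref{l4.7}(i). \cite[Proposition~1.3.4]{ABHN} then gives $K_5*f\in\mathcal{C}(\RR,\dom(A))$ for $f\in L^1_{\mathrm{loc}}(\RR,\bX)$. This yields both the inclusion in $\dom(A)$ and the continuity of $t\mapsto A\int_0^t(t-s)u_f(s)\,\rmd s$.

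\textbf{Main obstacle.} The delicate step is the justification of the Fubini interchange and of the convolution identity $1*(1*(\SAB*f))=(1*1*\SAB)*f$ for merely locally integrable $f$ on all of $\RR$, including negative $t$. I would handle this exactly as in \eqref{5.3.3}: use associativity of convolution for strongly continuous kernels against $L^1_{\mathrm{loc}}$ functions, treating $t\ge0$ and $t\le0$ separately, and check the measurability needed for \cite[Proposition~1.3.4]{ABHN}.
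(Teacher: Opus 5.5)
Your proof is correct. It differs from the paper's only in how the convolutions are regrouped.

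\textbf{The paper's route.} The paper sets $g_1=1*f\in\mathcal{C}(\RR,\bX)$ and $g_2=1_\RR*f\in\mathcal{C}^1(\RR,\bX)$. It writes $\int_0^t u_f(s)\,\rmd s=(S_{A,B}*g_1)(t)$ and $\int_0^t(t-s)u_f(s)\,\rmd s=(S_{A,B}*g_2)(t)$, and then cites Lemma~\ref{l5.2}(i) and Lemma~\ref{l5.3}(i). In other words, it moves the integrations onto $f$ so that the lemmas already proved for smoother forcing terms apply.

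\textbf{The $B$-part.} You apply Lemma~\ref{l5.2}(i) to $f$ itself and integrate the continuous $\bY$-valued function $u_f$, using $B|_\bY\in\mathcal{B}(\bY,\bX)$. This needs no convolution regrouping at all and is slightly cleaner than the paper's version.

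\textbf{The $A$-part.} You move both integrations onto the kernel, $K_5=1_\RR*S_{A,B}$. You then use Lemma~\ref{l4.7}(iv) and check directly that $K_5:\RR\to\mathcal{B}(\bX,\dom(A))$ is strongly continuous. This is the same kind of verification the paper carries out for $K_4=1*S_{A,B}$ inside the proof of Lemma~\ref{l5.3}(i), via Lemma~\ref{l4.7}(i).

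\textbf{Comparison.} Since $K_5*f=K_4*g_1=S_{A,B}*g_2$, the two arguments produce the same object. The paper's version is shorter because it recycles Lemma~\ref{l5.3}. Yours is more self-contained and makes visible that nothing beyond local integrability of $f$ is used. The Fubini/associativity step you flag as the main obstacle is exactly what the paper invokes as ``properties of convolution''. The identity $\int_0^t(t-s)u_f(s)\,\rmd s=(K_5*f)(t)$ does hold for $t<0$ as well, since the orientation signs cancel after the change of variables.
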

\begin{proof}
	We recall the notation $1$ for the function identically equal to $1$ and $1_\RR$ the identity function on $\RR$. We define $g_1=1*f$ and $g_2=1_\RR*f$.
	Using the properties of convolution we see that
	\begin{equation}\label{5.4.1} 
		\int_0^t u_f(s)\,\rmd s = (1 * u_f)(t) = \bigl(1 * (S_{A,B} * f)\bigr)(t)
		= \bigl(S_{A,B} * (1 * f)\bigr)(t) = (S_{A,B} * g_1)(t)
	\end{equation}
	for any $t \in \mathbb{R}$.
	Similarly,
	\begin{equation}\label{5.4.2}
		\int_0^t (t-s)\,u_f(s)\,ds = (1_{\mathbb{R}} * u_f)(t) = \bigl(1_{\mathbb{R}} * (S_{A,B} * f)\bigr)(t)
		= \bigl(S_{A,B} * (1_{\mathbb{R}} * f)\bigr)(t) = (S_{A,B} * g_2)(t) 
	\end{equation}
	for any $t \in \mathbb{R}$. Since $f \in L^1_{\mathrm{loc}}(\mathbb{R}, \bX)$ it follows that
	\begin{equation}\label{5.4.3}
		g_1 \in \mathcal{C}(\mathbb{R}, \bX), \; g_2 \in \mathcal{C}^1(\mathbb{R}, \bX), \;g_2' = g_1.
	\end{equation}
	From Lemma~\ref{l5.2}(i) and \eqref{5.4.3} we obtain
	$S_{A,B} * g_1 \in \mathcal{C}(\mathbb{R}, \bY)$. Since $B|_\bY \in \mathcal{B}(\bY, \bX)$ by \eqref{r3.1.1}, it follows that $B(S_{A,B} * g_1)(\cdot) \in \mathcal{C}(\mathbb{R}, \bX)$. It follows from Lemma~\ref{l5.3}(i) and \eqref{5.4.3} that $S_{A,B} * g_2 \in \mathcal{C}(\mathbb{R}, \dom(A))$, hence $A(S_{A,B} * g_2)(\cdot) \in \mathcal{C}(\mathbb{R}, \bX)$. Using \eqref{5.4.1} and \eqref{5.4.2} we infer that $u_f =S_{A,B}*f$ satisfies \eqref{ABf-Im-1} and \eqref{ABf-Im-2}.
\end{proof}
To prove the next theorem we need the following elementary result
\begin{equation}\label{5.4.4} 
	(S_{A,B} * g)(-\cdot) = -S_{A,B}(-\cdot) * g(-\cdot), \;\mbox{for any}\; g \in \mathcal{M}_{-\nu}(\mathbb{R}, \bX),
\end{equation}
which combined with \eqref{r3.8.2} yields
\begin{equation}\label{5.4.5} 
	\bigl(\mathcal{L}(S_{A,B} * g)(-\cdot)\bigr)(\lambda) = Q^{-1}(-\lambda)\,(\mathcal{L}\,g(-\cdot))(\lambda)\;\mbox{for any}\; g \in \mathcal{M}_{-\nu}(\mathbb{R}, \bX),\,\lambda \in \mathbb{C}^+_{\max\{\omega,\nu\}}
\end{equation}
\begin{theorem}\label{t5.5}
	Assume \textnormal{(ABY1)--(ABY3)}. Then,
	\begin{itemize}
		\item[(i)] If $x \in \dom(A)$, $y \in \bY$, $f \in \mathcal{C}^1(\mathbb{R}, \bX)$ then the function $u$ defined in \eqref{ABf-Sol} belongs to $\mathcal{C}^2(\mathbb{R}, \bX) \cap \mathcal{C}^1(\mathbb{R}, \bY) \cap \mathcal{C}(\mathbb{R}, \dom(A))$ and is the unique classical solution of \eqref{ABf}.
		\item[(ii)] If $x \in \dom(A)$, $y \in \bY$, $f \in \mathcal{C}(\mathbb{R}, \bY)$ then the function $u$ defined in \eqref{ABf-Sol} belongs to $\mathcal{C}^2(\mathbb{R}, \bX) \cap \mathcal{C}^1(\mathbb{R}, \bY) \cap \mathcal{C}(\mathbb{R}, \dom(A))$ and is the unique classical solution of \eqref{ABf}.
		\item[(iii)] If $x \in \dom(B)$, $y \in \bX$, $f \in \mathcal{M}_{-\nu}(\mathbb{R}, \bX)$, then the function defined in \eqref{ABf-Sol} is the unique Laplace Transform mild solution of \eqref{ABf} .
		\item[(iv)] If $x \in \dom(B)$, $y \in \bX$, $f \in L^1_{\mathrm{loc}}(\mathbb{R}, \bX)$ then the function defined in \eqref{ABf-Sol} is the unique Integral mild solution of \eqref{ABf} .
	\end{itemize}
\end{theorem}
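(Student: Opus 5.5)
We split $u = u_h + u_f$, with $u_h=\CAB(\cdot)x+\SAB(\cdot)y$ the homogeneous part and $u_f=\SAB*f$ the convolution term. Each of (i)--(iv) is then reduced to a statement about $u_h$ (already proved in Theorems~\ref{t3.1}, \ref{t3.10}, \ref{t4.8}) plus a statement about $u_f$ with zero initial data. Uniqueness comes from linearity: the difference of two solutions of \eqref{ABf} is a solution of the homogeneous problem \eqref{AB} with $x=y=0$ in the same sense. Hence uniqueness follows from Theorem~\ref{t3.1} in the classical case, Theorem~\ref{t3.10} in the Laplace case, and Theorem~\ref{t4.8} in the integral case.

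\emph{Classical solutions, (i) and (ii).} For $x\in\dom(A)$, $y\in\bY$, Lemma~\ref{l4.2}(iii) and Lemma~\ref{l4.3}(ii) give $u_h\in\mathcal{C}^2(\RR,\bX)\cap\mathcal{C}^1(\RR,\bY)\cap\mathcal{C}(\RR,\dom(A))$, and $u_h$ solves \eqref{AB}. For $u_f$, Lemma~\ref{l5.2}(iii)/(iv) give $u_f\in\mathcal{C}^2(\RR,\bX)\cap\mathcal{C}^1(\RR,\bY)$, and Lemma~\ref{l5.3} gives $u_f\in\mathcal{C}(\RR,\dom(A))$. Then $u_f'\in\mathcal{C}(\RR,\bY)$, and since $B|_\bY\in\mathcal{B}(\bY,\bX)$ we get $Bu_f'\in\mathcal{C}(\RR,\bX)$. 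It remains to verify $u_f''+2Bu_f'-Au_f=f$ and $u_f(0)=u_f'(0)=0$; the initial conditions are immediate from \eqref{Sf-prime} and \eqref{Sf-primeY}.

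\emph{The main obstacle is this equation.} I would avoid manipulating $A$ inside convolutions pointwise and instead argue by Laplace transforms, applied separately on $\RR_+$ and on $\RR_-$ via \eqref{5.4.4}--\eqref{5.4.5}. The forward transform is $\mathcal{L}u_f=Q^{-1}(\lambda)\mathcal{L}f$. Because $u_f\in\mathcal{C}^2$ with zero initial data, we have $\mathcal{L}u_f''=\lambda^2\mathcal{L}u_f$ and $\mathcal{L}(Bu_f')=\lambda B\mathcal{L}u_f$; the latter uses closedness of $B$ together with continuity of $Bu_f'$ and of $Au_f$. This gives
$$\mathcal{L}(u_f''+2Bu_f'-Au_f)=Q(\lambda)Q^{-1}(\lambda)\mathcal{L}f=\mathcal{L}f.$$
The backward transform is handled the same way. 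Uniqueness of the Laplace transform then gives the identity, provided the growth is exponential. For $f$ merely $\mathcal{C}^1$ or $\mathcal{C}(\RR,\bY)$ without a growth bound, I would localize: the value of $u_f$ on $[-T,T]$ depends only on $f|_{[-T,T]}$, so we may replace $f$ by a function that agrees with it on $[-T,T]$ and has exponential growth, with the same regularity. Alternatively, one can verify the identity directly using \eqref{diff-SAB-4} for $f\in\mathcal{C}(\RR,\bY)$ and Lemma~\ref{l4.7}(i) combined with \eqref{Sf-prime} for $f\in\mathcal{C}^1$.

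\emph{Mild solutions, (iii) and (iv).} For (iii), Lemma~\ref{l5.2}(ii) places $u_f$ in $\mathcal{C}_{0,-\gamma}$. Then $\mathcal{L}u_f=Q^{-1}(\lambda)\mathcal{L}f$ by the convolution theorem with \eqref{r3.8.2}, and \eqref{5.4.5} handles the backward transform. Adding \eqref{t3.10.1} yields \eqref{ABf-LTm}. For (iv), Lemma~\ref{l5.4} provides \eqref{ABf-Im-1}--\eqref{ABf-Im-2} for $u_f$, and Theorem~\ref{t4.8} provides them for $u_h$. For the identity \eqref{ABf-Im-3}, I would write $\int_0^t u_f=\SAB*g_1$ and $\int_0^t(t-s)u_f=\SAB*g_2$ as in Lemma~\ref{l5.4}, and use Fubini with Lemma~\ref{l4.7}(iv) to compute
$$A(\SAB*g_2)(t)=\int_0^t A\Big[\int_0^{t-s}(t-s-r)\SAB(r)\,\rmd r\Big]f(s)\,\rmd s=u_f(t)+2B(\SAB*g_1)(t)-\int_0^t(t-s)f(s)\,\rmd s.$$
Pulling $A$ inside the integral is justified because the map $x\mapsto\int_0^\tau(\tau-r)\SAB(r)x\,\rmd r$ is bounded from $\bX$ into $\dom(A)$ by \eqref{A-int-t-SAB}, \eqref{BSAB-XX} and \eqref{SAB-XY}, and is strongly continuous. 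Combining this with \eqref{4.8.7} for $u_h$ gives \eqref{ABf-Im-3}.
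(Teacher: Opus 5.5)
Your proposal is correct. For (iii) and (iv) it follows the paper essentially verbatim. In (iii) you add \eqref{5.5.8} to \eqref{t3.10.1}. In (iv) your Fubini computation is the paper's identity \eqref{5.5.10}--\eqref{5.5.11}, namely $\bigl(\SAB+2B(1*\SAB)-A(1_{\RR}*\SAB)\bigr)*f=1_{\RR}\cdot I*f$. Your remark that $x\mapsto\int_0^\tau(\tau-r)\SAB(r)x\,\rmd r$ is bounded and strongly continuous into $\dom(A)$ supplies the justification, left implicit in the paper, for moving $A$ through the integral.

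The genuine difference is in (i)--(ii), in how you verify $u_f''+2Bu_f'-Au_f=f$. You use Laplace transforms on both half-lines plus a cutoff. The paper stays in the time domain throughout.
\begin{itemize}
\item For $f\in\mathcal{C}^1(\RR,\bX)$, the paper writes $u_f=K_4(\cdot)f(0)+K_4*f'$ with $K_4(t)=\int_0^t\SAB(s)\,\rmd s$. Lemma~\ref{l4.7}(i) gives $\SAB'+2B\SAB-AK_4=I$, so the left-hand side collapses to $f(0)+1*f'=f$.
\item For $f\in\mathcal{C}(\RR,\bY)$, the paper uses \eqref{Sf-primeY} and the fact that $\SAB(\cdot)y$ is a classical solution of the homogeneous equation for $y\in\bY$.
\end{itemize}

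The paper's route needs no growth assumption on $f$ and no interchange of closed operators with improper integrals. Yours treats (i)--(iii) uniformly and rests only on the convolution formula $\mathcal{L}u_f=Q^{-1}(\lambda)\mathcal{L}f$ together with $Q(\lambda)Q^{-1}(\lambda)=I$. The price is the localization step and exponential bounds on $u_f'$, $u_f''$, $Bu_f'$ and $Au_f$, which Hille's theorem for closed operators requires. These bounds do follow from Lemma~\ref{l4.5}, \eqref{diff-SAB-3} and \eqref{5.3.1}, provided the modified $f$ is taken compactly supported, for instance $\chi f$ with a smooth scalar cutoff $\chi$. ``Exponential growth with the same regularity'' alone is not quite enough. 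In case (i), $u_f''=\SAB'(\cdot)f(0)+\SAB'*f'$ and $Au_f$ also require $f'$ to be exponentially bounded. In case (ii), the bound must hold in the $\bY$-norm.

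One slip is in your fallback. \eqref{diff-SAB-4} is not the identity needed for $f\in\mathcal{C}(\RR,\bY)$. In $B(\SAB*f)'$ and $A(\SAB*f)$ the operators act on the left of $\SAB$. What you need is $(\SAB(\cdot)y)''+2B(\SAB(\cdot)y)'=A\SAB(\cdot)y$ for $y\in\bY$, which is Lemma~\ref{l4.2}(iii). By contrast, \eqref{diff-SAB-4} has $A$ and $B$ acting on the right and requires $x\in\dom(A)$.
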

\begin{proof}
	Let $u_{x,y}: \mathbb{R} \to \bX$, $u_{x,y}(t) = C_{A,B}(t)\,x + S_{A,B}(t)\,y$. From Theorem~\ref{t3.1} and \eqref{cT-row 1} we know that $u_{x,y}$ is a classical solution of \eqref{AB}. Moreover, from Lemma~\ref{l3.4} we have
	\begin{equation}\label{5.5.1} 
		u_{x,y} \in \mathcal{C}^2(\mathbb{R}, \bX) \cap \mathcal{C}^1(\mathbb{R}, \bY) \cap \mathcal{C}(\mathbb{R}, \dom(A)), \;\mbox{for any}\; x \in \dom(A), y \in \bY.
	\end{equation}
	Fix $f \in \mathcal{C}^1(\mathbb{R}, \bX)$. From Lemma~\ref{l5.2}(iii) and Lemma~\ref{l5.3}(i) we obtain
	\begin{equation}\label{5.5.2} 
		S_{A,B} * f \in \mathcal{C}^2(\mathbb{R}, \bX) \cap \mathcal{C}^1(\mathbb{R}, \bY) \cap \mathcal{C}(\mathbb{R}, \dom(A)).
	\end{equation}
	Using \eqref{A-int-SAB}, \eqref{Sf-prime}, \eqref{5.3.1} and \eqref{5.3.3} we evaluate
	\begin{align}\label{5.5.3}
		&(S_{A,B} * f)''(t) + 2B(S_{A,B} * f)'(t) - A(S_{A,B} * f)(t) = S_{A,B}'(t)\,f(0) + (S_{A,B}' * f')(t) \nonumber\\
		&\qquad+ 2B\Bigl(S_{A,B}(t)\,f(0) + (S_{A,B} * f')(t)\Bigr) - A\Bigl(\int_0^t S_{A,B}(s)\,f(0)\,\rmd s + (K_4 * f')(t)\Bigr)\nonumber\\
		&= S_{A,B}'(t)\,f(0) + 2B\SAB(t)\,f(0) - A\int_0^t S_{A,B}(s)\,f(0)\,\rmd s+\bigl((S_{A,B}' + 2B\SAB - AK_4) * f'\bigr)(t) \nonumber\\
		&= f(0) + (I * f')(t) = f(0) + \int_0^t f'(s)\,ds = f(t),\;\mbox{for any}\; t\in\RR.
	\end{align}
	Moreover,
	\begin{equation}\label{5.5.4} 
		(S_{A,B} * f)(0) = 0, \; (S_{A,B} * f)'(0) = 0.
	\end{equation}
	From \eqref{5.5.2}, \eqref{5.5.3} and \eqref{5.5.4} we infer that $S_{A,B} * f$ is a classical solution of \eqref{ABf} with $x = y = 0$. We conclude that $u = u_{x,y} + S_{A,B} * f$ is a classical solution of \eqref{ABf}. Moreover, from \eqref{5.5.1} and \eqref{5.5.2} we have $u = u_{x,y} + S_{A,B} * f \in \mathcal{C}^2(\mathbb{R}, \bX) \cap \mathcal{C}^1(\mathbb{R}, \bY) \cap \mathcal{C}(\mathbb{R}, \dom(A))$. If $u_1$ and $u_2$ are two classical solutions of \eqref{ABf} then $u_1 - u_2$ is a classical solution of \eqref{AB} with $x = y = 0$. Hence, $u_1 = u_2$ by Theorem~\ref{t3.1}. Assertion (i) is now proved.
	
	Fix $f \in \mathcal{C}(\mathbb{R}, \bY)$. From Lemma~\ref{l5.2}(iv) and Lemma~\ref{l5.3}(ii) we have
	\begin{equation}\label{5.5.5}  S_{A,B} * f \in \mathcal{C}^2(\mathbb{R}, \bX) \cap \mathcal{C}^1(\mathbb{R}, \bY) \cap \mathcal{C}(\mathbb{R}, \dom(A)).
	\end{equation}
	Using \eqref{Sf-primeY} and Lemma~\ref{l4.2}(iii) we compute
	\begin{align}\label{5.5.6}
		&(S_{A,B} * f)''(t) + 2B(S_{A,B} * f)'(t) - A\,S_{A,B} * f \nonumber\\
		&\qquad= (S_{A,B}(\cdot)|_\bY)'' * f + f + 2B\bigl(S_{A,B}(\cdot)|_\bY\bigr)' * f - A\,S_{A,B}(\cdot)|_\bY * f \nonumber\\
		&\qquad= f + \Bigl((S_{A,B}(\cdot)|_\bY)'' + 2B(S_{A,B}(\cdot)|_\bY)' - A\,S_{A,B}(\cdot)|_\bY\Bigr) * f \nonumber\\
		&\qquad= f + 0 * f = f.
	\end{align}
	Yet again one can readily see that $(S_{A,B} * f)(0) = 0$ and  $(S_{A,B} * f)'(0) = 0$. From \eqref{5.5.5} and \eqref{5.5.6} we obtain that $S_{A,B} * f$ is a classical solution of \eqref{ABf} with $x = y = 0$. Hence, $u = u_{x,y} + S_{A,B} * f$ is a classical solution of \eqref{ABf}. Also, from \eqref{5.5.1} and \eqref{5.5.5} we have $u = u_{x,y} + S_{A,B} * f \in \mathcal{C}^2(\mathbb{R}, \bX) \cap \mathcal{C}^1(\mathbb{R}, \bY) \cap \mathcal{C}(\mathbb{R}, \dom(A))$. The uniqueness follows again from the uniqueness of the associated homogeneous equation \eqref{AB} proved in Theorem~\ref{t3.1}.
	
	(iii) Fix $x \in \dom(B)$, $y \in \bX$, $f \in \mathcal{M}_{-\nu}(\mathbb{R}, \bX)$. Then, by Theorem~\ref{t3.10}, \eqref{IVP-x0} and \eqref{IVP-0y} we obtain $u_{x,y} = C_{A,B}(\cdot)\,x + S_{A,B}(\cdot)\,y$ is a Laplace Transform mild solution of \eqref{AB}, hence $u_{x,y}$ satisfies \eqref{Def-Lap-Mild}. It follows from \eqref{5.4.5} that
	\begin{equation}\label{5.5.8} \bigl(\mathcal{L}(S_{A,B} * f)\bigr)(\lambda) = Q^{-1}(\lambda)\,(\mathcal{L}f)(\lambda), \; \bigl(\mathcal{L}(S_{A,B} * f)(-\cdot)\bigr)(\lambda) = Q^{-1}(-\lambda)\,(\mathcal{L}f(-\cdot))(\lambda),
	\end{equation}
	for any $\lambda \in \mathbb{C}^+_{\max\{\omega, \nu\}}$. Since $u_{x,y}$ satisfies \eqref{Def-Lap-Mild}, by \eqref{5.5.8} we conclude that $u = u_{x,y} + S_{A,B} * f$ satisfies \eqref{ABf-LTm}, that is, $u = u_{x,y} + S_{A,B} * f$ is a Laplace Transform mild solution of \eqref{ABf}. If $u_1$ and $u_2$ are two Laplace Transform mild solutions of \eqref{ABf} then from \eqref{ABf-LTm} we immediately infer that $(\mathcal{L}(u_1 - u_2))(\lambda) = 0$ and $(\mathcal{L}(u_1 - u_2)(-\cdot))(\lambda) = 0$ for any $\lambda \in \mathbb{C}^+_{\max\{\omega,\nu\}}$, which implies that $u_1 = u_2$, proving (iii).
	
	(iv) Fix $x \in \dom(B)$, $y \in \bX$, $f \in L^1_{\mathrm{loc}}(\mathbb{R}, X)$. From Theorem~\ref{t4.8} we know that $u_{x,y} = C_{A,B}(\cdot)\,x + S_{A,B}(\cdot)\,y$ is an integral mild solution, hence $u_{x,y}$ satisfies \eqref{domain-cond}, \eqref{cont-cond} and \eqref{Int-Mild}. By Lemma~\ref{l5.4}, $S_{A,B} * f$ satisfies conditions \eqref{ABf-Im-1} and \eqref{ABf-Im-2}. Hence,
	\begin{equation}\label{5.5.9}
		u = u_{x,y} + S_{A,B} * f \text{ satisfies } \eqref{ABf-Im-1} \text{ and } \eqref{ABf-Im-2}.
	\end{equation}
	We note that \eqref{A-int-t-SAB} can be rewritten as
	\begin{equation}\label{5.5.10}
		S_{A,B} + 2B(1 * S_{A,B}) - A(1_{\mathbb{R}} * S_{A,B}) = 1_{\mathbb{R}} \cdot I.
	\end{equation}
	From \eqref{5.4.1}, \eqref{5.4.2} and \eqref{5.5.10} we evaluate
	\begin{align}\label{5.5.11}
		(S_{A,B} * f)&(t)+2B\int_0^t (S_{A,B} * f)(s)\,\rmd s - A\int_0^t (t-s)(S_{A,B} * f)(s)\,\rmd s \nonumber\\
		&= \Bigl(S_{A,B} + 2B(1 * S_{A,B}) - A(1_{\mathbb{R}} * S_{A,B})\Bigr) * f\,(t) = (1_{\mathbb{R}} \cdot I * f)(t) = \int_0^t (t-s)\,f(s)\,\rmd s 
	\end{align}
	for any $t \in \mathbb{R}$. Since $u_{x,y}$ satisfies \eqref{Int-Mild}, from \eqref{5.5.11} we obtain $u = u_{x,y} + S_{A,B} * f$ satisfies \eqref{ABf-Im-3}. From \eqref{5.5.9}  we conclude that $u = u_{x,y} + S_{A,B} * f$ is an integral mild solution of \eqref{ABf}. If $u_1$ and $u_2$ are two integral mild solutions of \eqref{ABf} then $u_1 - u_2$ is an integral mild solution of \eqref{AB} with $x = y = 0$. Hence, by Theorem~\ref{t4.8}, we infer that $u_1 = u_2$, proving (iv).
\end{proof}
\begin{corollary}\label{c5.6}
	Assume \textnormal{(ABY1)--(ABY3)}. Then,
	\begin{itemize}
		\item[(i)] If $f\in\mathcal{C}(\RR,\bX)$ and $u$ is a classical solution of \eqref{ABf} such that $u\in\mathcal{C}^2(\mathbb{R}, \bX) \cap \mathcal{C}^1(\mathbb{R}, \bY) \cap \mathcal{C}(\mathbb{R}, \dom(A))$ then $u$ satisfies \eqref{ABf-Sol}; 
		\item[(ii)] If $h$ is a classical solution of \eqref{AB} then
		\begin{align}\label{5.6.7}  
			h(t+s) + h(t-s)&= 2C_{A,B}(s)\,h(t) - 4\int_0^s S_{A,B}(s-\xi)\,B\,h'(t-\xi)\,d\xi\nonumber\\
			&=2C_{A,B}(s)\,h(t)-4S_{A,B}(s)\,B\,h(t) +4 \int_0^s S_{A,B}'(s-\xi)\,B\,h(t-\xi)\,d\xi,
		\end{align}
		\begin{align}\label{5.6.8} 
			h(t+s) - h(t-s)&= 2S_{A,B}(s)\,h'(t) + 4\int_0^s S_{A,B}(s-\xi)\,B\,h'(t-\xi)\,d\xi\nonumber\\
			&=2S_{A,B}(s)\,h'(t)+4S_{A,B}(s)\,B\,h(t)-4 \int_0^s S_{A,B}'(s-\xi)\,B\,h(t-\xi)\,d\xi 
		\end{align}
		for any $t, s \in \mathbb{R}$.
	\end{itemize}
\end{corollary}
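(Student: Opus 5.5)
For (i), set $x=u(0)$, $y=u'(0)$ and let $w$ be the function given by \eqref{ABf-Sol} with these data. The plan is to show $u=w$ via uniqueness of integral mild solutions, Theorem~\ref{t5.5}(iv). We have $f\in\mathcal{C}(\RR,\bX)\subset L^1_{\mathrm{loc}}$, and $x=u(0)\in\dom(A)\subseteq\dom(B)$, $y=u'(0)\in\bY\subseteq\bX$, so $w$ is the unique integral mild solution.

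It then remains to check that the classical solution $u$ satisfies \eqref{ABf-Im-1}--\eqref{ABf-Im-3}. Since $u\in\mathcal{C}(\RR,\dom(A))$ and $u\in \mathcal{C}^1(\RR,\bY)$, both integrals lie in the right domains. Also $A$ and $B|_{\bY}$ commute with integration, so the continuity conditions hold. Integrate $u''+2Bu'-Au=f$ twice, as in the proof of Theorem~\ref{t3.1}, using $\int_0^tBu'=Bu(t)-Bx$; this is legitimate by $Bu'\in\mathcal{C}(\RR,\bX)$ and closedness of $B$. The result is \eqref{ABf-Im-3} with the constant $x+t(y+2Bx)$. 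Here I note that the stated \eqref{ABf-Im-3} reads $t(y+Bx)$, which is inconsistent with \eqref{Int-Mild}; I will use the version consistent with \eqref{Int-Mild}. This gives (i).

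For (ii), fix $t$ and let $g(s)=h(t+s)$, $k(s)=h(t-s)$. The function $g$ is a classical solution of \eqref{AB} with data $(h(t),h'(t))$. The function $k$ satisfies $k''-2Bk'=Ak$, i.e.\ $k''+2Bk'-Ak=4Bk'(s)=-4Bh'(t-s)$, with data $(h(t),-h'(t))$. Here $f(s)=-4Bh'(t-s)$ is continuous into $\bX$ because $h'\in\mathcal{C}(\RR,\bY)$ and $B|_\bY\in\mathcal{B}(\bY,\bX)$.

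The key check is that $k$ has the regularity $\mathcal{C}^2(\RR,\bX)\cap\mathcal{C}^1(\RR,\bY)\cap\mathcal{C}(\RR,\dom(A))$ required in (i). This holds because $h$ has it, which follows from uniqueness in Theorem~\ref{t3.1} combined with Lemma~\ref{l3.4}: $h(0)\in\dom(A)$ and $h'(0)\in\dom(B)$. I expect the main obstacle here: the argument needs $h'(0)\in\bY$, which is not immediate from the definition of a classical solution. I would deduce it by showing any classical solution coincides with the $\cT$-trajectory. Concretely, the pair $(h,h'+Bh)$ should lie in $\mathcal{C}^1(\RR,\bY\times\bX)$ once one applies the uniqueness argument of Theorem~\ref{t3.1} to the integrated function $w$. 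Alternatively, I would simply read "classical solution" in (ii) as including this regularity.

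Applying (i) to $k$ gives
\[
h(t-s)=\CAB(s)h(t)-\SAB(s)h'(t)-4\int_0^s\SAB(s-\xi)Bh'(t-\xi)\,d\xi,
\]
and Theorem~\ref{t3.1} gives $h(t+s)=\CAB(s)h(t)+\SAB(s)h'(t)$. Adding and subtracting these yields the first lines of \eqref{5.6.7} and \eqref{5.6.8}. For the second lines, write $Bh'(t-\xi)=-\frac{d}{d\xi}Bh(t-\xi)$, where $Bh\in\mathcal{C}^1(\RR,\bX)$ as in Lemma~\ref{l3.4}. Then integrate by parts against $\SAB(s-\xi)$, using Lemma~\ref{l4.2}(i) and $\SAB(0)=0$; the boundary term is $-\SAB(s)Bh(t)$, which produces the stated formulas.
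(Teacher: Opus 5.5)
Your argument is essentially the paper's. Part (i) integrates twice and invokes uniqueness in Theorem~\ref{t5.5}(iv). Part (ii) views $h(t\pm\cdot)$ as solutions of \eqref{ABf} with forcing $\mp 4Bh'(t-\cdot)$; the paper packages these as $F_{t,\pm}=h(t+\cdot)\pm h(t-\cdot)$, which is the same computation, and then integrates by parts the same way. Your remark that \eqref{ABf-Im-3} should read $t(y+2Bx)$ is also correct.

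The obstacle you flag is real, and the paper has it too. It asserts $F_{t,\pm}\in\mathcal{C}^1(\RR,\bY)$ ``from Theorem~\ref{t3.1}'', but that theorem only covers data with $h'(0)\in\bY$. Your first proposed fix does not deliver $h'(0)\in\bY$ as stated. Running the argument of Theorem~\ref{t3.1} on the integrated function $w$ only identifies $\tbinom{h}{h'+Bh}$ with the mild orbit $\cT(\cdot)\tbinom{h(0)}{h'(0)+Bh(0)}$. That orbit is $\mathcal{C}^1$ into $\bY\times\bX$ exactly when $h'(0)\in\bY$, which is the thing to be proved.

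The clean way out is that your proof of (i) never actually uses $u\in\mathcal{C}^1(\RR,\bY)$:
\begin{itemize}
\item $u\in\mathcal{C}(\RR,\dom(A))\subseteq\mathcal{C}(\RR,\bY)$ is automatic for any classical solution, since $u$ and $Au$ are continuous.
\item $Bu'=\tfrac12(f+Au-u'')$ is continuous by the equation itself. Closedness of $B$ then justifies $\int_0^t Bu'(s)\,\rmd s=Bu(t)-Bu(0)$.
\end{itemize}
Hence (i) holds for every classical solution of \eqref{ABf} with continuous $f$.

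For (ii), apply this to $h(t+\cdot)$ with $f=0$, and to $h(t-\cdot)$ with $f=-4Bh'(t-\cdot)$, which is continuous by the equation $2Bh'=Ah-h''$. For the integration by parts, get $Bh\in\mathcal{C}^1(\RR,\bX)$ with $(Bh)'=Bh'$ from Lemma~\ref{App1}. This needs $Bh$ continuous, which holds because $B|_{\dom(A)}\in\mathcal{B}(\dom(A),\bY)$. This proves (ii) for all classical solutions $h$. It never needs $h'(0)\in\bY$, nor the existence part of Theorem~\ref{t3.1} that you invoked for $h(t+\cdot)$.
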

\begin{proof}(i) Assume $u$ is a classical solution of \eqref{ABf} such that $u\in\mathcal{C}^2(\mathbb{R}, \bX) \cap \mathcal{C}^1(\mathbb{R}, \bY) \cap \mathcal{C}(\mathbb{R}, \dom(A))$. Then, integrating twice in \eqref{ABf} we immediately see that $u$ is an integral mild solution of \eqref{ABf}. From Theorem~\ref{t5.5}(iv) we conclude that it satisfies \eqref{ABf-Sol}, proving (i).
	
	(ii) Let $h$ be a classical solution of \eqref{AB} and let $F_{t,\pm}: \mathbb{R} \to\bX$ be defined by
	\begin{equation}\label{5.6.1}
		F_{t,\pm}(s) = h(t+s) \pm h(t-s), \;\mbox{for}\; t \in \mathbb{R}.
	\end{equation}
	Then, from Theorem~\ref{t3.1} we infer that
	\begin{equation}\label{5.6.2}
		F_{t,\pm} \in \mathcal{C}^2(\mathbb{R}, \bX) \cap \mathcal{C}^1(\mathbb{R}, \bY) \cap \mathcal{C}(\mathbb{R}, \dom(A))\;\mbox{for any}\; t \in \mathbb{R}.
	\end{equation}
	Moreover,
	\begin{equation}\label{5.6.3}
		F_{t,\pm}'(s) = h'(t+s) \mp h'(t-s), \; F_{t,\pm}''(s) = h''(t+s) \pm h''(t-s),\;\mbox{for any}\; t,s \in \mathbb{R}.
	\end{equation}
	It follows that
	\begin{equation}\label{5.6.4} 
		F_{t,\pm}''(s) + 2B\,F_{t,\pm}'(s) - A\,F_{t,\pm}(s) = \mp 4B\,h'(t-s)\;\mbox{for any}\; t,s \in \mathbb{R}.
	\end{equation}
	Also,
	\begin{equation}\label{5.6.5} 
		F_{t,+}(0) = 2h(t), \; F_{t,+}'(0) = 0, \; F_{t,-}(0) = 0, \; F_{t,-}'(0) = 2h'(t)\;\mbox{for any}\; t \in \mathbb{R}.
	\end{equation}
	From \eqref{5.6.2} and since $B|_\bY \in \mathcal{B}(\bY, \bX)$ we can see that $B\,h'(t-\cdot) \in \mathcal{C}(\mathbb{R}, \bX)$ for any $t \in \mathbb{R}$. From (i), \eqref{5.6.2} and \eqref{5.6.4} we obtain
	\begin{equation}\label{5.6.6} 
		F_{t,\pm}(s) = C_{A,B}(s)\,F_{t,\pm}(0) + S_{A,B}(s)\,F_{t,\pm}'(0) \mp 4\int_0^s S_{A,B}(s-\xi)\,B\,h'(t-\xi)\,\rmd\xi\;\mbox{for any}\; t,s \in \mathbb{R}.
	\end{equation}
	Integrating by parts one can readily check that
	\begin{equation}\label{5.6.9}
		\int_0^s S_{A,B}(s-\xi)\,B\,h'(t-\xi)\,\rmd\xi = S_{A,B}(s)\,B\,h(t) - \int_0^s S_{A,B}'(s-\xi)\,B\,h(t-\xi)\,\rmd\xi
	\end{equation}
	The identities \eqref{5.6.7} and \eqref{5.6.8} follow shortly from \eqref{5.6.5}, \eqref{5.6.6} and \eqref{5.6.9}.
\end{proof}

\noindent\textbf{Trigonometric Identities for Generalized Cosine and Sine.}
In this subsection we look for various ``trigonometric" type functional equations satisfied by generalized cosine and sine operator families $\{\CAB(t)\}_{t\in\RR}$ and $\{\SAB(t)\}_{t\in\RR}$ defined in Section~\ref{s3}. We cannot proceed exactly as in the case of cosine and sine families since the generalized cosine and sine are not even and odd, respectively. Moreover, differentiating certain identity only complicates computations since many formulas have several additional terms, all of which would vanish in the case $B=0$. Instead we will use the definition of the generalized cosine and sine operator families as solutions of \eqref{IVP-x0} and \eqref{IVP-0y} and use the results on the inhomogeneous equation. 
\begin{lemma}\label{l5.1}
	Assume \textnormal{(ABY1)--(ABY3)}. Then,
	\begin{itemize}
		\item[(i)] $\SAB(t+s) = \CAB(t)\SAB(s) + \SAB(t)\SAB'(s), \;\mbox{for any}\; t, s \in \mathbb{R}.$
		\item[(ii)] $\CAB(t+s)|_\bY = \CAB(t)\CAB(s)|_\bY + \SAB(t)\CAB'(s)|_\bY, \;\mbox{for any}\; t, s \in \mathbb{R}.$
	\end{itemize}
\end{lemma}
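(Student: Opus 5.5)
The plan is to use uniqueness of classical solutions (Theorem~\ref{t3.1}) for time translates, and then pass to the full spaces by density and boundedness. The key observation is that if $h$ is a classical solution of \eqref{AB}, then for fixed $s\in\RR$ the translate $t\mapsto h(t+s)$ is again a classical solution of \eqref{AB}, with initial data $h(s)$ and $h'(s)$. Indeed, all the conditions of Definition~\ref{d3.1} are invariant under translation in $t$.

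Moreover, if $h(s)\in\dom(A)$ and $h'(s)\in\bY$, then Theorem~\ref{t3.1} gives
\[
h(t+s)=T_{11}(t)h(s)+T_{12}(t)\bigl(h'(s)+Bh(s)\bigr).
\]
By \eqref{cT-row 1} this equals $\CAB(t)h(s)+\SAB(t)h'(s)$. In other words, for such data the unique classical solution is $\CAB(\cdot)h(s)+\SAB(\cdot)h'(s)$.

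\emph{Step 1 (dense subspace for (i)).} Fix $x\in\bY$ and let $h=\SAB(\cdot)x$. By Lemma~\ref{l4.2}(iii), $h$ is a classical solution with $h(0)=0$ and $h'(0)=x$. Lemma~\ref{l4.1}(iv) gives $h(s)=\SAB(s)x\in\dom(A)$, and \eqref{r4.4.1} gives $h'(s)=\SAB'(s)x\in\bY$. The observation above then yields
\[
\SAB(t+s)x=\CAB(t)\SAB(s)x+\SAB(t)\SAB'(s)x\quad\text{for } x\in\bY.
\]

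\emph{Step 2 (dense subspace for (ii)).} Fix $y\in\dom(A)$ and let $h=\CAB(\cdot)y$. This is a classical solution by Lemma~\ref{l4.3}(ii). Lemma~\ref{l4.1}(ii) gives $h(s)=\CAB(s)y\in\dom(A)$, and \eqref{r4.4.2} gives $h'(s)=\CAB'(s)y\in\bY$. The same argument then gives identity (ii) for all $y\in\dom(A)$.

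\emph{Step 3 (extension by density).} For (i), both sides define bounded operators on $\bX$:
\begin{itemize}
\item $\SAB(t+s)\in\mathcal B(\bX)$.
\item $\SAB(s)\in\mathcal B(\bX,\bY)$ by \eqref{SAB-XY}, and $\CAB(t)\in\mathcal B(\dom(B),\bX)$ by \eqref{CAB-XX}. Since $\bY\hookrightarrow\dom(B)$ continuously by \eqref{space embedding}, the composition $\CAB(t)\SAB(s)$ is bounded on $\bX$.
\item $\SAB'(s)\in\mathcal B(\bX)$ by \eqref{SAB-prime-XX}, and $\SAB(t)\in\mathcal B(\bX)$, so $\SAB(t)\SAB'(s)$ is bounded on $\bX$.
\end{itemize}
Since $\dom(A)\subseteq\bY$ is dense in $\bX$ (Lemma~\ref{l4.6}(iv)), identity (i) extends to all of $\bX$.

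For (ii), both sides are bounded from $\bY$ to $\bX$:
\begin{itemize}
\item $\CAB(t+s)|_\bY$ is bounded by \eqref{CAB-YY}.
\item $\CAB(s)\in\mathcal B(\bY)$ by \eqref{CAB-YY}, followed by $\CAB(t)\in\mathcal B(\bY,\bX)$.
\item $\CAB'(s)\in\mathcal B(\bY,\bX)$ by \eqref{CAB-prime-YX}, followed by $\SAB(t)\in\mathcal B(\bX)$.
\end{itemize}
Since $\dom(A)$ is dense in $\bY$ in the $\bY$-norm (Lemma~\ref{l4.6}(v)), identity (ii) extends to all of $\bY$.

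The only delicate point is checking that the initial data of the translated solutions lie in $\dom(A)\times\bY$, so that Theorem~\ref{t3.1} applies. This is exactly what Lemma~\ref{l4.1} and Remark~\ref{r4.4} provide.
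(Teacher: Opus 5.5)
Your proposal is correct and follows essentially the same route as the paper: you view $\SAB(\cdot+s)x$ and $\CAB(\cdot+s)x$ as classical solutions, identify them through the uniqueness in Theorem~\ref{t3.1}, and extend by the density results of Lemma~\ref{l4.6} together with the bounds of Lemma~\ref{l4.5}. The differences are cosmetic: you start (i) from $x\in\bY$ instead of $x\in\dom(A)$, and you spell out what the paper leaves implicit, namely that the translated initial data lie in $\dom(A)\times\bY$ (via Lemma~\ref{l4.1} and Remark~\ref{r4.4}) and that \eqref{cT-row 1} turns the formula of Theorem~\ref{t3.1} into $\CAB(t)h(s)+\SAB(t)h'(s)$.
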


\begin{proof}
	(i) Fix $x \in \dom(A)$, $s \in \mathbb{R}$ and define $f_{s,x} : \mathbb{R} \to\bX$ by $f_{s,x}(t) = \SAB(t+s)x$. Since $\SAB(\cdot)x$ is a classical solution of \eqref{IVP-0y}, we infer that $f_{s,x}$ is a classical solution of 
	\eqref{AB} with $f_{s,x}(0)=\SAB(s)x$ and $f_{s,x}'(0)=\SAB'(s)x$. Moreover,  $Af_{s,x}(\cdot) \in \mathcal{C}(\mathbb{R}, \bX)$ by Lemma~\ref{l4.2} (iii). From Theorem~\ref{t3.1} we conclude that
	$f_{s,x}(t) = \CAB(t)\SAB(s)x + \SAB(t)\SAB'(s)x$
	for any $t \in \mathbb{R}$. Hence, $\SAB(t+s)x = \CAB(t)\SAB(s)x + \SAB(t)\SAB'(t)x$. for any $t, s \in \mathbb{R}$, $x \in \dom(A)$. Hence, assertion (i) holds in the special case when $x\in\dom(A)$. The general case follows since $\SAB(t), \SAB(s), \SAB(t+s) \in \mathcal{B}(\bX,\bY)$, $\SAB'(t) \in \mathcal{B}(\bX)$, $\CAB(t)|_\bY \in \mathcal{B}(\bY)$, by Lemma~\ref{l4.5} and $\dom(A)$ is dense in  $\bX$ by Lemma~\ref{l4.6} (iv).
	
	(ii) Fix again $x \in \dom(A)$, $s \in \mathbb{R}$ and let $g_{s,x} : \mathbb{R} \to\bX$ be the function defined by $g_{s,x}(t) = \CAB(t+s)x$. From Lemma~\ref{l4.3}(ii) we have that $\CAB(\cdot)x$ is a classical solution of \eqref{IVP-x0}, thus $g_{s,x}$ is a classical solution of \eqref{AB} with 
	$g_{s,x}(0)=\CAB(s)x$ and $g_{s,x}'(0)=\CAB'(s)x$. Moreover, $Ag_{s,x}(\cdot) \in \mathcal{B}(\mathbb{R}, \bX)$.  Using again Theorem~\ref{t3.1} we infer that $\CAB(t+s)x = \CAB(t)\CAB(s)x + \SAB(t)\CAB'(s)x$ for any $t,s\in\mathbb{R}$, $x \in \dom(A)$.
	From Lemma~\ref{l4.5} we know that $\CAB(t)|_\bY, \CAB(s)|_\bY, \CAB(t+s)|_\bY \in \mathcal{B}(\bY)$, $\CAB'(t) \in \mathcal{B}(\bY,\bX)$, $\SAB(t) \in \mathcal{B}(\bX,\bY)$. Since $\dom(A)$ is dense in $\bY$ in the $\bY$-norm by Lemma~\ref{l4.6}(v), we infer that assertion (ii) holds for any $x \in\bY$, proving the lemma.
\end{proof}
\begin{lemma}\label{l5.6}
	Assume \textnormal{(ABY1)--(ABY3)}. Then,
	\begin{itemize}
		\item[(i)] $C_{A,B}(t+s)\,x - C_{A,B}(t-s)\,x = 2S_{A,B}(s)\,C_{A,B}'(t)\,x + 4\displaystyle\int_0^s S_{A,B}(s-\xi)\,BC_{A,B}'(t-\xi)\,x\,\rmd\xi$
		for any $t, s \in \mathbb{R}$, $x \in \dom(A)$;
		\item[(ii)] $C_{A,B}(t+s)\,y - C_{A,B}(t-s)\,y = 2S_{A,B}(s)\,C_{A,B}'(t)\,y- 4\displaystyle\int_0^s S'_{A,B}(s-\xi)\,BC_{A,B}(t-\xi)\,y\,\rmd\xi+ 4S_{A,B}(s)\,BC_{A,B}(t)\,y $
		for any $t, s \in \mathbb{R}$, $y \in \bY$;
		\item[(iii)] $C_{A,B}(t+s)\,x + C_{A,B}(t-s)\,x = 2C_{A,B}(s)\,C_{A,B}'(t)\,x - 4\displaystyle\int_0^s S_{A,B}(s-\xi)\,BC_{A,B}'(t-\xi)\,x\,\rmd\xi$
		for any $t, s \in \mathbb{R}$, $x \in \dom(A)$;
		\item[(iv)] $C_{A,B}(t+s)\,y + C_{A,B}(t-s)\,y = 2C_{A,B}(s)\,C_{A,B}(t)\,y+ 4\displaystyle\int_0^s S'_{A,B}(s-\xi)\,BC_{A,B}(t-\xi)\,y\,\rmd\xi- 4S_{A,B}(s)\,BC_{A,B}(t)\,y$
		for any $t, s \in \mathbb{R}$, $y \in \bY$;
		\item[(v)] $S_{A,B}(t+s)\,y - S_{A,B}(t-s)\,y = 2S_{A,B}(s)\,S_{A,B}'(t)\,y + 4\displaystyle\int_0^s S_{A,B}(s-\xi)\,BS_{A,B}'(t-\xi)\,y\,\rmd\xi$
		for any $t, s \in \mathbb{R}$, $y \in \bY$;
		\item[(vi)] $S_{A,B}(t+s)\,x - S_{A,B}(t-s)\,x = 2S_{A,B}(s)\,S_{A,B}'(t)\,x - 4\displaystyle\int_0^s S_{A,B}'(s-\xi)\,BS_{A,B}(t-\xi)\,x\,\rmd\xi+4S_{A,B}(s)\,BS_{A,B}(t)\,x $
		for any $t, s \in \mathbb{R}$, $x \in \bX$;
		\item[(vii)] $S_{A,B}(t+s)\,y + S_{A,B}(t-s)\,y = 2C_{A,B}(s)\,S_{A,B}'(t)\,y - 4\displaystyle\int_0^s S_{A,B}(s-\xi)\,BS_{A,B}'(t-\xi)\,y\,\rmd\xi$
		for any $t, s \in \mathbb{R}$, $y \in \bY$;
		\item[(viii)] $S_{A,B}(t+s)\,x + S_{A,B}(t-s)\,x = 2C_{A,B}(s)\,S_{A,B}(t)\,x + 4\displaystyle\int_0^s S_{A,B}'(s-\xi)\,BS_{A,B}(t-\xi)\,x\,\rmd\xi- 4S_{A,B}(s)\,BS_{A,B}(t)\,x  $ for any $t, s \in \mathbb{R}$, $y \in \bX$.
	\end{itemize}
\end{lemma}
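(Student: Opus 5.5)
The plan is to derive all eight identities from Corollary~\ref{c5.6}(ii), formulas \eqref{5.6.7}--\eqref{5.6.8}. Each is applied to a suitable classical solution $h$ of \eqref{AB}, and a density argument then removes any extra regularity on the initial data.

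For (i) and (iii), fix $x\in\dom(A)$. By Lemma~\ref{l4.3}(ii), $h=\CAB(\cdot)x$ is a classical solution of \eqref{AB} lying in $\mathcal{C}^2(\RR,\bX)\cap\mathcal{C}^1(\RR,\bY)\cap\mathcal{C}(\RR,\dom(A))$, with $h'(t)=\CAB'(t)x$. Inserting this $h$ into the first lines of \eqref{5.6.8} and \eqref{5.6.7} gives (i) and (iii) immediately. Note that in (iii) the first term should read $2\CAB(s)\CAB(t)x$; the statement's $\CAB'$ looks like a misprint.

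For (ii) and (iv), first take $x\in\dom(A)$ and use the second lines of \eqref{5.6.8} and \eqref{5.6.7}. These contain no derivative of $h$ under $B$:
\begin{equation*}
h(t+s)-h(t-s)=2\SAB(s)h'(t)+4\SAB(s)Bh(t)-4\int_0^s\SAB'(s-\xi)Bh(t-\xi)\,\rmd\xi .
\end{equation*}
Here $h(t)=\CAB(t)x$, so this is exactly (ii) on $\dom(A)$, and (iv) comes out the same way. To pass to $y\in\bY$, we use density of $\dom(A)$ in $\bY$ (Lemma~\ref{l4.6}(v)) and check that each term depends continuously on the $\bY$-norm, locally uniformly in $\xi$:
\begin{itemize}
\item $\CAB(t\pm s)$ maps boundedly into $\bX$ by \eqref{CAB-YY};
\item $\SAB(s)\CAB'(t)$ is bounded from $\bY$ to $\bX$ by \eqref{CAB-prime-YX} together with $\SAB(s)\in\mathcal{B}(\bX)$;
\item $\SAB(s)B\CAB(t)$ is bounded by \eqref{BCAB-YX};
\item the integral term is bounded because $\SAB'(s-\xi)\in\mathcal{B}(\bX)$ has exponential bounds \eqref{SAB-prime-XX} and $B\CAB(t-\xi)$ obeys \eqref{BCAB-YX}, uniformly for $\xi$ in compact sets; dominated convergence then lets us pass the limit through the integral.
\end{itemize}

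Items (v)--(viii) follow the same pattern with $h=\SAB(\cdot)y$. For $y\in\bY$, Lemma~\ref{l4.2}(iii) shows that $h$ is a classical solution in the required class, with $h(0)=0$ and $h'=\SAB'(\cdot)y$.
\begin{itemize}
\item The first lines of \eqref{5.6.8} and \eqref{5.6.7} give (v) and (vii) directly.
\item The second lines give (vi) and (viii) for $y\in\bY$.
\item These two extend to all $x\in\bX$ because $\dom(A)\subseteq\bY$ is dense in $\bX$ (Lemma~\ref{l4.6}(iv)), and every term is bounded in the $\bX$-norm: $\SAB(t)\in\mathcal{B}(\bX)$, $\SAB'(s)\in\mathcal{B}(\bX)$, and $B\SAB(\tau)$ is bounded by \eqref{BSAB-XX} with exponential bounds in $\tau$.
\item In (viii), the term $\CAB(s)\SAB(t)x$ makes sense because $\SAB(t)x\in\bY\subseteq\dom(B)$, and it is continuous in $x$ by \eqref{CAB-XX} combined with \eqref{SAB-XY}.
\end{itemize}

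The main obstacle is the bookkeeping in the density step. We must make sure each operator composition is bounded from the correct space: $\bY$ for (ii) and (iv), $\bX$ for (vi) and (viii). We also need the integrands to converge under the integral, which requires uniform-in-$\xi$ exponential bounds from Lemma~\ref{l4.5}. The algebraic part is just reading off Corollary~\ref{c5.6}(ii) after identifying $h(t)$, $h'(t)$, $h(0)$ and $h'(0)$ for each choice of $h$.
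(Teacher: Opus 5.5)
Your proposal is correct and follows essentially the paper's own route: apply \eqref{5.6.7}--\eqref{5.6.8} to $h=C_{A,B}(\cdot)x$ with $x\in\dom(A)$ and to $h=S_{A,B}(\cdot)y$ with $y\in\bY$, then extend (ii), (iv) to $\bY$ and (vi), (viii) to $\bX$ by density, using the bounds of Lemma~\ref{l4.5}. You rightly flag that (iii) should read $2C_{A,B}(s)C_{A,B}(t)x$, but (vii) contains the same kind of misprint and you let it pass: the first line of \eqref{5.6.7} with $h=S_{A,B}(\cdot)y$ gives $2C_{A,B}(s)S_{A,B}(t)y$, not $2C_{A,B}(s)S'_{A,B}(t)y$ (the printed version already fails at $s=t=0$), so (vii) follows ``directly'' only after that correction.
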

\begin{proof}
	Assertions (i) and (iii) follow from \eqref{5.6.7} and \eqref{5.6.8} for $h = C_{A,B}(\cdot)\,x$ with $x \in \dom(A)$. Similarly, assertions (v) and (vii) follow from \eqref{5.6.7} and \eqref{5.6.8} for $h = S_{A,B}(\cdot)\,y$ with $y \in\bY$. Assertions (ii) and (iv) follow from (i) and (iii) respectively, \eqref{5.6.7} and \eqref{5.6.8} and since $\dom(A)$ is dense in $\bY$ in the $\bY$-norm and all the terms in (i) and (iii) are operators from $\mathcal{B}(\bY)$. Assertions (vi) and (viii) follow from (v) and (vii), respectively, \eqref{5.6.7} and \eqref{5.6.8}, and since $\bY$ is dense in $\bX$ and all the terms in (vi) and (viii) are operators from $\mathcal{B}(\bX)$.
\end{proof}

\section{Special Cases and Examples}\label{s8}
In this section we discuss several special cases, (H1), (H2) and (H3), when Hypothesis (HAB) for the operators $A$, $B$ holds and present several examples of concrete damped waves models that satisfy our hypotheses. Even though we are especially interested in the case when $B$ is unbounded, our setup in (H1) includes the case when $B$ is bounded. This special case covers several classical examples, see Example~\ref{e8.1}, Example~\ref{e8.2} and Example~\ref{e8.6}.

\noindent\textbf{Hypothesis (H1).} \textit{We assume that $A_0:\dom(A_0)\subseteq\bX\to\bX$ is the generator of a $C_0$-group; $B\in\mathcal{B}(\bX)$ is such that $B(\dom(A_0))\subseteq\dom(A_0)$ and moreover, 
	$B\big|_{\dom(A_0)}\in\mathcal{B}(\dom(A_0))$; $W_0\in\mathcal{B}(\dom(A_0),\bX)$ and $A=A_0^2+W_0$ with $\dom(A)=\dom(A_0^2)$}.

The next special case is related to the case of linear elastic systems with structural damping, see, e.g., \cite{CR,N}. In this specific type of examples the linear operator $A$ can be represented as a quadratic polynomial depending on the leading order term $B_0$ of $B$. This particular subclass of operators that satisfy Hypothesis (HAB) include the interesting case of an equation whose associated undamped equation is not well-posed, see Example~\ref{e8.4} below, where the diffusion coefficient can be negative.

\noindent\textbf{Hypothesis (H2).}  \textit{We assume that $B_0:\dom(B_0)\subseteq\bX\to\bX$ is a $C_0$-group generator, and the operators $B_1,\widetilde{B}_1,E_0,E_1\in\mathcal{B}(\bX)$, and a constant $\gamma>-1$ are such that
	\begin{itemize}
		\item[(i)] $B_1(\dom(B_0))\subseteq\dom(B_0)$ and $(B_1)\big|_{\dom(B_0)}\in\mathcal{B}(\dom(B_0))$,
		\item[(ii)] $(B_0 B_1-B_1 B_0)x=\widetilde{B}_1 x$ for any $x\in\dom(B_0)$,
		\item[(iii)] $A=\gamma B_0^2+E_1 B_0+E_0$, $B=B_0+B_1$, with $\dom(A)=\dom(B_0^2)$ and $\dom(B)=\dom(B_0)$.
\end{itemize}}
The next special case is specific to higher order PDEs, when the ``leading order term" $A_0$ of $A$ has a degree equal to at least four times the degree of the leading order term of $B$.

\noindent\textbf{Hypothesis (H3).} \textit{ We assume that $A_0:\dom(A_0)\subseteq\bX\to\bX$, $B_0:\dom(B_0)\subseteq\bX\to\bX$ are
	generators of $C_0$-groups, and operators $B_1,\widetilde{B}_1,E_0,E_1,E_2\in\mathcal{B}(\bX)$ and $W_0\in\mathcal{B}(\dom(A_0),\bX)$ are such that
	\begin{itemize}
		\item[(i)] $A_0 B_0=B_0 A_0$,
		\item[(ii)] $\dom(A_0)\subseteq\dom(B_0^2)$,
		\item[(iii)] $A_0^\pm=\pm A_0-B_0$ generate $C_0$-groups on $\bX$,
		\item[(iv)] $(B_1)\big|_{\dom(A_0)}\in\mathcal{B}(\dom(A_0))$,
		\item[(v)] $(B_0 B_1-B_1 B_0)x=\widetilde{B}_1 x$ for any $x\in\dom(A_0^2)$,
		\item[(vi)] $A=A_0^2+E_2 B_0^2+E_1 B_0+E_0+W_0$, $B=B_0+B_1$ with $\dom(A)=\dom(A_0^2)$ and $\dom(B)=\dom(B_0)$.
\end{itemize}}

\begin{lemma}\label{l8.1}
	Each of the Hypotheses \textnormal{(H1), (H2), (H3)} is a special case of Hypothesis \textnormal{(HAB)}. 
\end{lemma}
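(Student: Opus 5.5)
We verify the six items of Hypothesis (HAB) for each of (H1), (H2), (H3) by choosing $A_0$, $B_0$, $B_1$, $\widetilde B_1$, $W_0$ explicitly. In each case the checks are elementary consequences of three facts: the closed graph theorem; the fact that real multiples of a $C_0$-group generator (including $0$) generate $C_0$-groups; and commutation of $A_0$ with $B_0$, understood as in the proof of Lemma~\ref{l2.1} via commuting resolvents.

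\emph{Case (H1).} We take $B_0=0$, $B_1=B$ and $\widetilde B_1=0$, and keep the given $A_0$ and $W_0$.
\begin{itemize}
\item[(i), (v)] These hold trivially, since $B_0=0$ commutes with everything and $B_0B_1-B_1B_0=0=\widetilde B_1$.
\item[(ii)] Since $\dom(B_0)=\bX$, the inclusion $\dom(A_0)\subseteq\dom(B_0)$ is automatic, and $B_0(\dom(A_0^2))=\{0\}\subseteq\dom(A_0)$.
\item[(iii)] We have $A_0^\pm=\pm A_0$, which generate the groups $\{e^{\pm tA_0}\}_{t\in\RR}$.
\item[(iv)] This is exactly the assumption on $B$ in (H1).
\item[(vi)] We get $A=A_0^2-0+W_0$ and $B=0+B_1$, with $\dom(B)=\bX=\dom(B_0)$.
\end{itemize}

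\emph{Case (H2).} We set $c=\sqrt{1+\gamma}>0$ and $A_0=cB_0$, so that $\dom(A_0)=\dom(B_0)$ and $\dom(A_0^2)=\dom(B_0^2)$. We keep $B_1$ and $\widetilde B_1$, and put $W_0=E_1B_0+E_0$.
\begin{itemize}
\item[(i)] Since $A_0=cB_0$, it commutes with $B_0$.
\item[(ii)] We have $\dom(A_0)=\dom(B_0)$ and $B_0(\dom(B_0^2))\subseteq\dom(B_0)$.
\item[(iii)] The operators $A_0^\pm=(\pm c-1)B_0$ are real multiples of a group generator, hence generate $C_0$-groups; this includes the value $0$ when $c=1$.
\item[(iv)] This is (H2)(i).
\item[(v)] The commutator identity in (H2)(ii) holds on $\dom(B_0)\supseteq\dom(A_0^2)$.
\item[(vi)] We have $A_0^2-B_0^2=\gamma B_0^2$, so $A=A_0^2-B_0^2+W_0$. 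Moreover $W_0\in\cB(\dom(B_0),\bX)$, since $B_0|_{\dom(B_0)}$ is bounded from the graph norm and $E_0,E_1\in\cB(\bX)$.
\end{itemize}

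\emph{Case (H3).} We keep $A_0$, $B_0$, $B_1$ and $\widetilde B_1$, and define
\[
\widehat W_0=B_0^2+E_2B_0^2+E_1B_0+E_0+W_0,
\]
so that $A=A_0^2-B_0^2+\widehat W_0$ on $\dom(A_0^2)$. Items (i), (iii), (iv), (v) and the domain statements in (vi) are direct restatements of (H3).
\begin{itemize}
\item[Boundedness of $\widehat W_0$.] The operator $B_0^2$ is closed, because $B_0$ generates a group and so $\rho(B_0)\ne\emptyset$. Combined with $\dom(A_0)\subseteq\dom(B_0^2)$ and the closed graph theorem, this gives $B_0^2|_{\dom(A_0)}\in\cB(\dom(A_0),\bX)$. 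Similarly $B_0|_{\dom(A_0)}$ is bounded from $\dom(A_0)$ to $\bX$. Hence $\widehat W_0\in\cB(\dom(A_0),\bX)$.
\item[(ii), first part.] We have $\dom(A_0)\subseteq\dom(B_0^2)\subseteq\dom(B_0)$.
\item[(ii), second part.] Fix $x\in\dom(A_0^2)$ and $\mu\in\rho(A_0)$. Commutation means that $R(\mu,A_0)$ commutes with $B_0$ on $\dom(B_0)$. We have $(\mu-A_0)x\in\dom(A_0)\subseteq\dom(B_0)$, so
\[
B_0x=B_0R(\mu,A_0)(\mu-A_0)x=R(\mu,A_0)B_0(\mu-A_0)x\in\dom(A_0).
\]
\end{itemize}

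The only step needing genuine care is this last one, the inclusion $B_0(\dom(A_0^2))\subseteq\dom(A_0)$ in (H3). It depends on reading $A_0B_0=B_0A_0$ as commutation of resolvents, the same interpretation used in Lemma~\ref{l2.1}. If instead one reads it only as an identity on a common domain, the first thing to try is deriving resolvent commutation from it on $\dom(A_0^2)$, which is a core.
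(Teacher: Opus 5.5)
Your proposal is correct and follows the paper's proof essentially step for step. You make the same choices: $B_0=0$ in (H1); $A_0=\sqrt{1+\gamma}\,B_0$ with $W_0=E_1B_0+E_0$ in (H2); and in (H3) the same absorbed perturbation $(E_2+I)B_0^2+E_1B_0+E_0+W_0$ together with the same resolvent-commutation argument $B_0x=R(\mu,A_0)B_0(\mu-A_0)x$ for $B_0(\dom(A_0^2))\subseteq\dom(A_0)$. The only cosmetic difference is that you justify closedness of $B_0^2$ via $\rho(B_0)\neq\emptyset$, whereas the paper cites that $B_0^2$ generates an analytic semigroup; either suffices.
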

\begin{proof}
	First, we assume Hypothesis (H1) and set $B_0 = 0$. Then one can readily check that Hypothesis (HAB)(i), (ii), (iii), (v) and (vi) hold true. Moreover, since $B_1 = B$ and $B_1\big|_{\dom(A)} \in \mathcal{B}(\dom(A))$,
	we see that Hypothesis (HAB)(iv) holds true.
	
	Next, we assume Hypothesis (H2) and set $A_0 = \sqrt{1+\gamma}\, B_0$.  We note that
	$\sqrt{1+\gamma} \in \mathbb{R}$ since $\gamma > -1$, which implies that $A_0$ generates a
	$C^0$-group on $\bX$. Clearly, Hypothesis (HAB)(i) holds. One can readily check that
	$\dom(A_0) = \dom(B_0)$, $\dom(A_0^2) = \dom(B_0^2)$, hence Hypothesis (HAB)(ii) holds true.
	In this case $A_0^\pm = (\pm\sqrt{1+\gamma} - 1)\,B_0$, thus Hypothesis (HAB)(iii) holds true.
	Conditions (iv) and (v) of Hypothesis (HAB) are part of Hypothesis (H2) as well.
	Setting $W_0: \dom(A_0) \to \bX$, $W_0 = E_1 B_0 + E_0$, we infer that
	$W_0 \in \mathcal{B}(\dom(A_0), \bX)$. Finally, $A_0^2 - B_0^2 = \gamma B_0^2$,
	which shows that Hypothesis (HAB)(vi) holds true.
	
	Finally, we assume Hypothesis (H3). Conditions (i), (iii), (iv), (v) of (HAB) are included in Hypothesis (H3).
	Fix $\mu\in\rho(A_0)$. Since $A_0$ and $B_0$ commute, 
	$B_0 x=(\mu-A_0)^{-1}B_0(\mu-A_0)x\in\dom(A_0)$
	for any $x\in\dom(A_0^2)$, proving $B_0(\dom(A_0^2))\subseteq\dom(A_0)$. Since $\dom(A_0) \subseteq \dom(B_0^2) \subseteq \dom(B_0)$,
	we infer that Hypothesis (HAB)(ii) holds true. 
	
	We note that
	\begin{equation}\label{2.4.1}
		B_0^2 x=B_0^2(\mu I-A_0)^{-1}(\mu I-A_0)x,\; B_0 x=B_0(\mu I-A_0)^{-1}(\mu I-A_0)x
	\end{equation}
	for any $x\in\dom(A_0)$, $\mu\in\rho(A_0)$. Since $B_0$ is a $C_0$-group generator, $B_0^2$ generates an analytic semigroup, so $B_0,B_0^2$
	are closed linear operators. Moreover, using  $\dom(A_0)\subseteq\dom(B_0^2)\subseteq\dom(B_0)$ it follows that 
	$B_0 R(\mu,A_0),B_0^2 R(\mu,A_0)\in\mathcal{B}(\bX)$ for any $\mu\in\rho(A_0)$. Clearly $\rho(A_0)\ne\emptyset$ since $A_0$ generates a $C_0$-group. Since $(\mu I-A_0)\big|_{\dom(A_0)}\in\mathcal{B}(\dom(A_0),\bX)$
	we conclude that
	\begin{equation}\label{bound}
		B_0\big|_{\dom(A_0)}, B_0^2\big|_{\dom(A_0)}\in \mathcal{B}(\dom(A_0), \bX).
	\end{equation}
	We define
	$\widetilde{W}_0: \dom(A_0) \to \bX$ by
	$
	\widetilde{W}_0 = (E_2 + I)\,B_0^2 + E_1\,B_0 + E_0 + W_0$. Since $W_0 \in \mathcal{B}(\dom(A_0), \bX)$ from \eqref{bound}
	we obtain $\widetilde{W}_0 \in \mathcal{B}(\dom(A_0), \bX)$. Moreover,
	\begin{equation}\label{rep-A}
		A_0^2 - B_0^2 + \widetilde{W}_0 = A_0^2 + W_0 = A,
	\end{equation}
	which proves that Hypothesis (HAB)(vi) holds true.
\end{proof}

\begin{example}\label{e8.1}
	We consider the initial value problem
	\begin{equation}\label{eq:1}
		\begin{cases}
			\partial_t^2 u + 2\,b(\xi)\,\partial_t u = \gamma\,\Delta_\xi u + \vec{a}(\xi)\cdot\nabla_\xi u + a_0(\xi)\,u,\\
			u(\xi,0) = f(\xi),\; u_t(\xi,0) = g(\xi).
		\end{cases}
	\end{equation}
	Here $\gamma > 0$, $\vec{a} \in \cC_\rmb(\R^k,\C^k)$, $a_0 \in \cC_\rmb(\R^k,\C)$,
	$b \in \cC^1_\rmb(\R^k,\C)$, $k\in\NN$. System \eqref{eq:1} is a special case of \eqref{AB} that satisfies hypothesis (H1). Indeed, here $A: H^2(\R^k) \to L^2(\R^k)$,
	$B: L^2(\R^k) \to L^2(\R^k)$,
	$A = \gamma\,\Delta_\xi + \vec{a}(\cdot)\cdot\nabla_\xi + a_0(\cdot)$ and $B=M_{b(\cdot)}$ the operator of
	multiplication by $b(\cdot)$. One can readily check that $A = A_0^2 + W_0$, where
	$A_0, W_0: H^1(\R^k) \to L^2(\R^k)$ are defined by
	$A_0 = i\sqrt{\gamma}\,(-\Delta_\xi)^{1/2}$, $W_0 = \vec{a}(\cdot)\cdot\nabla_\xi + a_0(\cdot)$. Since $\Delta$ is a self-adjoint, negative definite operator, we infer that $(-\Delta_\xi)^{1/2}$
	is a self-adjoint, positive definite operator, thus $A_0$ generates a $C^0$-group on
	$L^2(\R^k)$. Since $\vec{a} \in \mathcal{C}_\rmb(\R^k,\C^k)$, $a_0 \in \mathcal{C}_\rmb(\R^k,\C)$ we
	immediately conclude that $W_0 \in \mathcal{B}(H^1(\R^k), L^2(\R^k))$. Moreover, since
	$b \in \mathcal{C}^1_b(\R^k,\C)$, we obtain that $B\big|_{H^1(\R^k)} \in \mathcal{B}(H^1(\R^k))$,
	$B \in \mathcal{B}(L^2(\R^k))$, hence hypothesis (H1) is satisfied. 
\end{example}
Models analogous to \eqref{eq:1}, corresponding to the case when the domain $\R^k$ is replaced
by a bounded domain, are also very common in the literature; see, e.g., \cite{BH,CST1, CPSST,CZ,GH,KLT,OZP}
or even when $\RR^k$ is replaced by a Riemannian
manifold; see, e.g., \cite{AL,L}. Cases when the linear operator $A$ is a fourth-order operator are discussed in \cite{Soba}.

Model \eqref{eq:1} can be considered on $L^p(\R^k)$, $p \in (1,\infty)\setminus\{2\}$,
only in the case when $k=1$. The main reason is that the linear operator $i(-\Delta)^{1/2}$,
defined via Fourier multipliers, does not generate a $C^0$-group on $L^p(\R^k)$ unless $p=2$ or $k=1$; see, e.g.,
\cite{Sogge,Stein,Taylor}.

\begin{example}\label{e8.2}
	Consider again \eqref{eq:1} in the case $k=1$. In the case when $f \in W^{1,p}(\R)$,
	$g \in L^p(\R)$ we can model the system on $L^p(\R)$ as follows. We define
	$A: W^{2,p}(\R) \subseteq L^p(\R) \to L^p(\R)$ by
	$A = \gamma\,\partial_\xi^2 + \vec{a}(\cdot)\,\partial_\xi + a_0(\cdot)$ and $B$ be the operator
	of multiplication by $b(\cdot)$ on $L^p(\R)$. Then $A = A_0 ^2+ W_0$ where
	$A_0, W_0: W^{1,p}(\R) \to L^p(\R)$ are defined by $A_0 = \sqrt{\gamma}\,\partial_\xi$ and
	$W_0 = \vec{a}(\cdot)\,\partial_\xi + a_0(\cdot)$. The operator $A_0$ generates on $L^p(\R)$ the $C^0$-group $\{T_0(t)\}_{t\in\R}$ given by
	$T_0(t)f = f(\cdot + \sqrt{\gamma}\,t)$. Since $\vec{a}(\cdot),\,a_0(\cdot) \in \mathcal{C}_\rmb(\R)$
	and $b \in \mathcal{C}_\rmb^1(\R)$ we have $W_0 \in \mathcal{B}(W^{1,p}(\R), L^p(\R))$,
	$B\big|_{W^{1,p}(\R)} \in \mathcal{B}(W^{1,p}(\R))$ and $B \in \mathcal{B}(L^p(\R))$. Hence Hypothesis (H1) is satisfied. 	
\end{example}	
Examples~\ref{e8.1} and ~\ref{e8.2} are special cases of \eqref{AB} with $B$ being bounded. We now turn our attention to the case when $B$ is unbounded and start with the following example where $B$ is an order one differential operator.	
\begin{example}\label{l8.3}
	We consider the linear model
	\begin{equation}\label{8.3.0}
		\begin{cases}
			\partial_t^2 u + 2\,\vec{b}_0 \cdot \nabla_\xi\,\partial_t u + 2\,b(\xi)\,u_t
			= \gamma\,\Delta_\xi + \vec{a}(\xi)\cdot\nabla u + a_0(\xi)\,u,\\
			u(\xi,0) = f(\xi),\;
			u_t(\xi,0) = g(\xi).
		\end{cases}
	\end{equation}
	Here $\gamma > 0$, $\vec{b}_0 \in \R^k$, $\vec{a} \in \cC_\rmb(\R^k,\C^k)$,
	$a_0 \in \cC_\rmb(\R^k,\C)$, $b \in \cC^1_\rmb(\R^k,\C^k)$, $f \in H^1(\R^k)$, $g \in L^2(\R^k)$.	We can rewrite \eqref{8.3.0} in the form \eqref{AB}, where $A:H^2(\RR^k)\to L^2(\R^k)$ and $B:\{u \in L^2(\R^k):\, \vec{b}_0\cdot\nabla u \in L^2(\R^k)\}\to L^2(\RR^k)$ defined by
	\begin{equation}\label{8.3.1}
		A=\gamma\,\Delta_\xi + \vec{a}(\cdot)\cdot\nabla_\xi u + a_0(\cdot),\;B=\vec{b}_0\cdot\nabla_\xi+b(\cdot).
	\end{equation}
	We now show that this model satisfies hypothesis (HAB). We define $A_0, W_0: H^1(\R^k) \subseteq L^2(\R^k) \to L^2(\R^k)$,
	$B_0: \{u \in L^2(\R^k):\, \vec{b}_0\cdot\nabla_\xi u \in L^2(\R^k)\} \subseteq L^2(\R^k) \to L^2(\R^k)$,
	$B_1: L^2(\R^k) \to L^2(\R^k)$ by
	\begin{equation}\label{8.3.2}
		A_0 = \mathcal{F}^{-1} M_{if_0} \mathcal{F},\;W_0 = \vec{a}(\cdot)\cdot\nabla u + a_0(\cdot),B_0 = \vec{b}_0\cdot\nabla_\xi u,\; B_1 = M_{b(\cdot)},
	\end{equation}
	where $f_0:\R^k\to\R$ is the function defined by $f_0(\eta) = \big((\vec{b}_0\cdot\eta)^2 + \gamma\,|\eta|^2\big)^{1/2}$.
	Taking Fourier transform, we can immediately see that $B_0 = \mathcal{F}^{-1} M_{g_0} \mathcal{F}$,
	where $g_0:\R^k \to \R$ is defined by $g_0(\eta) = \vec{b}_0\cdot\eta$. Since $f_0, g_0$ are real-valued functions,
	one can readily check that $A_0, B_0$, $A_0^\pm = \pm A_0 - B_0$ generate $C^0$-groups on
	$L^2(\R^k)$. Moreover, since
	\begin{equation}\label{8.3.3}
		|g_0(\eta)| \leq |\vec{b}_0|\,|\eta| \leq \frac{|\vec{b}_0|}{\sqrt{\gamma}}\,f_0(\eta)
		\;\text{for any } \eta\in\R,
	\end{equation}
	we infer that $\dom(A_0) \subseteq \dom(B_0)$, $B_0(\dom(A_0^2)) \subseteq \dom(A_0)$.
	Clearly $A_0$ and $B_0$ commute and
	\begin{equation}\label{8.3.4}
		A_0^2-B_0^2 = \mathcal{F}^{-1} M_{-f_0^2 + g_0^2} \mathcal{F}
		= \mathcal{F}^{-1} M_{-\gamma|\cdot|^2} \mathcal{F}
		= \gamma\,\Delta_\xi,
	\end{equation}
	hence $A = A_0^2-B_0^2+ W_0$. Defining $\widetilde{B}_1$ to be the operator of multiplication
	by $\vec{b}_0\cdot\nabla_\xi b(\cdot)$ we have $B_0 B_1 u - B_1 B_0 u = \widetilde{B}_1 u$ for any
	$u \in H^2(\R^k) = \dom(A_0^2)$. Since $\vec{a} \in \cC_\rmb(\RR^k,\CC^k)$, $a_0 \in \cC_\rmb(\RR^k,\CC)$, $b \in \cC^1_\rmb (\RR^k,\CC)$,
	we obtain $W_0 \in \mathcal{B}(H^1(\R^k), L^2(\R^k))$,
	$B_1, \widetilde{B}_1 \in \mathcal{B}(L^2(\R^k))$,
	$(B_1)\big|_{H^1(\R^k)} \in \mathcal{B}(H^1(\R^k))$. Hence,  Hypothesis (HAB) is
	satisfied. 
\end{example}	
We note that in the model \eqref{8.3.0} above we can replace the terms $\vec{b}_0\cdot\nabla_\xi u_t$
and $\gamma\,\Delta_\xi$, respectively, by two Fourier multipliers
$\mathcal{F}^{-1} M_{ig_0} \mathcal{F}$ and $\mathcal{F}^{-1} M_{-f_0^2} \mathcal{F}$,
where $f_0, g_0: \R^k \to \R$ are locally integrable functions  
such that $|g_0| \leq \tilde{c}\,|f_0|$ for some $\tilde{c} > 0$.	
\begin{example}\label{e8.4}
	An example analogous to \eqref{8.3.0} is its 1-dimensional version,
	\begin{equation}\label{eq:11}
		\begin{cases}
			\partial_t^2 u + 2\,\partial_t\partial_\xi u + 2\,b(\xi)\,\partial_t u
			= \gamma\,\partial_\xi^2 u + a_1(x)\,\partial_\xi u + a_0(\xi)\,u,\\
			u(\xi,0) = f(\xi),\; u_t(\xi,0) = g(\xi).
		\end{cases}
	\end{equation}	
	Here $a_1, a_0 \in \cC_\rmb(\RR,\CC)$, $b \in \cC^1_\rmb(\RR,\CC)$, $\gamma > -1$,
	$f \in W^{1,p}(\R)$, $g \in L^p(\R)$. This model is a special case of \eqref{AB} that satisfies hypothesis (H2). Let $A: W^{2,p}(\R) \subseteq L^p(\R) \to L^p(\R)$,
	$B, B_0: W^{1,p} \to L^p(\R)$, $\widetilde{B}_1, E_0, E_1: L^p(\R) \to L^p(\R)$ by
	\begin{equation}\label{8.4.1}
		A = \gamma\,\partial_\xi^2 + a_1(\cdot)\,\partial_\xi + a_0(\cdot),\;
		B = \partial_\xi + b(\cdot),\;
		B_0 = \partial_\xi.
	\end{equation}
	$\widetilde{B}_1, E_0, E_1$ are the operators of multiplication by $b'$, $a_0$, $a_1$,
	respectively. Since $B_0$ generates the translation group on $L^p(\R)$, we immediately see
	that the conditions of hypothesis (H2) hold true.
	We note that this 1-dimensional model allows for the parameter $\gamma$ to be negative.	
\end{example}

\begin{example}\label{e8.5}
	Another example that satisfies Hypothesis (HAB) is the linear model
	\begin{equation}\label{8.5.0}
		\begin{cases}
			\partial_t^2 u + 2\,\vec{b}_0\cdot\nabla_\xi\,\partial_t u + 2\,b(\xi)\,\partial_t u
			= -\gamma\,\Delta_\xi^2 u + \nabla_\xi\cdot(\mathbf{a}(\xi)\nabla u) + \vec{a}(\xi)\cdot\nabla u
			+ a_0(\xi)\,u,\\
			u(\xi,0) = f(\xi),\;
			u_t(\xi,0) = g(\xi).
		\end{cases}
	\end{equation}
	Here $\gamma > 0$, $\vec{b}_0 \in \R^k$, $\mathbf{a} \in \cC_\rmb(\R^k\C^{k\times k})$,
	$\vec{a} \in \cC_\rmb(\R^k,\C^k)$, $a_0 \in \cC_\rmb(\R^k,\C)$, $b \in \cC^2_\rmb(\R^k,\C)$,
	$f\in H^2(\R^k)$, $g \in L^2(\R^k)$.
	The model \eqref{8.5.0} fits the general framework \eqref{AB}. In this specific case $A: H^4(\R^k) \subseteq L^2(\R^k) \to L^2(\R^k)$,
	$B: \{u \in L^2(\R^k):\, \vec{b}_0\cdot\nabla_\xi u \in L^2(\R^k)\}\subseteq L^2(\R^k) \to L^2(\R^k)$, are defined by 
	\begin{equation}\label{8.5.1}
		A= -\gamma\,\Delta_\xi^2 + \nabla_\xi\cdot(\mathbf{a}(\cdot)\nabla_\xi) + \vec{a}(\cdot)\cdot\nabla_\xi + a_0(\cdot),\;B=\vec{b}_0\cdot\nabla_\xi+b(\cdot).
	\end{equation}
	Next, we introduce
	$A_0, W_0: H^2(\R^k)\subseteq L^2(\R^k)\to L^2(\R^k)$, $B_0: \{u \in L^2(\R^k):\, \vec{b}_0\cdot\nabla_\xi u \in L^2(\R^k)\}\subseteq L^2(\R^k) \to L^2(\R^k)$ and $B_1: L^2(\RR^k)\to L^2(\RR^k)$ by
	\begin{equation}\label{8.5.2}
		A_0= i\sqrt{\gamma}\,\Delta_\xi,\;W_0 = \nabla_\xi\cdot(\mathbf{a}(\cdot)\nabla_\xi) + \vec{a}(\cdot)\cdot\nabla_\xi + a_0(\cdot),\;B_0 = \vec{b}_0\cdot\nabla_\xi,\; B_1 = \vec{b}_0\cdot\nabla_\xi b(\cdot)
	\end{equation}
	Taking Fourier transform we obtain that
	$A_0 = \mathcal{F}^{-1} M_{if_0} \mathcal{F}$,
	$B_0 = \mathcal{F}^{-1} M_{ig_0} \mathcal{F}$,
	where $f_0, g_0: \R^k \to \R$ are defined by
	\begin{equation}\label{8.5.3}
		f_0(\eta) = \sqrt{\gamma}\,|\eta|^2,\qquad g_0(\eta) = \vec{b}_0\cdot\eta.
	\end{equation}
	It follows that $A_0$ and $B_0$ commute. Moreover, since $f_0$ and $g_0$ are real-valued
	we immediately infer that $A_0, B_0, A_0^\pm:= \pm A_0 - B_0$ generate $C^0$-groups.
	One can readily check that
	$\dom(A_0) \subseteq \dom(B_0^2)$. The conditions
	$(B_1)\big|_{H^2(\R^k)} \in \mathcal{B}(H^2(\R^k))$
	and $B_1 \in \mathcal{B}(L^2(\R^k))$ follow from the assumption
	$b \in \cC^2_\rmb(\R^k,\C)$. Finally, $B_1 \in \mathcal{B}(L^2(\R^k))$ and
	by setting $\widetilde{B}_1$ to be the operator of multiplication
	by $\vec{b}_0\cdot\nabla_\xi b(\cdot)$, $B_0 B_1 u - B_1 B_0 u = \widetilde{B}_1 u$ for any $u \in H^2(\R^k)$
	arguing as in Example~\ref{l8.3}.
\end{example} 
\begin{example}\label{e8.6}
	In \cite{MS} the authors study a damped fractional Klein--Gordon
	equation that is a special case of \eqref{AB}. The system reads as follows:
	\begin{equation}\label{8.6.0}
		\begin{cases}
			\partial_t^2 u + b(\xi)\,\partial_t u = -(-\partial_\xi^2)^\alpha u - m\,u,\\
			u(\xi,0) = f(\xi),\; 
			u_t(\xi,0) = g(\xi), 
		\end{cases}
	\end{equation}
	Here $\alpha \in (0,1]$, $m > 0$, $f \in H^{2\alpha}(\R)$,
	$g \in H^\alpha(\R)$. The linear operators $A:H^{2\alpha}(\RR)\subseteq L^2(\RR)\to L^2(\RR)$ are defined by $A=-(-\partial_\xi^2)^\alpha$ and $M=M_{b(\cdot)}$, the operator of multiplication by $b\in\cC_\rmb(\RR)$.  System \eqref{8.6.0} satisfies hypothesis (H1). Indeed, one can readily check that $A_0:H^{\alpha}(\RR)\subseteq L^2(\RR)\to L^2(\RR)$ defined by $A_0 = i(-\partial_\xi^2)^{\alpha/2}$is skew-adjoint on $L^2(\RR)$, hence it is a group generator on $L^2(\R)$. Since $A=A_0^2$ we immediately see that hypothesis (H1) holds in this case.
\end{example}
Next, we look at two examples of coupled systems of damped wave equations.
\begin{example}\label{e8.7}
	Consider the system
	\begin{equation}\label{8.7.0}
		\begin{cases}
			\partial_t^2 u + 2\beta_1\,\partial_\xi\,\partial_t v
			= \gamma\,\partial_\xi^2 u + a_{11}(\xi)\,\partial_\xi u + a_{12}(\xi)\,\partial_\xi v
			+ b_{11}(\xi)\,u + b_{12}(\xi)\,v,\\
			\partial_t^2 v + 2\beta_2\,\partial_\xi\,\partial_t u
			= \gamma\,\partial_\xi^2 v + a_{21}(\xi)\,\partial_\xi u + a_{22}(\xi)\,\partial_\xi v
			+ b_{21}(\xi)\,u + b_{22}(\xi)\,v,\\
			u(\xi,0) = f_1(\xi),\; v(\xi,0) = f_2(\xi),
			u_t(\xi,0) = g_1(\xi),\; v_t(\xi,0) = g_2(\xi).
		\end{cases}
	\end{equation}
	Here $\gamma > 0$, $\beta_1, \beta_2 \in \R$, $\beta_1\beta_2 > 0$,
	$a_{jk}(\cdot), b_{jk}(\cdot) \in \cC_\rmb(\RR,\CC)$,
	$f_1, f_2 \in H^1(\R)$, $g_1, g_2 \in L^2(\R)$.
	The system \eqref{8.6.0} could be written as
	\begin{equation}\label{8.7.1}
		\partial_t^2\begin{pmatrix}u\\v\end{pmatrix}
		+ 2\begin{bmatrix}0 & \beta_1\partial_\xi\\\beta_2\partial_\xi & 0\end{bmatrix}
		\partial_t\begin{pmatrix}u\\v\end{pmatrix}
		= \begin{bmatrix}\gamma\partial_\xi^2 & 0\\ 0 & \gamma\partial_\xi^2\end{bmatrix}
		\begin{pmatrix}u\\v\end{pmatrix}
		+ \mathbf{a}(\xi)\,\partial_\xi\begin{pmatrix}u\\v\end{pmatrix}
		+ \mathbf{b}(\xi)\begin{pmatrix}u\\v\end{pmatrix},
	\end{equation}
	where $\mathbf{a} = (a_{jk}),
	\mathbf{b} = (b_{jk}) \in \mathcal{C}_\rmb(\R,\C^{2\times2})$.
	Equation \eqref{8.7.1} is a special case of \eqref{AB}.
	Here $A: H^2(\R,\C^2) \subseteq L^2(\R,\C^2) \to L^2(\R,\C^2)$ and $B: H^1(\R,\C^2) \subseteq L^2(\R,\C^2) \to L^2(\R,\C^2)$,
	are defined by
	\begin{equation}\label{8.7.2}
		B = \begin{bmatrix}0 & \beta_1\partial_\xi\\\beta_2\partial_\xi & 0\end{bmatrix},\qquad
		A = \begin{bmatrix}\gamma\partial_\xi^2 & 0\\ 0 & \gamma\partial_\xi^2\end{bmatrix}
		+ \mathbf{a}(\cdot)\,\partial_\xi + \mathbf{b}(\cdot).
	\end{equation}
	Set $B_0=B$ and let $A_0, W_0: H^1(\R,\C^2) \subseteq L^2(\R,\C^2) \to L^2(\R,\C^2)$ be the linear operators defined by
	\begin{equation}\label{8.7.3}
		A_0 = \sqrt{\gamma+\beta_1\beta_2}\,\partial_\xi I_2,\;
		W_0 = \mathbf{a}(\cdot)\,\partial_\xi+ \mathbf{b}(\cdot).
	\end{equation}
	Taking Fourier transform we obtain 
	\begin{equation}\label{8.7.4}
		A_0 = \mathcal{F}^{-1} M_{\Gamma_0} \mathcal{F}\quad\text{and}\quad
		B_0 = \mathcal{F}^{-1} M_{\Lambda_0} \mathcal{F},
	\end{equation}
	where $\Gamma_0, \Lambda_0: \R \to \C^2$ are the matrix-valued functions defined by
	\begin{equation}\label{8.7.5}
		\Gamma_0(\eta) = \begin{bmatrix}i\sqrt{\gamma+\beta_1\beta_2}\,\eta & 0\\ 0 & i\sqrt{\gamma+\beta_1\beta_2}\,\eta\end{bmatrix},\qquad
		\Lambda_0(\xi) = \begin{bmatrix}0 & i\beta_1\eta\\ i\beta_2\eta & 0\end{bmatrix}.
	\end{equation}
	We note that $\Gamma_0(\eta)$ and $\Lambda_0(\eta)$ are skew-adjoint for any $\eta\in\R$,
	which implies that $M_{\Gamma_0}$, $M_{\Lambda_0}$ are skew-adjoint operators on
	$L^2(\R,\C^2)$. We infer that $A_0, B_0, A_0^\pm := \pm A_0 - B_0$ generate
	$C_0$-groups on $L^2(\R,\C^2)$. Moreover, since
	$\Gamma_0(\eta)\Lambda_0(\eta) = \Lambda_0(\eta)\Gamma_0(\eta)$ for any $\eta\in\R$,
	we have $A_0 B_0 = B_0 A_0$.
	In addition, one can readily check that $\dom(A_0) = \dom(B_0)$,
	$B_0(\dom(A_0^2)) = B_0(H^2(\R,\C^2)) \subseteq H^1(\R,\C^2) = \dom(A_0)$.
	We compute
	\begin{equation}\label{8.7.6}
		\bigl(\Gamma_0(\eta) + \Lambda_0(\eta)\bigr)\bigl(\Gamma_0(\eta) - \Lambda_0(\eta)\bigr)
		= -\gamma\,I_2\,\eta^2\;\mbox{for any}\;\eta\in\RR
	\end{equation}
	which implies that
	\begin{equation}\label{8.7.8}
		A_0^2-B_0^2 = -\gamma\,I_2\,\partial_\xi^2.
	\end{equation}
	From \eqref{8.7.8} we infer that $A = A_0^2-B_0^2+ W_0$, which shows that Hypothesis (HAB)
	is satisfied.
\end{example}

\begin{example}\label{e8.8}
	Another example of a linear model that satisfies Hypothesis (HAB) is given by the coupled
	system with strong damping
	\begin{equation}\label{8.8.0}
		\begin{cases}
			\begin{aligned}&\partial_t^2 u + 2\beta_1\,\Delta_\xi\,\partial_t v
				= -\gamma\,\Delta_\xi^2 u + \nabla_\xi\cdot(\mathbf{a}_{11}(\xi)\nabla_\xi u)
				+ \nabla_\xi\cdot(\mathbf{a}_{12}(\xi)\nabla_\xi v)
				+ \vec{b}_{11}(\xi)\cdot\nabla_\xi u + \vec{b}_{12}(\xi)\cdot\nabla_\xi v\\
				&\qquad\qquad\qquad\qquad\qquad+ c_{11}(\xi)\,u + c_{12}(\xi)\,v,\\
				&\partial_t^2 v + 2\beta_2\,\Delta_\xi\,\partial_t u
				= -\gamma\,\Delta_\xi^2 u + \nabla_\xi\cdot(\mathbf{a}_{21}(\xi)\nabla_\xi u)
				+ \nabla_\xi\cdot(\mathbf{a}_{22}(\xi)\nabla_\xi v)
				+ \vec{b}_{21}(\xi)\cdot\nabla_\xi u + \vec{b}_{22}(\xi)\cdot\nabla_\xi v\\
				&\qquad\qquad\qquad\qquad\qquad+ c_{21}(\xi)\,u + c_{22}(\xi)\,v,\\
				&u(\xi,0) = f_1(\xi),\; v(\xi,0) = f_2(\xi),\;
				u_t(\xi,0) = g_1(\xi),\; v_t(\xi,0) = g_1(\xi).
			\end{aligned}
		\end{cases}
	\end{equation}
	Here $\gamma > 0$, $\beta_1,\beta_2\in\R$, $\beta_1\beta_2 < 0$,
	$\mathbf{a}_{jk}(\cdot) \in \cC_\rmb(\R,\C^{k\times k})$,
	$\vec{b}_{jk}(\cdot) \in \cC_\rmb(\R,\C^k)$, $c_{jk}(\cdot) \in \cC_\rmb(\RR,\CC)$
	for $j,k = 1,2$. $f_1, f_2 \in H^2(\R^k)$, $g_1, g_2 \in L^2(\R^k)$.
	The system \eqref{8.8.0} can be written as
	\begin{equation}\label{8.8.1}
		\partial_t^2\begin{pmatrix}u\\v\end{pmatrix}
		+ 2\begin{bmatrix}0 & \beta_1\Delta_\xi\\\beta_2\Delta_\xi & 0\end{bmatrix}
		\partial_t\begin{pmatrix}u\\v\end{pmatrix}
		= -\gamma I_2\,\Delta_\xi^2\begin{pmatrix}u\\v\end{pmatrix}
		+ W_0\begin{pmatrix}u\\v\end{pmatrix},
	\end{equation}
	where
	\begin{equation}\label{8.8.2}
		W_0\begin{pmatrix}u\\v\end{pmatrix}
		= \begin{pmatrix}
			\nabla_\xi\cdot(\mathbf{a}_{11}(\cdot)\nabla_\xi u) + \nabla_\xi\cdot(\mathbf{a}_{12}(\cdot)\nabla_\xi v)
			+ \vec{b}_{11}(\cdot)\cdot\nabla_\xi u + \vec{b}_{12}(\cdot)\cdot\nabla_\xi v\\
			\nabla_\xi\cdot(\mathbf{a}_{21}(\cdot)\nabla_\xi u) + \nabla_\xi\cdot(\mathbf{a}_{22}(\cdot)\nabla_\xi v)
			+ \vec{b}_{21}(\cdot)\cdot\nabla_\xi u + \vec{b}_{22}(\cdot)\cdot\nabla_\xi v
		\end{pmatrix}
		+ \begin{bmatrix}c_{11}(\cdot) & c_{12}(\cdot)\\ c_{21}(\cdot) & c_{22}(\cdot)\end{bmatrix}
		\begin{pmatrix}u\\v\end{pmatrix}.
	\end{equation}
	Here $W_0$ can be considered as a bounded linear operator from $H^2(\R,\C^2)$ to $L^2(\R,\C^2)$. Let $A: H^4(\R^k,\C^2) \subseteq L^2(\R^k,\C^2) \to L^2(\R^k,\C^2)$, $B: H^2(\R^k,\C^2) \subseteq L^2(\R^k,\C^2) \to L^2(\R^k,\C^2)$,
	and
	$A_0: H^2(\R^k,\C^2) \to L^2(\R^k,\C^2)$ be the linear operators defined by 
	\begin{equation}\label{8.8.3}
		A = -\gamma\,I_2\,\Delta_\xi^2 + W_0,\;B= \begin{bmatrix}0 & \beta_1\Delta_\xi\\\beta_2\Delta_\xi & 0\end{bmatrix},\;A_0 = i\sqrt{\gamma - \beta_1\beta_2}\,I_2\,\Delta_\xi.
	\end{equation}
	Set $B_0=B$. Since $\Delta_\xi$ is self-adjoint, we infer that $A_0, B_0$ are skew-adjoint, thus
	$A_0, B_0, A_0^\pm = \pm A_0 - B_0$
	generate $C^0$-groups on $L^2(\R^k,\C^2)$. Moreover, one can readily check that $A_0$ and
	$B_0$ commute and $\dom(A_0) = \dom(B_0) = H^2(\R^k,\C^2)$ and
	$B_0(\dom(A_0^2)) = B_0(H^4(\R^k,\C^2)) \subseteq H^2(\R^k,\C^2) = \dom(A_0)$. Since $B_0^2=\beta_1\beta_2\,I_2\,\Delta_\xi^2$, from \eqref{8.8.3} we immediately infer that $A=A_0^2-B_0^2+W_0$
	Hence, hypothesis (HAB) is satisfied.
\end{example}
\begin{example}\label{e8.9} We consider the sixth-order linear model,
	\begin{equation}\label{8.9.0}
		\begin{cases}
			\partial_t^2 u + 2\alpha\,\partial_t\partial_\xi u + 2\beta\,\partial_t\partial_\xi^3 u
			= \gamma\,\partial_\xi^6 u + a_3(\xi)\,\partial_\xi^3 u+a_2(\xi)\,\partial_\xi^2 u+a_1(\xi)\,\partial_\xi u + a_0(\xi)\,u,\\
			u(\xi,0) = f(\xi),\; u_t(\xi,0) = g(\xi).
		\end{cases}
	\end{equation}	
	Here $\gamma>0$, $\alpha,\beta\in\RR$, $(\alpha,\beta)\ne(0,0)$, $a_3,a_2,a_1, a_0 \in \cC_\rmb(\RR,\CC)$, $\gamma > 0$,
	$f \in H^3(\R)$, $g \in L^2(\R)$. This linear model is a special case of \eqref{AB}. In this case $A:H^6(\RR)\subseteq L^2(\RR)\to L^2(\RR)$, $B:H^m(\RR)\subseteq L^2(\RR)\to L^2(\RR)$ are defined by 
	\begin{equation}\label{8.9.1}
		A=\gamma\,\partial_\xi^6 + a_3(\cdot)\,\partial_\xi^3 +a_2(\cdot)\,\partial_\xi^2 +a_1(\cdot)\,\partial_\xi  + a_0(\cdot),\;B=\alpha\,\partial_\xi u+\beta\,\partial_\xi^3,
	\end{equation}	
	where the Sobolev parameter is given by $m=3$ if $\beta\ne0$ and $m=1$ if $\beta=0$. We also introduce the operators $A_0, W_0:H^3(\RR)\subseteq L^2(\RR)\to L^2(\RR)$ defined by 
	
	\begin{equation}\label{8.9.2}
		A_0 = \mathcal{F}^{-1} M_{if_0} \mathcal{F},\;W_0=a_3(\cdot)\,\partial_\xi^3 +a_2(\cdot)\,\partial_\xi^2 +a_1(\cdot)\,\partial_\xi  + a_0(\cdot).
	\end{equation}
	Here $f_0:\R\to\R$ is the function defined by $f_0(\eta) =\sqrt{\gamma+\beta^2\eta^6-2\alpha\beta\eta^4+\alpha^2\eta^2}$. Since $\gamma>0$ we have that $f_0$ is real valued as the quantity under the square root is always non-negative. Also, we set $B_0=B$. Clearly, $A_0$ and $B_0$ commute. Taking Fourier Transform one can readily check that $A_0^\pm=\pm A_0-B_0$ generate $C_0$-groups on $L^2(\RR)$. Clearly, $\dom(A_0)\subseteq\dom(B_0)$ and $B_0(\dom(A_0^2)) \subseteq \dom(A_0)$. Moreover, the linear operator  $W_0$ is bounded. Since $A=A_0^2-B_0^2+W_0$ we conclude that Hypothesis (HAB) is satisfied. We note that \eqref{8.9.0} satisfies Hypothesis (H3), whenever $\beta=0$ and Hypothesis (H2) if $\alpha=0$, $a_1\equiv 0$ and $a_2\equiv0$.
\end{example}
\begin{remark}\label{r8.10}
	Many nonlinear second order PDEs with damping can be written in the form
	\begin{equation}\label{nonlinear-damping}	
		\partial_t^2h+2B_{\mathrm{unp}}\partial_th+F_1(h,\partial_th)\partial_th=A_{\mathrm{unp}} h+F_2(h).	
	\end{equation}	
	Here $F_1\in\cC^2(\RR^2,\CC)$ and $F_2\in\cC^2(\RR,\CC)$, $A_{\mathrm{unp}}$ and $B_{\mathrm{unp}}$ are (unperturbed) differential expressions on $\RR^k$ that can define differential operators on some Banach spaces $\bX_A\hookrightarrow\bX$ and $\bX_B\hookrightarrow\bX$. Here the orders of the operators are assumed to satisfy $\mathrm{ord}(A_{\mathrm{unp}})\geq 2\mathrm{ord}(B_{\mathrm{unp}})$. A \emph{standing wave} solution is a time independent solution $\bar h$ of \eqref{nonlinear-damping}. Hence, $\bar h$ solves the equation 
	\begin{equation}\label{standing-wave}	
		A_{\mathrm{unp}} \bar h+F_2(\bar h)=0.	
	\end{equation}	
	Here $\bar h\in\cC^m(\RR^k,\CC)$, where $m:=\mathrm{ord}(A_{\mathrm{unp}})\geq2$, whenever $B_{\mathrm{unp}}$ defines an unbounded linear operator on $\bX$ with domain $\bX_B$. Linearizing \eqref{nonlinear-damping} along $\bar h$ we obtain the linear model
	\begin{equation}\label{linearized-damping}	
		\partial_t^2u+2B_{\mathrm{unp}}\partial_tu+F_1(\bar h(x),0)\partial_tu=A_{\mathrm{unp}} u+F_2'(\bar h(x))u.	
	\end{equation}	
	This linear model is in the general form \eqref{AB}, where $A:\bX_A\subseteq\bX\to\bX$	and $B:\bX_B\subseteq\bX\to\bX$	are given by
	\begin{equation}\label{linearized-AB}
		A=A_{\mathrm{unp}}+F_2'(\bar h(\cdot)),\;B=B_{\mathrm{unp}}+\tfrac{1}{2}F_1(\bar h(\cdot),0).	
	\end{equation}	
	Since $\bar h\in\cC(\R^k,\CC)$ with $m\geq 2$, we infer that in the case second order equations with damping obtained by linearizing along a localized pattern the operators $A$ and $B$ have \emph{smooth coefficients}. Moreover, in the case of localized patterns, the function $\bar h$ is a bounded functions, hence the coefficients of the differential operators $A$ and $B$ are also bounded functions.
\end{remark}		
	
\appendix
\section{Auxiliary Lemmas}\label{A1}
In this appendix we gather several lemmas needed in this paper. We start with a lemma used in the proof of existence of classical solution of \eqref{AB}.
\begin{lemma}\label{App1}
	Assume $B : \dom(B) \subseteq\bX \to\bX$ is a closed linear operator, $\rho(B) \neq \emptyset$,
	$f \in \mathcal{C}^1(\mathbb{R}, \bX)$, $f(t), f'(t) \in \dom(B)$ for any $t \in \mathbb{R}$, $Bf(\cdot), Bf'(\cdot) \in \mathcal{C}(\mathbb{R},\bX)$.
	Then,
	\begin{equation}\label{App1.1}
		Bf(\cdot) \in \mathcal{C}^1(\mathbb{R},\bX) \quad \text{and} \quad (Bf(\cdot))' = Bf'(\cdot). 
	\end{equation}
\end{lemma}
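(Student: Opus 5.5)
The idea is to exploit $\rho(B)\ne\emptyset$ to convert the unbounded operator $B$ into a bounded one acting on a differentiable function. Fix $\mu\in\rho(B)$ and write $R:=R(\mu,B)\in\mathcal{B}(\bX)$. For every $z\in\dom(B)$ we have $Bz=\mu z-(\mu I-B)z$. The natural object is $g:=(\mu I-B)f(\cdot)=\mu f(\cdot)-Bf(\cdot)$, which lies in $\mathcal{C}(\RR,\bX)$ by hypothesis. I would show $g\in\mathcal{C}^1(\RR,\bX)$ with $g'=\mu f'-Bf'$. Since $\mu f\in\mathcal{C}^1$, this gives $Bf=\mu f-g\in\mathcal{C}^1(\RR,\bX)$ with $(Bf)'=Bf'$, which is the claim \eqref{App1.1}.

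To differentiate $g$, I would avoid difference quotients of $B$ and argue by integration, since closed operators commute with Riemann integrals of continuous functions. Set $h:=\mu f'-Bf'\in\mathcal{C}(\RR,\bX)$, which is continuous by hypothesis. The functions $f'$ and $Bf'$ are both continuous, and $B$ is closed. So, by \cite[Proposition~1.1.7]{ABHN} (the same tool used in the proof of Theorem~\ref{t3.1}), we get
\[
\int_0^t f'(s)\,\rmd s\in\dom(B),\qquad B\int_0^t f'(s)\,\rmd s=\int_0^t Bf'(s)\,\rmd s .
\]
Since $\int_0^t f'(s)\,\rmd s=f(t)-f(0)$ and $f(t),f(0)\in\dom(B)$, this yields
\[
Bf(t)-Bf(0)=\int_0^t Bf'(s)\,\rmd s\quad\text{for any } t\in\RR.
\]
The right-hand side is $\mathcal{C}^1$ in $t$ by the fundamental theorem of calculus, because $Bf'$ is continuous. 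Hence $Bf\in\mathcal{C}^1(\RR,\bX)$ and $(Bf)'=Bf'$.

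This argument does not really use $\rho(B)\ne\emptyset$ or the continuity of $Bf$, which follows anyway from the integral identity. If one wished to avoid citing the closed-operator/integral proposition, one could instead apply the bounded operator $R$: $R$ commutes with integrals, and $RBz=\mu Rz-z$ on $\dom(B)$. This route gives the same identity after applying $R$, and injectivity of $R$ then removes it.

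The only delicate step is justifying that $B$ passes through the integral, which is exactly where closedness of $B$ (or the resolvent trick) enters. Everything else is routine.
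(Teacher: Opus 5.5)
Your proof is correct, but it takes a different route from the paper. The paper fixes $\mu\in\rho(B)$, sets $F(t)=\int_0^t Bf'(s)\,\rmd s$, and notes that $R(\mu,B)Bf(\cdot)=BR(\mu,B)f(\cdot)$ is $\mathcal{C}^1$ with derivative $R(\mu,B)Bf'(\cdot)$, because $BR(\mu,B)\in\mathcal{B}(\bX)$. It deduces that $R(\mu,B)\bigl(Bf(\cdot)-F(\cdot)\bigr)$ is a constant lying in $\Range R(\mu,B)=\dom(B)$. Injectivity of $R(\mu,B)$ then gives $Bf=F+x_1$.

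You instead apply the closed-operator/integral result \cite[Proposition~1.1.7]{ABHN} directly to $f'$ on $[0,t]$. This yields $Bf(t)-Bf(0)=\int_0^t Bf'(s)\,\rmd s$, from which the claim is immediate. Your argument is shorter. As you observe, it also shows that the hypotheses $\rho(B)\neq\emptyset$ and $Bf(\cdot)\in\mathcal{C}(\RR,\bX)$ are superfluous. It is, moreover, exactly the tool the paper itself uses in the uniqueness part of Theorem~\ref{t3.1}.

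The paper's proof avoids that proposition and works only with the bounded operators $R(\mu,B)$ and $BR(\mu,B)$, at the price of requiring a nonempty resolvent set. Your closing remark about applying $R$ to the integral identity is essentially an integrated version of the paper's argument. The opening detour through $g=(\mu I-B)f(\cdot)$ is never used and can be dropped.
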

\begin{proof}
	Fix $\mu \in \rho(B)$. Define $F : \mathbb{R} \to\bX$ by
	$F(t) = \int_0^t Bf'(s)\,\rmd s$.
	Since $Bf'(\cdot) \in \mathcal{C}(\mathbb{R},\bX)$ one can readily check that $F \in \mathcal{C}^1(\mathbb{R},\bX)$ and $F' = Bf'(\cdot)$. Since $R(\mu, B)\,Bf(t) = B\,R(\mu,B)f(t)$ for any $t\in\RR$, $BR(\mu,B) \in \mathcal{B}(\bX)$ and $f \in \mathcal{C}^1(\mathbb{R},\bX)$ we obtain $R(\mu,B)Bf(\cdot) \in \mathcal{C}^1(\mathbb{R},\bX)$
	and
	\begin{equation}\label{App1.2}
		\bigl(R(\mu,B)Bf(\cdot)\bigr)' = BR(\mu,B)f'(\cdot) = R(\mu,B)Bf'(\cdot). 
	\end{equation}
	Since $F, R(\mu,B)Bf(\cdot) \in \mathcal{C}^1(\mathbb{R},\bX)$ and $R(\mu,B) \in \mathcal{B}(\bX)$
	it follows that $R(\mu,B)Bf(\cdot) - R(\mu,B)F(\cdot) \in \mathcal{C}^1(\mathbb{R},\bX)$. Moreover, 
	\begin{equation}\label{App1.3}
		\bigl(R(\mu,B)Bf(\cdot) - R(\mu,B)F(\cdot)\bigr)' = R(\mu,B)Bf'(\cdot) - R(\mu,B)Bf'(\cdot) = 0. 
	\end{equation}
	Hence, there exists $x_0 \in\bX$ such that
	$R(\mu,B)Bf(t) - R(\mu,B)F(t) = x_0$ for any $t \in \mathbb{R}$. This identity proves that $x_0\in\dom(B)$. Therefore, there exists $x_1 \in\bX$ such that
	$x_0 = R(\mu, B)x_1$. We conclude that $Bf(\cdot) = F(\cdot) + x_1$. Since $F\in\mathcal{C}^1(\mathbb{R},\bX)$ we infer that $ Bf(\cdot) \in \mathcal{C}^1(\mathbb{R},\bX)$
	and $(Bf(\cdot))' = F'(\cdot) = Bf'(\cdot)$, proving the lemma.
\end{proof}
\begin{lemma}\label{App2}
	Let $\varphi \in \mathcal{C}^1(\mathbb{R}, \mathbb{C})$ and $\{H(t)\}_{t \in \mathbb{R}}$ be a $C^0$ group with generator $\mathcal{J}$ on a Banach space $\bX$. Then,
	\begin{enumerate}
		\item[(i)] $\displaystyle \int_0^t \varphi(t-s) H(s) x\, \rmd s \in \operatorname{dom}(\mathcal{J})$
		\item[(ii)] $\displaystyle \mathcal{J} \int_0^t \varphi(t-s) H(s) x\, \rmd s = \int_0^t \varphi'(t-s) H(s) x\, \rmd s + \varphi(0) H(t) x - \varphi(t) x$
	\end{enumerate}
	for any $t \in \mathbb{R}$, $x \in \bX$.
\end{lemma}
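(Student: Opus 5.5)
The plan is to reduce the statement to the standard fact that, for a $C_0$-group $\{H(t)\}_{t\in\RR}$ with generator $\mathcal{J}$,
\[
\int_0^r H(s)x\,\rmd s\in\dom(\mathcal{J}),\qquad \mathcal{J}\int_0^r H(s)x\,\rmd s=H(r)x-x
\]
for every $r\in\RR$ and $x\in\bX$ (see, e.g., \cite[Proposition 3.1.9]{ABHN}). For negative $r$ this follows by applying the semigroup version to $\{H(-t)\}_{t\ge0}$, whose generator is $-\mathcal{J}$.

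Fix $t\in\RR$ and $x\in\bX$, and set $\Phi(s)=\int_0^s H(\tau)x\,\rmd\tau$. Since $\Phi$ is $\mathcal{C}^1$ with $\Phi'=H(\cdot)x$, integration by parts gives
\[
\int_0^t\varphi(t-s)H(s)x\,\rmd s=\varphi(0)\Phi(t)+\int_0^t\varphi'(t-s)\Phi(s)\,\rmd s.
\]
Here we used $\Phi(0)=0$ and $\frac{\rmd}{\rmd s}\varphi(t-s)=-\varphi'(t-s)$.

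Next we show the right-hand side lies in $\dom(\mathcal{J})$ and apply $\mathcal{J}$ to it. The first term does so by the standard fact. For the integral term, the function $s\mapsto\varphi'(t-s)\Phi(s)$ is continuous with values in $\dom(\mathcal{J})$, and $s\mapsto\mathcal{J}\bigl(\varphi'(t-s)\Phi(s)\bigr)=\varphi'(t-s)\bigl(H(s)x-x\bigr)$ is continuous in $\bX$. Since $\mathcal{J}$ is closed, \cite[Proposition~1.1.7]{ABHN} lets us move $\mathcal{J}$ inside the Riemann integral. This gives
\[
\mathcal{J}\int_0^t\varphi(t-s)H(s)x\,\rmd s=\varphi(0)\bigl(H(t)x-x\bigr)+\int_0^t\varphi'(t-s)H(s)x\,\rmd s-\int_0^t\varphi'(t-s)\,\rmd s\;x .
\]
Finally, $\int_0^t\varphi'(t-s)\,\rmd s=\varphi(t)-\varphi(0)$, so the two $\varphi(0)x$ terms cancel and what remains is exactly (ii). Assertion (i) is obtained along the way.

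The only delicate point is the interchange of $\mathcal{J}$ with the integral. It requires continuity of $\mathcal{J}\Phi(\cdot)$, which is immediate from the formula $\mathcal{J}\Phi(s)=H(s)x-x$. The case $t<0$ needs no separate treatment beyond the validity of the standard fact for negative arguments, which is where the group assumption is used.
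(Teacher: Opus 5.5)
Your argument is correct, but it takes a different route from the paper.

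The paper first treats $x\in\dom(\mathcal{J})$. There $H(\cdot)x$ is $\mathcal{C}^1$ with derivative $\mathcal{J}H(\cdot)x$, so the paper moves $\mathcal{J}$ inside the integral and integrates by parts, shifting the derivative from $H(s)x$ onto $\varphi(t-s)$. It then passes to arbitrary $x\in\bX$ by approximating with $x_n\in\dom(\mathcal{J})$, using density of the generator's domain, an exponential bound, and closedness of $\mathcal{J}$ a second time.

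You instead integrate by parts one level higher, against the antiderivative $\Phi(s)=\int_0^s H(\tau)x\,\rmd\tau$. This function is $\mathcal{C}^1$ and takes values in $\dom(\mathcal{J})$ for every $x\in\bX$. So you handle all $x$ at once: the standard identity $\mathcal{J}\Phi(r)=H(r)x-x$ and Hille's theorem replace the density-and-limit step. You correctly extend that identity to $r<0$ via the reversed semigroup generated by $-\mathcal{J}$. Your bookkeeping is right: the two $\varphi(0)x$ terms cancel because $\int_0^t\varphi'(t-s)\,\rmd s=\varphi(t)-\varphi(0)$.

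What your route buys is a direct computation with no approximation argument. It also sidesteps small slips in the paper's limiting step: the bound there should be $Me^{\omega|s|}$, and the limit in the final display should involve $\varphi'$ rather than $\varphi$. What the paper's route buys is that it needs only the action of the generator on its domain plus density. It does not invoke the identity for $\mathcal{J}\int_0^r H(s)x\,\rmd s$, though that identity is itself a textbook fact.
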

\begin{proof}
	First, assume that $x \in \operatorname{dom}(\mathcal{J})$. Then,
	$H(\cdot) x \in C^1(\mathbb{R}, \bX) \cap C^0(\mathbb{R}, \operatorname{dom}(\mathcal{J}))$. Since $\mathcal{J}$ is closed, we obtain $\displaystyle \int_0^t \varphi(t-s) H(s) x\, \rmd s \in \operatorname{dom}(\mathcal{J})$ and
	\begin{align}\label{App2.1}
		\mathcal{J} \int_0^t &\varphi(t-s) H(s) x\, \rmd s = \int_0^t \varphi(t-s) \mathcal{J} H(s) x\, \rmd s = \int_0^t \varphi(t-s) (H(s) x)' \, \rmd s \nonumber\\
		&= \varphi(t-s) H(s) x \Big|_{s=0}^{s=t} + \int_0^t \varphi'(t-s) H(s) x\, \rmd s= \varphi(0) H(t) x - \varphi(t) x + \int_0^t \varphi'(t-s) H(s) x\, \rmd s,
	\end{align}
	for any $t \in \mathbb{R}$. Thus, assertions (i) and (ii) hold In the case when $x \in \operatorname{dom}(\mathcal{J})$. Fix $x \in \bX$ and let $\{x_n\}_{n \geq 1}$ be a sequence of elements of $\operatorname{dom}(\mathcal{J})$ such that $x_n \rightarrow x$ as $n\to\infty$. Then,
	\begin{align}\label{App2.2}
		&\left\| \int_0^t \varphi^{(k)}(t-s) H(s) x_n\, \rmd s -\int_0^t \varphi^{(k)}(t-s) H(s) x\, \rmd s \right\| \leq  \int_0^t |\varphi^{(k)}(t-s)| \, \|H(s)(x_n - x)\| \, \rmd s \nonumber\\ &\qquad\qquad\qquad\leq \int_0^t |\varphi^{(k)}(t-s)| e^{\omega s} \, \rmd s \, \|x_n - x\|\;\mbox{for any}\; t\in\RR,\,n\in\NN,\,k=0,1.
	\end{align}
	It follows that
	\begin{equation}\label{App2.3}
		\int_0^t \varphi^{(k)}(t-s) H(s) x_n\, \rmd s \xrightarrow[n \to \infty]{\quad \bX \quad} \int_0^t \varphi^{(k)}(t-s) H(s) x\, \rmd s \;\mbox{for any}\; t\in\RR,\,k=0,1.
	\end{equation}
	From \eqref{App2.3} and since $H(t) \in \mathcal{B}(\bX)$ for any $t \in \mathbb{R}$, we infer that
	\begin{equation}\label{App2.4}
		\int_0^t \varphi(t-s) H(s) x_n\, \rmd s \xrightarrow[n \to \infty]{\quad \bX \quad} \int_0^t \varphi(t-s) H(s) x\, \rmd s \;\mbox{for any}\; t\in\RR;
	\end{equation}
	\begin{equation}\label{App2.5}
		\mathcal{J} \int_0^t \varphi(t-s) H(s) x_n\, \rmd s \xrightarrow[n \to \infty]{\quad \bX \quad} \int_0^t \varphi(t-s) H(s) x\, \rmd s + \varphi(0) H(t) x - \varphi(t) x\;\mbox{for any}\; t\in\RR.
	\end{equation}
	Using again that $\mathcal{J}$ is a closed linear operator, from \eqref{App2.4} and \eqref{App2.5} we infer that (i) and (ii) hold, proving the lemma.
\end{proof}
\begin{lemma}\label{App3}
	Assume $\bX_1, \bX_2$ are Banach spaces, $K: \mathbb{R} \to \mathcal{B}(\bX_1, \bX_2)$ is such that $K(\cdot)\,x_1 \in \mathcal{C}^1(\mathbb{R}, \bX_2)$ for any $x_1 \in \bX_1$. If $g \in \mathcal{C}(\mathbb{R}, \bX_1)$ then
	\begin{equation}\label{K-conv}
		K * g \in \mathcal{C}^1(\mathbb{R}, \bX_2) \;\text{and} \quad (K * g)' = K' * g + K(0)\,g,
	\end{equation}
	where  $K': \mathbb{R} \to \mathcal{B}(\bX_1, \bX_2)$ is the strongly continuous operator-valued function defined by $K'(t)\,x = (K(t)\,x)'$.	
\end{lemma}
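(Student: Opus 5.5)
The plan is to split the convolution at a fixed point and differentiate each piece by hand, using only the hypothesis that $K(\cdot)x_1\in\mathcal{C}^1(\mathbb{R},\bX_2)$ for every $x_1\in\bX_1$ and that $g$ is continuous.

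\textbf{Step 1: uniform boundedness.} For every $x_1$, the map $t\mapsto K'(t)x_1$ is continuous, hence bounded on compact intervals. By the uniform boundedness principle, $K'$ is locally bounded in operator norm, and the same holds for $K$. Consequently $K'(\cdot)$ is strongly continuous from $\mathbb{R}$ into $\mathcal{B}(\bX_1,\bX_2)$ and locally norm-bounded. The standard fact \cite[Proposition 1.3.4]{ABHN} then gives $K'*g\in\mathcal{C}(\mathbb{R},\bX_2)$. Likewise $K(0)g\in\mathcal{C}(\mathbb{R},\bX_2)$, so the right-hand side of \eqref{K-conv} is continuous.

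\textbf{Step 2: fundamental theorem of calculus along trajectories.} For each $x_1$ we have $K(t)x_1=K(0)x_1+\int_0^tK'(r)x_1\,\rmd r$. Substituting this into $(K*g)(t)=\int_0^tK(t-s)g(s)\,\rmd s$ gives
\begin{equation*}
(K*g)(t)=\int_0^tK(0)g(s)\,\rmd s+\int_0^t\!\int_0^{t-s}K'(r)g(s)\,\rmd r\,\rmd s.
\end{equation*}
The integrand $(r,s)\mapsto K'(r)g(s)$ is jointly continuous. This follows from local norm-boundedness together with strong continuity: write $K'(r)g(s)-K'(r_0)g(s_0)=K'(r)(g(s)-g(s_0))+(K'(r)-K'(r_0))g(s_0)$ and bound each term. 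Fubini therefore applies. Substituting $r=\tau-s$ and swapping the order of integration yields
\begin{equation*}
\int_0^t\!\int_0^{\tau}K'(\tau-s)g(s)\,\rmd s\,\rmd\tau=\int_0^t(K'*g)(\tau)\,\rmd\tau.
\end{equation*}
Hence
\begin{equation*}
(K*g)(t)=\int_0^t\bigl(K(0)g(\tau)+(K'*g)(\tau)\bigr)\,\rmd\tau.
\end{equation*}
The integrand is continuous by Step 1, so $K*g\in\mathcal{C}^1(\mathbb{R},\bX_2)$ with the claimed derivative. Negative $t$ is handled by the same computation with oriented integrals.

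\textbf{Main obstacle.} The delicate point is justifying joint continuity, and hence measurability, of $(r,s)\mapsto K'(r)g(s)$ using only strong continuity of $K'$. This is exactly where the uniform boundedness principle is needed, to obtain local norm bounds on $K'$.

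If the Fubini step proves awkward, the fallback is a direct difference quotient:
\begin{equation*}
\tfrac1h\bigl((K*g)(t+h)-(K*g)(t)\bigr)=\tfrac1h\int_t^{t+h}K(t+h-s)g(s)\,\rmd s+\int_0^t\tfrac1h\bigl(K(t+h-s)-K(t-s)\bigr)g(s)\,\rmd s.
\end{equation*}
The first term tends to $K(0)g(t)$. For the second, each quotient equals the average $\tfrac1h\int_0^hK'(t-s+\theta)g(s)\,\rmd\theta$. This is bounded by local norm-boundedness and converges pointwise in $s$, so dominated convergence gives the limit $(K'*g)(t)$.
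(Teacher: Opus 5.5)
Your proof is correct and is essentially the paper's argument. Both combine the fundamental theorem of calculus along the trajectories $K(\cdot)x_1$ with a Fubini swap over the triangle to obtain $(K*g)(t)=\int_0^t\bigl(K(0)g(\tau)+(K'*g)(\tau)\bigr)\,\rmd\tau$, and both get continuity of $K'*g$ from \cite[Proposition 1.3.4]{ABHN}; the paper just runs the computation starting from $\int_0^t(K'*g)$ rather than from $K*g$. Your uniform-boundedness argument for the joint continuity needed in Fubini supplies a detail the paper leaves implicit, and the difference-quotient fallback is not needed.
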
	
\begin{proof}
	Since $K'$ is strongly continuous and $g \in L^1_{\mathrm{loc}}(\mathbb{R}, \bX_1)$, \cite[Proposition 1.3.4]{ABHN} we infer $K' * g \in \mathcal{C}(\mathbb{R}, \bX_2)$. Moreover,
	\begin{align*}
		\int_0^t (K' * g)(s)\,\rmd s
		&= \int_0^t \int_0^s K'(s-\tau)\,g(\tau)\,\rmd\tau\,\rmd s = \int_0^t \int_\tau^t K'(s-\tau)\,g(\tau)\,\rmd s\,\rmd\tau\\
		&= \int_0^t K(s-\tau)\Big|_{s=\tau}^{s=t} \,g(\tau)\rmd\tau= \int_0^t K(t-\tau)\,g(\tau)\,\rmd\tau - K(0)\int_0^t g(\tau)\,\rmd\tau \\
		&= (K * g)(t) - K(0)\int_0^t g(\tau)\,\rmd\tau \; \text{for any}\; t \in \mathbb{R}.
	\end{align*}
	We immediately conclude that $K * g \in \mathcal{C}^1(\mathbb{R}, \bX_2)$ and $(K * g)' = K' * g + K(0)\,g$.
\end{proof}

\end{document}